\newcommand{\op}[1]{\ensuremath{\operatorname{#1}}}
\newcommand{\wt}[1]{\ensuremath{\widetilde{#1}}}
\newcommand{\wh}[1]{\ensuremath{\widehat{#1}}}
\newcommand{\ol}[1]{\ensuremath{\overline{#1}}}
\newcommand{\ul}[1]{\ensuremath{\underline{#1}}}
\newcommand{\mc}[1]{\ensuremath{\mathcal{#1}}}
\newcommand{\fg}{\ensuremath{\mathfrak{g}}}
\newcommand{\mf}[1]{\ensuremath{\mathfrak{#1}}}
\newcommand{\B}{\ensuremath{\mathbb{B}}}
\newcommand{\R}{\ensuremath{\mathbb{R}}}
\newcommand{\N}{\ensuremath{\mathbb{N}}}
\newcommand{\Z}{\ensuremath{\mathbb{Z}}}
\newcommand{\bS}{\ensuremath{\mathbb{S}}}
\newcommand{\id}{\ensuremath{\operatorname{id}}}
\newcommand{\pr}{\ensuremath{\operatorname{pr}}}
\newcommand{\Map}{\ensuremath{\operatorname{Map}}}
\newcommand{\Hom}{\ensuremath{\operatorname{Hom}}}
\newcommand{\im}{\ensuremath{\operatorname{im}}}
\newcommand{\tx}[1]{\ensuremath{\text{#1}}}
\newcommand{\per}{\ensuremath{\operatorname{per}}}
\newcommand{\se}{\ensuremath{\nobreak\subseteq\nobreak}}
\newcommand{\from}{\ensuremath{\nobreak\colon\nobreak}}
\renewcommand{\to}{\ensuremath{\nobreak\rightarrow\nobreak}}
\newenvironment{tabsection}{}{}
\newtheorem{definition}{Definition}[section]
\newtheorem{remark}[definition]{Remark}
\newtheorem{example}[definition]{Example}
\newtheorem*{proof}{Proof}
\newtheorem*{nntheorem}{Theorem}
\newtheorem{lemma}[definition]{Lemma}
\newtheorem{proposition}[definition]{Proposition}
\newtheorem{theorem}[definition]{Theorem}
\newtheorem{corollary}[definition]{Corollary}
\newcommand{\dgp}{\ensuremath{\op{\mathtt{d}_{\mathrm{gp}}}}}
\newcommand{\kC}{\ensuremath{C_{\boldsymbol{k}}}}
\newcommand{\ktimes}[1][]{\ensuremath{\mathop{\times_{\boldsymbol{k}}^{#1}}}}
\newcommand{\ptimes}[1][]{\ensuremath{\mathop{\times_{\boldsymbol{p}}^{#1}}}}
\newcommand{\mtimes}[1][]{\ensuremath{\mathop{\times_{\boldsymbol{m}}^{#1}}}}
\newcommand{\superscript}[1]{\ensuremath{^{\textrm{#1}}}}
\newcommand{\Sh}{\ensuremath{\op{Sh}}}
\newcommand{\cont}{\ensuremath{c}}
\newcommand{\sm}{\ensuremath{s}}
\newcommand{\loc}{\ensuremath{  \op{loc}}}
\newcommand{\bc}{\ensuremath{ bc}}
\newcommand{\locc}{\ensuremath{ \op{loc},\cont}}
\newcommand{\loctop}{\ensuremath{ \op{loc},\op{top}}}
\newcommand{\globtop}{\ensuremath{ \op{glob},\op{top}}}
\newcommand{\locs}{\ensuremath{ \op{loc},\sm}}
\newcommand{\globc}{\ensuremath{\op{glob},\cont}}
\newcommand{\globs}{\ensuremath{\op{glob},\sm}}
\newcommand{\simpc}{\ensuremath{\op{simp},\cont}}
\newcommand{\simps}{\ensuremath{\op{simp},\sm}}
\newcommand{\meas}{\ensuremath{\op{\mu}}}
\newcommand{\SM}{\ensuremath{\op{SM}}}
\newcommand{\Moore}{\ensuremath{\op{Moore}}}
\newcommand{\cat}[1]{\ensuremath{\boldsymbol{\op{#1}}}}
\newcommand{\cghaus}{\cat{kTop}}
\def\dashuline{\bgroup 
  \ifdim\ULdepth=\maxdimen  %
   \settodepth\ULdepth{(j}\advance\ULdepth.4pt\fi
  \markoverwith{\kern.1em
  \vtop{\kern\ULdepth \hrule width .2em}%
  \kern.1em}\ULon}
\begin{document}
\title{A Cocycle Model for Topological and Lie Group Cohomology}

\author{Friedrich Wagemann \\
     wagemann@math.univ-nantes.fr\\
\and
     Christoph Wockel\\
     christoph@wockel.eu}

\maketitle

\begin{abstract}
 We propose a unified framework in which the different constructions of
 cohomology groups for topological and Lie groups can all be
 treated on equal footings. In particular, we show that the cohomology
 of ``locally continuous'' cochains (respectively ``locally smooth'' in
 the case of Lie groups) fits into this framework, which provides an
 easily accessible cocycle model for topological and Lie group cohomology.
 We illustrate the use of this unified framework and the relation
 between the different models in various applications. This includes the
 construction of cohomology classes characterizing the string group and
 a direct connection to Lie algebra cohomology.
\end{abstract}

\begin{tabular}{rp{35em}}
\textbf{MSC:} & 22E41 (primary), 57T10, 20J06 (secondary)\\
\textbf{Keywords:} & cohomology for topological groups, cohomology for
Lie groups, abelian extension, crossed module, Lie algebra cohomology,
string group
\end{tabular}

\section*{Introduction}

\begin{tabsection}
 It is a common pattern in mathematics that things that are easy to define are
 hard to compute and things that are hard to define come with lots of machinery
 to compute them\footnote{Quote taken from a lecture by Janko Latschev.}. On
 the other hand, mathematics can be very enjoyable if these different
 definitions can be shown to yield isomorphic objects. In the present article
 we want to promote such a perspective towards topological group cohomology,
 along with its specialization to Lie group cohomology.
 
 It has become clear in the last decade that concretely accessible cocycle
 models for cohomology theories (understood in a broader sense) are as
 important as abstract constructions. Examples for this are differential
 cohomology theories (cocycle models come for instance from (bundle) gerbes, an
 important concept in topological and conformal field theory), elliptic
 cohomology (where cocycle models are yet conjectural but have nevertheless
 already been quite influential) and Chas-Sullivan's string topology operations
 (which are subject to certain well behaved representing cocycles). This
 article describes an easily accessible cocycle model for the more complicated
 to define cohomology theories of topological and Lie groups
 \cite{Segal70Cohomology-of-topological-groups,Wigner73Algebraic-cohomology-of-topological-groups,Deligne74Theorie-de-Hodge.-III,Brylinski00Differentiable-Cohomology-of-Gauge-Groups}.
 The cocycle model is a seemingly obscure mixture of (abstract) group
 cohomology, added in a continuity condition only around the identity. Its
 smooth analogue has been used in the context of Lie group cohomology and its
 relation to Lie algebra cohomology
 \cite{TuynmanWiegerinck87Central-extensions-and-physics,WeinsteinXu91Extensions-of-symplectic-groupoids-and-quantization,Neeb02Central-extensions-of-infinite-dimensional-Lie-groups,Neeb04Abelian-extensions-of-infinite-dimensional-Lie-groups,Neeb06Towards-a-Lie-theory-of-locally-convex-groups,Neeb07Non-abelian-extensions-of-infinite-dimensional-Lie-groups},
 which is where our original motivation stems from. The basic message will be
 that all the above concepts of topological and Lie group cohomology coincide
 for finite-dimensional Lie groups and coefficients modeled on quasi-complete
 locally convex spaces. Beyond finite-dimensional Lie groups still all
 continuous concepts agree.

 There is a simple notion of topological group cohomology for a topological
 group $G$ and a continuous $G$-module $A$. It is the cohomology of the complex
 of continuous cochains with respect to the usual group differential. This is
 what we call ``globally continuous'' group cohomology and denote it by
 $H^{n}_{\globc}(G,A)$. It cannot encode the topology of $G$ appropriately, for
 instance $H^{2}_{\globc}(G,A)$ only describes abelian extensions which are
 topologically trivial bundles. However, in case $G$ is contractible it will
 turn out that the more elaborate cohomology groups from above coincide with
 $H^{n}_{\globc}(G,A)$. In this sense, the deviation from the above cohomology
 groups from being the globally continuous ones measures the non-triviality of
 the topology of $G$. On the other hand, the comparison between
 $H^{n}_{\globc}(G,A)$ and the other cohomology theories for topologically
 trivial \emph{coefficients} $A$ will lead to a comparison theorem between
 the other cohomology theories. It is this circle of ideas that the present
 article is about.\\
 
 The paper is organized as follows. In the first section we review the
 construction and provide the basic facts of what we call locally continuous
 group cohomology $H^{n}_{\locc}(G,A)$ (respectively locally smooth cohomology
 $H^{n}_{\locs}(G,A)$ for $G$ a Lie group and $A$ a smooth $G$-module). Since
 it will become important in the sequel we highlight in particular that for
 loop contractible\footnote{$A$ is called \emph{loop-contractible} if there
 exists a contracting homotopy $\rho\from [0,1]\times A\to A$ such that
 $\rho_{t}\from A\to A$ is a group homomorphism for each $t\in [0,1]$.}
 coefficients these cohomology groups coincide with the globally continuous
 (respectively smooth) cohomology groups $H^{n}_{\globc}(G,A)$ (respectively
 $H^{n}_{\globs}(G,A)$). In the second section we then introduce what we call
 simplicial continuous cohomology $H^{n}_{\simpc}(G,A)$ and construct a
 comparison morphism $H^{n}_{\simpc}(G,A)\to H^{n}_{\locc}(G,A)$. The third
 section explains how simplicial cohomology may be computed in a way similar to
 computing sheaf cohomology via \v{C}ech cohomology (the fact that this gives
 indeed $H^{n}_{\simpc}(G,A)$ will have to wait until the next section).
 
 The first main point of this paper comes in Section
 \ref{sect:Comparison_Theorem}, where we give the following axiomatic
 characterization of what we call a cohomology theory for topological groups.
 \begin{nntheorem}[Comparison
  Theorem] Let $G$ be a compactly generated topological group and let
  $\cat{G-Mod}$ be the category of locally contractible $G$-modules. Then there
  exists, up to isomorphism, exactly one sequence of functors
  $(H^{n}\from\cat{G-Mod}\to\cat{Ab})_{n\in\N_{0}}$ admitting natural long
  exact sequences for short exact sequences in $\cat{G-Mod}$ such that
  \begin{enumerate}
   \item $H^{0}(A)=A^{G}$ is the invariants functor
   \item $H^{n}(A)=H^{n}_{\globc}(G,A)$ for contractible $A$.
  \end{enumerate}
 \end{nntheorem}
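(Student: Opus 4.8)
The plan is to prove the two halves of the statement — existence of at least one such sequence of functors, and its uniqueness up to isomorphism — more or less independently.

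\textbf{Existence.} The candidate is the locally continuous cohomology $H^{n}_{\locc}(G,-)$ recalled in Section I, so the job is to verify the three listed properties. That $H^{0}_{\locc}(G,A)=A^{G}$ is immediate from the cochain complex. For the natural long exact sequence attached to a short exact sequence $0\to A'\to A\to A''\to 0$ in $\cat{G-Mod}$, the point is that such a sequence admits a local continuous section of $A\to A''$ near the identity — this is part of what exactness means here and is underwritten by local contractibility of the modules together with compact generation of $G$ — so that the locally continuous cochains fit into a short exact sequence of cochain complexes $0\to C^{\bullet}_{\locc}(G,A')\to C^{\bullet}_{\locc}(G,A)\to C^{\bullet}_{\locc}(G,A'')\to 0$, whose long exact cohomology sequence is the desired one. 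Finally, property (ii), the identification $H^{n}_{\locc}(G,A)=H^{n}_{\globc}(G,A)$ for (loop-)contractible $A$, is exactly the statement highlighted in Section I: for such coefficients the inclusion of globally into locally continuous cochains induces an isomorphism in cohomology.

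\textbf{Uniqueness.} Let $(H^{n},\delta)_{n}$ be any sequence of functors with natural long exact sequences satisfying (i) and (ii). I want a natural isomorphism $H^{n}\cong H^{n}_{\locc}$. The tool is dimension shifting along a functorial resolution: for $A\in\cat{G-Mod}$, use the bar-type (``coinduced'') resolution $0\to A\to C(G,A)\to C(G^{2},A)\to\cdots$ underlying $H^{n}_{\simpc}$, with the diagonal-type $G$-action; compact generation of $G$ keeps these modules inside $\cat{G-Mod}$ and makes the cochain differential behave. Splicing the long exact sequences of $H^{\bullet}$ along this resolution yields a spectral sequence with first page $E_{1}^{p,q}=H^{q}(C(G^{p+1},A))$ (and similarly a spectral sequence for $H^{\bullet}_{\locc}$) abutting to $H^{p+q}(A)$. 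The crux is then the claim that \emph{every} theory obeying (i) and (ii) takes the same value on the modules $C(G^{p+1},A)$ — indeed that $H^{q}(C(G^{p+1},A))=0$ for $q\ge 1$, a Shapiro-type vanishing reflecting that these modules are coinduced from the trivial group. Granting this, both spectral sequences have the same $E_{1}$-page, and the same differentials — the $d_{1}$ being induced by the simplicial face maps $C(G^{p+1},A)\to C(G^{p+2},A)$ and the higher differentials being assembled from these together with the connecting maps $\delta$, all of which are forced by the axioms — hence isomorphic abutments; tracking this isomorphism back and checking naturality and independence of the chosen resolution completes the argument.

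\textbf{Where the difficulty sits.} The one genuinely nontrivial ingredient is the Shapiro-type vanishing for an \emph{arbitrary} theory satisfying the axioms. One cannot read it off from property (ii): the coinduced modules $C(G^{p+1},A)$ are not contractible once $A$ is not — for instance $C(\R,\bS^{1})$, the coinduced module over $A=\bS^{1}$ with trivial action, is homotopy equivalent to $\bS^{1}$ — and in any case contractible modules are not $H^{\bullet}_{\globc}$-acyclic. What must be exploited instead is the specific structure of the coinduced modules: the bar complex with coinduced coefficients carries an explicit contracting homotopy, which gives the vanishing for $H^{\bullet}_{\globc}$, and this has to be promoted to a statement about any $\delta$-functor extending $H^{\bullet}_{\globc}$ on contractibles — either by extracting a Shapiro lemma directly from the long exact sequences (reducing $G$ to the trivial group) or, equivalently, by re-resolving each $C(G^{p+1},A)$ and controlling the resulting double complex. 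This is the step where compact generation of $G$, local contractibility of the coefficients, and the comparison results between the models from Sections I--III all enter, and I expect it to be the bulk of the proof.
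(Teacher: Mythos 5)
You have correctly located the crux of the uniqueness argument, but you have not closed it, and the route you sketch for closing it does not work. An abstract $\delta$-functor satisfying only (i) and (ii) carries no information about its values on non-contractible modules beyond what dimension shifting along \emph{already-known-to-be-acyclic} objects provides; so a ``Shapiro-type vanishing'' $H^{q}(C(G^{p+1},A))=0$ for arbitrary $A$ cannot be ``extracted from the long exact sequences'' without presupposing a supply of acyclic resolutions, which is exactly what is at stake. (As you note yourself, $C(G^{p+1},A)$ is not contractible for non-contractible $A$, so axiom (ii) says nothing about it.) The paper resolves this with the Segal--Mitchison functor $E_{G}(A)=\kC(G,EA)$: one first embeds $A$ into the contractible group $EA$ and only then coinduces. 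The resulting module is simultaneously contractible -- so axiom (ii) identifies $H^{n}(E_{G}(A))$ with $H^{n}_{\globc}(G,E_{G}(A))$ -- and a soft module in the sense of Definition \ref{def:segalsCohomology} -- so the latter vanishes for $n\geq 1$ by Proposition \ref{prop:soft_modules_are_acyclic_for_globcont}. Dimension shifting along the short exact sequences $A\to E_{G}(A)\to B_{G}(A)$ (Lemma \ref{lem:global_section_for_BGA}) then runs the Buchsbaum-type induction of Theorem \ref{thm:moores_comparison_theorem}; no spectral sequence is needed, and the compatibility of the resulting isomorphism with \emph{all} connecting maps (not just those of the distinguished resolution) requires the additional diagrams \eqref{eqn:moores_comparison_theorem}, which you do not address at all.

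The existence half also has a gap. Property (ii) for $H^{n}_{\locc}(G,\cdot)$ is available from Section \ref{sect:locally_smooth_cohomology} only for \emph{loop}-contractible $A$ (Proposition \ref{prop:loc=glob_for_contractible_coefficients}), and under the extra hypothesis that the product topologies on the $G^{n}$ are compactly generated; for general contractible $A$ the identification $H^{n}_{\locc}(G,A)\cong H^{n}_{\globc}(G,A)$ is in this paper a \emph{consequence} of the Comparison Theorem, not an input to it. The paper therefore takes $H^{n}_{\SM}(G,\cdot)$ as the existence witness, quoting its $\delta$-functoriality and the isomorphism $H^{n}_{\SM}(G,A)\cong H^{n}_{\globc}(G,A)$ for contractible $A$ from Segal's work.
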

 There is one other way of defining cohomology groups $H^{n}_{\SM}(G,A)$ which
 is due to Segal and Mitchison \cite{Segal70Cohomology-of-topological-groups}.
 This construction will turn out to be the one which is best suited for
 establishing the Comparison Theorem. However, we then show that under some
 mild assumptions (guaranteed for instance by the metrizability of $G$) all
 cohomology theories that we had so far (except the globally continuous) obey
 these axioms. The rest of the section in then devoted to showing that
 almost all other concepts of cohomology theories for topological groups also
 fit into this scheme. This includes the ones considered by Flach in
 \cite{Flach08Cohomology-of-topological-groups-with-applications-to-the-Weil-group},
 the measurable cohomology of Moore from
 \cite{Moore76Group-extensions-and-cohomology-for-locally-compact-groups.-III}
 and the mixture of measurable and locally continuous cohomology of Khedekar and
 Rajan from \cite{KhedekarRajan12On-cohomology-theory-for-topological-groups}.
 The only exception that we know not fitting into this scheme is the continuous
 bounded cohomology (see
 \cite{Monod01Continuous-bounded-cohomology-of-locally-compact-groups,Monod06An-invitation-to-bounded-cohomology}),
 which differs from the above concepts by design.
 
 The second main point comes with Section \ref{sect:examples}, where we exploit
 the interplay between the different constructions. For instance, we construct
 a cohomology class that deserves to be named string class, and we construct
 topological crossed modules associated to third cohomology classes. Moreover,
 we show how to extract the purely topological information contained in an
 element in $H^{n}_{\locc}(G,A)$ by relating an explicit formula for this with
 a structure map for the spectral sequence associated to $H^{n}_{\simpc}(G,A)$.
 Furthermore, $H^{n}_{\locs}(G,A)$ maps naturally to Lie algebra cohomology and
 we use the previous result to identify situations where this map becomes an
 isomorphism. Almost none of the consequences mentioned here could be drawn
 from one model on its own, so this demonstrates the strength of the unified
 framework.
 
 In the last two sections, which are independent from the rest of the paper, we
 provide some details on the constructions that we use.
\end{tabsection}

\section*{Acknowledgements}

\begin{tabsection}
 Major parts of the research on this article were done during research
 visits of CW in Nantes (supported by the program MatPyL ``Math\'ematiques en
 Pays de la Loire'') and of FW in Hamburg (supported by the University of
 Hamburg). Both authors want to thank Chris Schommer-Pries for various
 enlightening discussions and conversations, which influenced the paper
 manifestly (see \cite{Pries09Smooth-group-cohomology} for a bunch of
 ideas pointing towards the correct direction). Thanks go also to Rolf Farnsteiner for pointing
 out the usefulness of the language of $\delta$-functors.
\end{tabsection}

\section*{Conventions}

\begin{tabsection}
 Since we will be working in the two different regimes of compactly generated
 Hausdorff spaces and infinite-dimensional Lie groups we have to choose the
 setting with some care.
 
 Unless further specified, $G$ will throughout be a group in the category
 $\cghaus$ of $k$-spaces (compactly generated Hausdorff\footnote{More
 generally, our results remain valid if one only considers weak Hausdorff
 spaces.} spaces i.e., a subset is closed if and only if its intersection with
 each compact set is closed, cf.\
 \cite{Whitehead78Elements-of-homotopy-theory,Mac-Lane98Categories-for-the-working-mathematician}
 or \cite{Hovey99Model-categories}) and $A$ will be a (locally contractible)
 $G$-module in this category\footnote{From the beginning of Section
 \ref{sect:Comparison_Theorem} we will need that $A$ is locally
 contractible.}. This means that the multiplication (respectively action) map
 is continuous with respect to the \emph{compactly generated topology} on the
 product. Note that the topology on the product may be finer than the product
 topology, so this may not be a topological group (respectively module) as
 defined below. To avoid confusion, we denote the compactly generated product
 by $X\ktimes Y$ (and $X^{\ktimes[n]}$ for the $n$-fold product) and the
 compactly generated topology on $C(X,Y)$ by $\kC(X,Y)$ for $X,Y$ in $\cghaus$.
 
 If $X$ and $Y$ are arbitrary topological spaces, then we refer to the product
 topology by $X\ptimes Y$ (and $X^{\ptimes[n]}$). By topological group
 (respectively topological module) we shall mean a group (respectively module)
 in this category, i.e., the multiplication (respectively action) is continuous
 for the product topology.
 
 Frequently we will assume, in addition, that $G$ is a (possibly infinite
 dimensional) Lie group and that $A$ is a smooth $G$-module\footnote{This
 assumption seems to be quite restrictive for either side, but it is the
 natural playground on which homotopy theory and (infinite-dimensional) Lie
 theory interacts.}. By this we mean that $G$ is a group in the category
 $\cat{Man}$ of manifolds, modeled on locally convex vector spaces (see
 \cite{Hamilton82The-inverse-function-theorem-of-Nash-and-Moser,Milnor84Remarks-on-infinite-dimensional-Lie-groups,Neeb06Towards-a-Lie-theory-of-locally-convex-groups}
 or \cite{GlocknerNeeb13Infinite-dimensional-Lie-groups} for the precise
 setting) and $A$ is a $G$-module in this category. This means in particular
 that the multiplication (respectively action) map is smooth for the product
 smooth structure. To avoid confusion we refer to the product in $\cat{Man}$ by
 $X\mtimes Y$ (and $X^{\mtimes[n]}$).
 
 Note that we set things up in such a way that the smooth setting is a
 specialization of the topological one, which is in turn a specialization of
 the compactly generated one. This is true since smooth maps are in particular
 continuous and since the product topology is coarser than the compactly
 generated one. Note also that all topological properties on $G$ (except the
 existence of good covers) that we will assume are satisfied for metrizable $G$
 and all smoothness properties are satisfied for metrizable and smoothly
 paracompact $G$. The existence of good covers (as well as metrizability and
 smooth paracompactness) is in turn satisfied for large classes of
 infinite-dimensional Lie groups like mapping groups or diffeomorphism groups
 \cite{KrieglMichor97The-Convenient-Setting-of-Global-Analysis,SperaWurzbacher10Good-coverings-for-section-spaces-of-fibre-bundles}.
 
 We shall sometimes have to impose topological conditions on the topological
 spaces $|G|$ and $|A|$ underlying $G$ and $A$. We will do so by leisurely
 adding the corresponding adjective. For instance, a contractible $G$-module
 $A$ is a $G$-module such that $|A|$ is contractible.
\end{tabsection}

\section{Locally continuous and locally smooth cohomology}
\label{sect:locally_smooth_cohomology}

\begin{tabsection}
 One of our main objectives will be the relation of locally continuous
 and locally smooth cohomology for topological or Lie groups to other
 concepts of topological group cohomology. In this section, we recall
 the basic notions and properties of locally continuous and locally
 smooth cohomology. These concepts already appear in the work of
 Tuynman-Wiegerinck
 \cite{TuynmanWiegerinck87Central-extensions-and-physics}, of
 Weinstein-Xu
 \cite{WeinsteinXu91Extensions-of-symplectic-groupoids-and-quantization}
 and have been popularized recently by Neeb
 \cite{Neeb02Central-extensions-of-infinite-dimensional-Lie-groups,Neeb04Abelian-extensions-of-infinite-dimensional-Lie-groups,Neeb06Towards-a-Lie-theory-of-locally-convex-groups,Neeb07Non-abelian-extensions-of-infinite-dimensional-Lie-groups}.
 There has also appeared a slight variation of this by measurable locally smooth
 cohomology in
 \cite{KhedekarRajan12On-cohomology-theory-for-topological-groups}.
\end{tabsection}

\begin{definition}\label{def:locsm}
 For any pointed topological space $(X,x)$ 	and abelian topological
 group $A$ we set
 \begin{align*}
  C_{\loc}(X,A):=\{f\from X\to A\mid f\text{ is continuous on some neighborhood of }x\}.
 \end{align*}
 If, moreover, $X$ is a smooth manifold and $A$ a Lie group, then we set
 \begin{align*}
  C_{\loc}^{\infty}(X,A):=\{f\from X\to A\mid f\text{ is smooth on some neighborhood of }x\}.
 \end{align*}
 With this we set $C^{n}_{\locc}(G,A):=C_{\loc}(G^{\ktimes[n]},A)$,
 where we choose the identity in $G^{n}$ as base-point. We call these
 functions (by some abuse of language) \emph{locally continuous group
 cochains}. The ordinary group differential
 \begin{align}
  (\dgp f)(g_0,\ldots,g_n)&=g_0.f(g_1,\ldots,g_n)\,+\notag\\  
  &+\sum_{j=1}^n(-1)^jf(g_0,\ldots,g_{j-1}g_j,\ldots,g_n)\,+\,(-1)^{n+1}
  f(g_0,\ldots,g_{n-1})\label{eqn:group_differential}
 \end{align}
 turns $(C_{\locc}^{n}(G,A),\dgp)$ into a cochain complex. Its
 cohomology will be denoted by $H^{n}_{\locc}(G,A)$ and be called the
 \emph{locally continuous group cohomology}.

 If $G$ is a Lie group and $A$ a smooth $G$-module, then we also
 consider the sub complex
 $C^{n}_{\locs}(G,A):=C_{\loc}^{\infty}(G^{\mtimes[n]},A)$ and call its
 cohomology $H^{n}_{\locs}(G,A)$ the \emph{locally smooth group
 cohomology}.
\end{definition}

\begin{tabsection}
 These two concepts should not be confused with the continuous
 \emph{local cohomology} (respectively the smooth \emph{local
 cohomology}) of $G$, which is given by the complex of germs of
 continuous (respectively smooth) $A$-valued functions at the identity
 (which is isomorphic to the Lie algebra cohomology for a
 finite-dimensional Lie group $G$, see Remark
 \ref{rem:connection_to_Lie_algebra_cohomology}). It is crucial that the
 cocycles in the locally continuous cohomology actually are extensions
 of locally defined cocycles and this extension is extra information
 they come along with. Note for instance, that not all locally defined
 homomorphisms of a topological groups extend to global homomorphisms
 and that not all locally defined 2-cocycles extend to globally defined
 cocycles
 \cite{Smith51The-complex-of-a-group-relative-to-a-set-of-generators.-I,Smith51The-complex-of-a-group-relative-to-a-set-of-generators.-II,Est62Local-and-global-groups.-I,Est62Local-and-global-groups.-II}.
\end{tabsection}

\begin{remark}\label{rem:long_exact_coefficient_sequence}
 (cf.\ \cite[App.\
 E]{Neeb04Abelian-extensions-of-infinite-dimensional-Lie-groups}) Let
 \begin{equation}
  A\xrightarrow{\alpha}B\xrightarrow{\beta}C \label{eqn:shot_exact_coefficient_sequence}
 \end{equation}
 be a \emph{short exact sequence of $G$-modules in $\cghaus$},
 i.e., the underlying sequence of abstract abelian groups is exact and
 $\beta$ (or equivalently $\alpha$) has a continuous local section. The
 latter is equivalent to demanding that
 \eqref{eqn:shot_exact_coefficient_sequence} is a locally trivial
 principal $A$-bundle. Then composition with $\alpha$ and $\beta$
 induces a sequence
 \begin{equation}\label{eqn:short_exact_sequence_of_complexes}
  C^{n}_{\locc}(G,A)\xrightarrow{\alpha_{*}} C^{n}_{\locc}(G,B)\xrightarrow{\beta _{*}}
  C^{n}_{\locc}(G,C),
 \end{equation}
 which we claim to be a short exact sequence of chain complexes.
 Injectivity of $\alpha_{*}$ and $\im(\alpha_{*})\se\ker(\beta_{*})$ is
 clear. Since a local trivialization of the bundle induces a continuous
 left inverse to $\alpha$ on some neighborhood of $\ker(\beta)$, we also
 have $\ker(\beta_{*})\se \im(\alpha_{*})$. To see that $\beta_{*}$ is
 surjective, we choose a local continuous section $\sigma\from U\to B$
 which we extend to a global (but not necessarily continuous) section
 $\sigma\from C\to B$. Thus if $f\in C^{n}_{\locc}(G,C)$, then
 $  \sigma \circ f \in C^{n}_{\locc}(G,B)$ with
 $\beta_{*}(\sigma \circ f)=\beta \op{\circ} \sigma \op{\circ} f=f$ and
 $\beta_{*}$ is surjective.
 Since \eqref{eqn:short_exact_sequence_of_complexes} is exact, it induces
 a long exact sequence
 \begin{equation}\label{eqn:long_exact_coefficient_sequence}
  \cdots\to H^{n-1}_{\locc}(G,C) \to H^{n}_{\locc}(G,A)\to H^{n}_{\locc}(G,B) \to H^{n}_{\locc}(G,C) \to H^{n+1}_{\locc}(G,A) \to\cdots
 \end{equation}
 in the locally continuous cohomology.

 If, in addition, $G$ is a Lie group and
 \eqref{eqn:shot_exact_coefficient_sequence} is a \emph{short exact
 sequence of smooth $G$-modules}, i.e., a smooth locally trivial
 principal $A$-bundle, then the same argument shows that $\alpha_{*}$
 and $\beta_{*}$ induce a long exact sequence
 \begin{equation*}
  \cdots\to H^{n-1}_{\locs}(G,C) \to H^{n}_{\locs}(G,A)\to H^{n}_{\locs}(G,B) \to H^{n}_{\locs}(G,C) \to H^{n+1}_{\locs}(G,A) \to\cdots
 \end{equation*}
 in the locally smooth cohomology.
\end{remark}

\begin{remark}
 The low-dimensional cohomology groups $H^{0}_{\locc}(G,A)$,
 $H^{1}_{\locc}(G,A)$ and $H^{2}_{\locc}(G,A)$ have the usual interpretations.
 $H^{0}_{\locc}(G,A)=A^{G}$ are the $G$-invariants of $A$, $H^1_{\locc}(G,A)$
 (respectively $H^{1}_{\locs}(G,A)$) is the group of equivalence classes of
 continuous (respectively smooth) crossed homomorphisms modulo principal
 crossed homomorphisms. If $G$ is connected\footnote{The requirement on $G$
 being connected is a posteriori redundant, since the isomorphism also follows
 from the comparison result in Section \ref{sect:Comparison_Theorem} and
 \cite[\S 4]{Segal70Cohomology-of-topological-groups}. However, the argument
 given in \cite[Sect.\
 2]{Neeb04Abelian-extensions-of-infinite-dimensional-Lie-groups} requires
 connectedness. It would be interesting to have an argument similar to the one
 from \cite[Sect.\
 2]{Neeb04Abelian-extensions-of-infinite-dimensional-Lie-groups} (i.e., using
 only locally continuous group cocycles) also in the non-connected case (see
 also the concept of a \emph{strongly smooth outer action} in \cite[Sect.\
 1.2]{Neeb07Non-abelian-extensions-of-infinite-dimensional-Lie-groups}).}, then
 $H^{2}_{\locc}(G,A)$ (respectively $H^{2}_{\locs}(G,A)$) is isomorphic to the
 group of equivalence classes of abelian extensions
 \begin{equation}\label{eqn:locally_trivial_abelian_extension}
  A\to \wh{G}\to G
 \end{equation}
 which are continuous (respectively smooth) locally trivial principal
 $A$-bundles over $G$ \cite[Sect.\
 2]{Neeb04Abelian-extensions-of-infinite-dimensional-Lie-groups}.
\end{remark}

\begin{remark}\label{rem:globally_continuous_cohomology}
 The cohomology groups $H^{n}_{\locc}(G,A)$ and $H^{n}_{\locs}(G,A)$ are
 variations of the \emph{globally continuous} cohomology
 groups $H^{n}_{\globc}(G,A)$ and \emph{globally smooth} cohomology
 groups $H^{n}_{\globs}(G,A)$, which are the cohomology groups of the chain
 complexes
 \begin{equation*}
  C^{n}_{\globc}(G,A):=C(G^{\ktimes[n]},A)
  \quad\text{ and }\quad
  C^{n}_{\globs}(G,A):=C^{\infty}(G^{\mtimes[n]},A),
 \end{equation*}
 endowed with the differential \eqref{eqn:group_differential}. We
 obviously have
 \begin{equation*}
  H^{0}_{\locc}(G,A)=H^{0}_{\globc}(G,A)\quad\text{ and }\quad
  H^{0}_{\locs}(G,A)=H^{0}_{\globs}(G,A).
 \end{equation*}
 Since crossed homomorphisms are continuous (respectively smooth) if and
 only if they are so on some identity neighborhood (see for example
 \cite[Lemma III.1]
 {Neeb04Abelian-extensions-of-infinite-dimensional-Lie-groups}), we also have
 \begin{equation*}
  H^{1}_{\locc}(G,A)=H^{1}_{\globc}(G,A)\quad\text{ and }\quad
  H^{1}_{\locs}(G,A)=H^{1}_{\globs}(G,A).
 \end{equation*}
 Moreover, the argument from Remark
 \ref{rem:long_exact_coefficient_sequence} also shows that we have a
 long exact sequence
 \begin{equation*}
  \cdots\to H^{n-1}_{\globc}(G,C) \to H^{n}_{\globc}(G,A)\to H^{n}_{\globc}(G,B) \to H^{n}_{\globc}(G,C) \to H^{n+1}_{\globc}(G,A) \to\cdots
 \end{equation*}
 if the exact sequence $A\xrightarrow{\alpha} B\xrightarrow{\beta} C$
 has a global continuous section (and respectively for the globally
 smooth cohomology if $A\xrightarrow{\alpha} B\xrightarrow{\beta} C$ has
 a global smooth section).

 Now assume that $A$ is contractible (respectively smoothly
 contractible) and that $G$ is connected and paracompact (respectively smoothly
 paracompact). In this case, the bundle
 \eqref{eqn:locally_trivial_abelian_extension} has a global continuous
 (respectively smooth) section and thus the extension
 \eqref{eqn:locally_trivial_abelian_extension} has a representative in
 $H^{2}_{\globc}(G,A)$ (respectively $H^{2}_{\globs}(G,A)$), cf.\
 \cite[Prop.\
 6.2]{Neeb04Abelian-extensions-of-infinite-dimensional-Lie-groups}.
 Moreover, the argument in \cite[Prop.\
 6.2]{Neeb04Abelian-extensions-of-infinite-dimensional-Lie-groups} also
 shows that two extensions of the form
 \eqref{eqn:locally_trivial_abelian_extension} are in this case
 equivalent if and only if the representing globally continuous
 (respectively smooth) cocycles differ by a globally continuous
 (respectively smooth) coboundary, and thus the canonical homomorphisms
 \begin{equation*}
  H^{2}_{\globc}(G,A)\to H^{2}_{\locc}(G,A) \quad\text{ and }\quad
  H^{2}_{\globs}(G,A)\to H^{2}_{\locs}(G,A)
 \end{equation*}
 are isomorphisms in this case.
\end{remark}

\begin{tabsection}
 It will be crucial in the following that the latter observation
 also holds for a large
 class of contractible coefficients in arbitrary dimension (and in the
 topological case also for not necessarily paracompact $G$). For this,
 recall that $A$ is called \emph{loop-contractible} if there exists a
 contracting homotopy $\rho\from [0,1]\times A\to A$ such that
 $\rho_{t}\from A\to A$ is a group homomorphism for each $t\in [0,1]$.
 If $A$ is a Lie group, then it is called \emph{smoothly
 loop-contractible} if $\rho$ is, in addition, smooth. In particular,
 vector spaces are smoothly loop-contractible, but in the topological
 case there exist more elaborate and important examples (see Section
 \ref{sect:Comparison_Theorem}).
\end{tabsection}

\begin{proposition}\label{prop:loc=glob_for_contractible_coefficients}
 If $A$ is loop-contractible, and the product topology on all $G^{n}$ is
 compactly generated, then the inclusion
 $C^{n}_{\globc}(G,A)\hookrightarrow C^{n}_{\locc}(G,A)$ induces an
 isomorphism $H^{n}_{\globc}(G,A)\cong H^{n}_{\locc}(G,A)$.

 If $G$ is a Lie group such that all $G^{\mtimes[n]}$ are smoothly
 paracompact and $A$ is a smooth $G$-module which is smoothly
 loop-contractible, then
 $C^{n}_{\globs}(G,A)\hookrightarrow C^{n}_{\locs}(G,A)$ induces an
 isomorphism $H^{n}_{\globs}(G,A)\cong H^{n}_{\locs}(G,A)$.
\end{proposition}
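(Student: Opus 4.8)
The plan is to construct an explicit contracting homotopy on the quotient complex $C^{n}_{\locc}(G,A)/C^{n}_{\globc}(G,A)$, thereby showing this complex is acyclic, which by the long exact sequence in cohomology gives the desired isomorphism. The loop-contractibility of $A$ is exactly what makes such a homotopy available: given the contracting homotopy $\rho\from[0,1]\times A\to A$ with each $\rho_{t}$ a group homomorphism, $\rho_{0}=\id_{A}$ and $\rho_{1}=0$ (or the reverse), I would first observe that for a locally continuous cochain $f\in C^{n}_{\locc}(G,A)$ the composite $\rho_{t}\circ f$ is again locally continuous, that $\rho_{0}\circ f=f$, and that $\rho_{1}\circ f=0$, and crucially that because $\rho_{t}$ is a homomorphism commuting suitably with the $G$-action (one should check or assume $\rho$ is $G$-equivariant, which holds in the relevant examples), the assignment $f\mapsto\rho_{t}\circ f$ is a \emph{chain map} for every fixed $t$. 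This already shows that the identity of $C^{\bullet}_{\locc}(G,A)$ is chain homotopic-by-a-path to the zero map, but one needs this at the level of the relative complex and needs an actual algebraic homotopy, not merely a homotopy through chain maps.

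The key step, then, is to integrate the path of chain maps into a chain homotopy. Writing $\beta_{t}\from C^{n}_{\locc}(G,A)\to C^{n}_{\locc}(G,A)$, $\beta_{t}(f)=\rho_{t}\circ f$, one differentiates (or, more robustly, uses a simplicial/telescoping argument if $A$ is not assumed to have a differentiable structure in the topological case) to produce operators $h^{n}\from C^{n}_{\locc}(G,A)\to C^{n-1}_{\locc}(G,A)$ with $\dgp h + h\dgp = \beta_{0}-\beta_{1} = \id$. Here the standard move is to use the cone construction: the homotopy $\rho$ gives, for each cochain $f$, a cochain $\widetilde{f}$ on $[0,1]\times G^{\ktimes[n]}$, and integration over the interval (in the smooth case) or a subdivision-and-comparison argument (in the topological case, using that $[0,1]$ is an interval and the evaluation maps at the endpoints) produces the contracting operator. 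Since $\rho_{0}\circ f=f$ automatically lies in $C^{n}_{\globc}(G,A)$ when $f$ does, and the whole construction is natural, these operators descend to the quotient complex $C^{n}_{\locc}(G,A)/C^{n}_{\globc}(G,A)$, where they witness that the identity equals zero up to homotopy — hence the quotient is acyclic. The hypothesis that the product topology on each $G^{n}$ is compactly generated (so that $G^{\ktimes[n]}=G^{\ptimes[n]}$ and cochains on it behave well) is used precisely to ensure the interval-parametrized families assemble into honest cochains and that local continuity is preserved.

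With acyclicity of the quotient in hand, I would invoke the long exact cohomology sequence associated to the short exact sequence of complexes
\begin{equation*}
 0\to C^{\bullet}_{\globc}(G,A)\hookrightarrow C^{\bullet}_{\locc}(G,A)\to C^{\bullet}_{\locc}(G,A)/C^{\bullet}_{\globc}(G,A)\to 0,
\end{equation*}
noting this is exact by construction. Since the cohomology of the third term vanishes in all degrees, the connecting maps are zero and the inclusion induces an isomorphism $H^{n}_{\globc}(G,A)\cong H^{n}_{\locc}(G,A)$ for all $n$. The smooth case is entirely parallel: one replaces $C_{\loc}$ by $C_{\loc}^{\infty}$, uses smooth loop-contractibility so that $\rho$ and all $\rho_{t}$ are smooth (hence $\rho_{t}\circ f$ is smooth wherever $f$ is), and uses that each $G^{\mtimes[n]}$ is smoothly paracompact so that the integration over $[0,1]$ producing the homotopy operator stays within smooth cochains; the smooth paracompactness is what plays the role of "compactly generated product" in guaranteeing that the parametrized smooth families integrate to smooth cochains.

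The main obstacle I anticipate is making the passage from "path of chain maps" to "algebraic chain homotopy" rigorous in the purely topological setting, where one cannot literally differentiate in $t$: one must either exhibit an explicit formula (in the style of the classical homotopy formula for simplicial or de Rham complexes, using the loop-homotopy $\rho$ to define $h^{n}(f)(g_{1},\ldots,g_{n-1})$ as a suitable combination of values of $f$ composed with $\rho_{t}$ at finitely many parameters, or as a genuine integral if $A$ is, say, quasi-complete locally convex) and then verify both that it lands in locally continuous cochains and that $\dgp h+h\dgp=\id$ on the quotient. A secondary subtlety is the compatibility of $\rho$ with the $G$-module structure — one should record that for the relevant loop-contractible modules the contracting homotopy can be taken $G$-equivariant, or else restrict attention to trivial modules and bootstrap, so that $\beta_{t}$ genuinely commutes with $\dgp$ including the $g_{0}.(-)$ term in \eqref{eqn:group_differential}.
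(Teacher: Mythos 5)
The central step of your argument --- producing operators $h^{n}\from C^{n}_{\locc}(G,A)\to C^{n-1}_{\locc}(G,A)$ with $\dgp h+h\dgp=\beta_{0}-\beta_{1}=\id$ by ``integrating'' the path of chain maps $\beta_{t}(f)=\rho_{t}\op{\circ}f$ --- is not merely a technical obstacle to be smoothed over; it is impossible. Such an $h$ would exhibit the \emph{entire} complex $C^{\bullet}_{\locc}(G,A)$ as acyclic, since nothing in the construction refers to the subcomplex $C^{\bullet}_{\globc}(G,A)$ or to the quotient. This contradicts $H^{0}_{\locc}(G,A)=A^{G}$, and more pointedly the example $G=\Z$ (discrete, so all hypotheses hold and locally continuous, globally continuous and abstract cochains all coincide) with $A=\R$ as a trivial module, where $A$ is loop-contractible via $\rho_{t}(a)=(1-t)a$ but $H^{1}(G,A)=\Hom(\Z,\R)=\R\neq 0$. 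The underlying error is the belief that a path of chain maps from $\id$ to $0$ integrates to a chain homotopy: Cartan-type formulas of the kind $\tfrac{d}{dt}\beta_{t}=d\op{\circ}i_{t}+i_{t}\op{\circ}d$ are special to de Rham and singular complexes, where the homotopy parameter can be traded for a simplex coordinate. A homotopy on the coefficients alone cannot lower the cochain degree (it does not change the number of $G$-variables), so no degree $(-1)$ operator can come out of $\rho$ without combining it with additional structure on the $G$-side. Consequently the claimed acyclicity of the quotient complex, and with it the whole proof, does not follow.

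What the argument is missing is precisely the point where local continuity enters. The proof the paper relies on (Fuchssteiner--Wockel, Prop.\ III.6 and IV.6 of the cited preprint) uses the freedom to redefine a locally continuous cochain arbitrarily away from the identity: given $f$ continuous on an identity neighborhood, one uses the homomorphism property of each $\rho_{t}$ together with translation by group elements to build explicit correction coboundaries that enlarge the domain of continuity of a cocycle step by step, eventually replacing it by a globally continuous representative (and similarly for coboundaries, giving injectivity). This is an argument about the interplay between the neighborhood filtration on $G^{n}$ and the group multiplication, not a contraction of the coefficient module, and it is entirely absent from your proposal. A secondary point: even the assertion that each $\beta_{t}$ is a chain map requires $\rho_{t}$ to be $G$-equivariant, which is not part of the definition of loop-contractibility and is not assumed in the proposition; your parenthetical ``one should check or assume'' flags this but does not resolve it, and the bootstrap via trivial modules you suggest is not available at the level of generality of the statement.
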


\begin{proof}
 This is \cite[Prop.\ III.6, Prop.\
 IV.6]{FuchssteinerWockel11Topological-Group-Cohomology-with-Loop-Contractible-Coefficients}.
\end{proof}

In the
case of discrete $A$ we note that there is no difference
between the locally continuous and locally smooth cohomology groups.
This is immediate since continuous and smooth maps
into discrete spaces are both the same thing as constant maps on connected
components.

\begin{lemma}
 If $G$ is a Lie group and $A$ is a discrete $G$-module, then the
 inclusion $C_{\locs}^{n}(G,A)\hookrightarrow C_{\locc}^{n}(G,A)$
 induces an isomorphism in cohomology
 $H_{\locs}^{n}(G,A)\to H_{\locc}^{n}(G,A)$.
\end{lemma}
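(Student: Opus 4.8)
The plan is to show that for a discrete coefficient module the inclusion is in fact the identity of two coinciding cochain complexes, so that the isomorphism in cohomology is immediate. The only work is to unwind the definitions and to check one small topological point.

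First I would record the two elementary facts about discrete targets. If $A$ is discrete, then for any topological space $X$ a map $f\from X\to A$ is continuous if and only if it is locally constant; and if $X$ is a manifold modeled on a locally convex space, then $f\from X\to A$ is smooth if and only if it is locally constant, since locally constant maps are visibly smooth and smooth maps are in particular continuous. Combining these, for a pointed manifold $(X,x)$ the conditions ``$f$ is continuous on a neighborhood of $x$'' and ``$f$ is smooth on a neighborhood of $x$'' both amount to the single statement that $f$ is constant on some connected open neighborhood of $x$ (shrink any constancy neighborhood to the component of $x$, which is open by local connectedness of manifolds).

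Applying this with $X$ an $n$-fold power of $G$ and base point the identity, the set $C^{n}_{\locc}(G,A)=C_{\loc}(G^{\ktimes[n]},A)$ consists exactly of those $f\from G^{n}\to A$ that are constant on a neighborhood of $e$ in $G^{\ktimes[n]}$, and $C^{n}_{\locs}(G,A)=C^{\infty}_{\loc}(G^{\mtimes[n]},A)$ consists exactly of those $f$ constant on a neighborhood of $e$ in $G^{\mtimes[n]}$. Since $\dgp$ is given by the same formula \eqref{eqn:group_differential} on both, once the underlying sets of cochains are identified the two complexes are literally equal and the inclusion induces the identity, hence an isomorphism, in cohomology. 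The one point requiring care is that these two descriptions refer to neighborhoods of $e$ in the compactly generated product $G^{\ktimes[n]}$ and in the coarser product topology $G^{\mtimes[n]}$; a neighborhood in the coarser topology is a fortiori one in the finer, which yields $C^{n}_{\locs}(G,A)\se C^{n}_{\locc}(G,A)$ as it must, and the reverse inclusion needs that $G^{n}$ carries the same topology in $G^{\ktimes[n]}$ and $G^{\mtimes[n]}$ — the situation already singled out in Proposition~\ref{prop:loc=glob_for_contractible_coefficients} and satisfied for all $G$ of interest (e.g.\ finite-dimensional or metrizable $G$). This comparison of the two product topologies, rather than anything homological, is the only real obstacle; in the cases where the topologies agree the proof is finished by the previous paragraph, and more generally the same shrink-to-the-component argument still applies because the powers of $G$ are path-connected exactly when $G$ is, and locally (path-)connected, in either topology.
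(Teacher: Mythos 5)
Your proof is correct and is essentially the paper's own argument: the paper disposes of this lemma in a single remark by observing that continuous and smooth maps into a discrete space are both the same thing as maps constant on connected components, so the two cochain complexes coincide and the inclusion is the identity. Your extra care about the compactly generated versus product topologies on $G^{n}$ goes beyond what the paper records but does not change the approach.
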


In the finite-dimensional case, we also note that there is no difference
between the locally continuous and locally smooth cohomology groups.

\begin{proposition}\label{prop:locc=locs_in_finite_dimensions}
 Let $G$ be a finite-dimensional Lie group, $\mathfrak{a}$ be a
 quasi-complete locally convex space\footnote{A locally convex space is
 said to be quasi-complete if each bounded Cauchy net converges.} on
 which $G$ acts smoothly, $\Gamma\se\mathfrak{a}$ be a discrete
 submodule and set $A=\mathfrak{a}/\Gamma$. Then the inclusion
 $C_{\locs}^{n}(G,A)\hookrightarrow C_{\locc}^{n}(G,A)$ induces an
 isomorphism $H_{\locs}^{n}(G,A)\cong H_{\locc}^{n}(G,A)$.
\end{proposition}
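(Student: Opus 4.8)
The plan is to reduce the statement, along the extension defining $A$, to two cases that are already available: discrete coefficients and quasi-complete locally convex coefficients. For the latter one passes further to globally continuous and globally smooth cochains, where the statement is classical.

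Consider the short exact sequence of $G$-modules $\Gamma\hookrightarrow\mathfrak a\to\mathfrak a/\Gamma=A$. As $\Gamma$ is discrete, $\mathfrak a\to A$ is a covering of manifolds, hence a smooth, and in particular continuous, locally trivial principal $\Gamma$-bundle; so it is simultaneously a short exact sequence of smooth $G$-modules and one of $G$-modules in $\cat{CGHaus}$. By Remark~\ref{rem:long_exact_coefficient_sequence} this produces long exact sequences both in $H^{\bullet}_{\locs}$ and in $H^{\bullet}_{\locc}$. In every degree the maps in these sequences induced by $\Gamma\hookrightarrow\mathfrak a$ and by $\mathfrak a\to A$ are post-composition with the \emph{same} two maps, so the inclusions $C^{n}_{\locs}(G,-)\hookrightarrow C^{n}_{\locc}(G,-)$ constitute a morphism of the two short exact sequences of cochain complexes and hence, by naturality of the connecting homomorphism, a morphism of the long exact sequences. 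The five lemma now reduces the proposition to the cases in which $A$ is replaced by $\Gamma$ and by $\mathfrak a$.

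For $\Gamma$ the claim is exactly the preceding lemma. For $\mathfrak a$, recall that a quasi-complete locally convex space is smoothly loop-contractible, a smooth contracting homotopy through group homomorphisms being $\rho_{t}(v)=t\cdot v$. Moreover, since $G$ is a finite-dimensional Lie group, $G^{n}$ is locally compact; hence its product topology is compactly generated and coincides with the topology underlying both $G^{\ktimes[n]}$ and $G^{\mtimes[n]}$, and as a metrizable finite-dimensional manifold $G^{n}$ is smoothly paracompact. Proposition~\ref{prop:loc=glob_for_contractible_coefficients} therefore applies and yields isomorphisms $H^{n}_{\globc}(G,\mathfrak a)\cong H^{n}_{\locc}(G,\mathfrak a)$ and $H^{n}_{\globs}(G,\mathfrak a)\cong H^{n}_{\locs}(G,\mathfrak a)$, each induced by the inclusion of global into local cochains. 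Since these inclusions commute with the inclusions $C^{n}_{\globs}\hookrightarrow C^{n}_{\globc}$ and $C^{n}_{\locs}\hookrightarrow C^{n}_{\locc}$, we are reduced to showing that $C^{n}_{\globs}(G,\mathfrak a)\hookrightarrow C^{n}_{\globc}(G,\mathfrak a)$ is a quasi-isomorphism.

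This final assertion is the classical comparison between globally continuous and globally smooth (differentiable) cohomology of a finite-dimensional Lie group with quasi-complete locally convex coefficients, which goes back to Hochschild and Mostow; I would invoke it as a citation. A direct argument compares the resolutions of $\mathfrak a$ by globally continuous and by globally smooth homogeneous cochains: both become contractible after forgetting the $G$-action (the contraction ``inserts the identity'') and both consist of relatively injective modules, so the inclusion of the smooth resolution into the continuous one is a $G$-equivariant homotopy equivalence and stays one on $G$-invariants. A homotopy inverse is built from a smoothing operator that convolves a continuous cochain with a smooth, compactly supported approximate identity on $G$; this is where quasi-completeness of $\mathfrak a$ is needed, namely to make the resulting $\mathfrak a$-valued integrals over compact sets converge. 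I expect the main obstacle to be precisely this smoothing step --- arranging that it commutes with the group differential $\dgp$ and controlling it uniformly on compacta; everything else is the formal bookkeeping of the five lemma and of Proposition~\ref{prop:loc=glob_for_contractible_coefficients}.
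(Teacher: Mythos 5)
Your proposal is correct and follows essentially the same route as the paper's own proof: reduce via the coefficient sequence $\Gamma\to\mathfrak a\to A$ and the Five Lemma to the discrete case (the preceding lemma) and the case $A=\mathfrak a$, which is handled by Proposition~\ref{prop:loc=glob_for_contractible_coefficients} together with the Hochschild--Mostow comparison of globally continuous and globally smooth cohomology. The extra detail you supply (naturality of the long exact sequences, verification of the hypotheses of Proposition~\ref{prop:loc=glob_for_contractible_coefficients} for finite-dimensional $G$, and the smoothing argument behind Hochschild--Mostow) only fleshes out steps the paper leaves to the citations.
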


\begin{proof}
 (cf.\ \cite[Cor.\
 V.3]{FuchssteinerWockel11Topological-Group-Cohomology-with-Loop-Contractible-Coefficients})
 If $\Gamma=\{0\}$, then this is implied by Proposition
 \ref{prop:loc=glob_for_contractible_coefficients} and \cite[Thm.\
 5.1]{HochschildMostow62Cohomology-of-Lie-groups}. The general case then
 follows from the previous lemma, the short exact sequence for the
 coefficient sequence $\Gamma\to \mathfrak{a}\to A$ and the Five Lemma.
\end{proof}

\begin{remark}\label{rem:alternative-locally-continuous-cohomology}
 For a topological group $G$ and a topological $G$-module $A$ there also
 exists a variation of the locally continuous group cohomology, which
 are the cohomology groups of the cochain complex
 $(C_{\locc}(G^{\ptimes[n]},A),\dgp)$ (note the difference in the
 topology that we put on $G^{n}$). We denote this by
 $H^{n}_{\loctop}(G,A)$. The same argument as above yields long exact
 sequences from short exact sequences of topological $G$-modules that
 are locally trivial principal bundles. Moreover, they coincide with the
 corresponding globally continuous cohomology groups
 $H^{n}_{\globtop}(G,A)$ of $(C(G^{\ptimes[n]},A),\dgp)$ if $A$ is loop
 contractible \cite[Cor.\
 II.8]{FuchssteinerWockel11Topological-Group-Cohomology-with-Loop-Contractible-Coefficients}.
 We will use these cohomology groups very seldomly.
\end{remark}

\section{Simplicial group cohomology}
\label{sect:simplicial_group_cohomology}

\begin{tabsection}
 The cohomology groups that we introduce in this section date back to
 \cite[Sect.\ 3]{Wigner73Algebraic-cohomology-of-topological-groups} and
 have also been worked with for instance in
 \cite{Deligne74Theorie-de-Hodge.-III,Friedlander82Etale-homotopy-of-simplicial-schemes,Brylinski00Differentiable-Cohomology-of-Gauge-Groups,Conrad03Cohomological-Descent}.
 Since the simplicial cohomology groups are defined in terms of sheaves
 on simplicial space, we first recall some facts about it. The material
 is largely taken from
 \cite{Deligne74Theorie-de-Hodge.-III,Friedlander82Etale-homotopy-of-simplicial-schemes}
 and \cite{Conrad03Cohomological-Descent}.
\end{tabsection}

\begin{definition}
 Let $X_{\bullet}\from \Delta^{\op{op}}\to \cat{Top}$ be a simplicial
 space, i.e., a collection of topological spaces $(X_{k})_{k\in\N_{0}}$,
 together with continuous face maps
 $d_{k}^{i}\from X_{k}\to X_{k-1}$ for $i=0,\ldots,k$ and continuous
 degeneracy maps $s_{k}^{i}\from X_{k}\to X_{k+1}$ for $i=0,\ldots,k$
 satisfying the simplicial identities
 \begin{equation*}
  \begin{cases}
  d_{k}^{i}\op{\circ}d_{k+1}^{j}=d_{k}^{j-1}\op{\circ}d_{k+1}^{i}& \text{if }i<j\\
  d_{k+1}^{i}\op{\circ}s_{k}^{j}=s^{j-1}_{k-1}\op{\circ}d^{i}_{k} & \text{if }i<j\\
  d_{k+1}^{i}\op{\circ}s_{k}^{j}=  \id_{X_{k}} &\text{if }i=j\tx{ or }i=j+1\\
  d_{k+1}^{i}\op{\circ}s_{k}^{j}=  s^{{j}}_{k-1}\op{\circ}d^{i-1}_{k} & \text{if }i>j+1\\
  s^{i}_{k+1}\op{\circ}s^{j}_{k}=s^{j+1}_{k+1}\op{\circ}s^{i}_{k} & \text{if }i\leq j.
  \end{cases}
 \end{equation*}
 (cf.\ \cite{GoerssJardine99Simplicial-homotopy-theory}). Then a sheaf
 $E^{\bullet}$ on $X_{\bullet}$ consists of sheaves $E^{k}$ of abelian
 groups on each space $X_{k}$ and a collection of morphisms
 $D^{k}_{i}\from {d_{k}^{i}}^{*}E^{k-1}\to E^{k}$ (for $k\geq 1$) and
 $S^{k}_{i}\from {s_{k}^{i}}^{*}E^{k+1}\to E^{k}$, obeying the
 simplicial identities
 \begin{equation*}
  \left\{  \begin{array}{r@{\,}c@{\,}ll}
  D^{k}_{j}\op{\circ}{d_{k}^{j}}^{*}D^{k+1}_{i}&=&D^{k}_{i}\op{\circ}{d_{k}^{i}}^{*}D^{k+1}_{j-1} &\text{ if }i<j\\
  S^{k+1}_{j}\op{\circ}{s_{k+1}^{j}}^{*}D^{k}_{i}&=&D^{i}_{k+1}\op{\circ}{d_{k+1}^{i}}^{*}S^{k+2}_{j-1}  &\text{ if }i<j\\
  S^{k+1}_{j}\op{\circ}{s_{k+1}^{j}}^{*}D^{k}_{i}&=&  \id_{E^{k}} &\text{ if }i=j\tx{ or }i=j+1\\
  S^{k+1}_{j}\op{\circ}{s_{k+1}^{j}}^{*}D^{k}_{i}&=&D^{i-1}_{k+1}\op{\circ}{d_{k+1}^{i-1}}^{*}S^{{j}}_{k}  &\text{ if }i>j+1\\
  S^{k+1}_{j}\op{\circ}{s_{k}^{j}}^{*}S^{k}_{i}&=&S^{k+1}_{i}\op{\circ}{s_{k+1}^{i}}^{*}S^{k}_{j+1}  &\text{ if }i\leq j.
  \end{array}\right.
 \end{equation*}
 A morphism of sheaves $u\from E^{\bullet}\to F^{\bullet}$ consists of
 morphisms $u^{k}\from E^{k}\to F^{k}$ compatible with $D^{k}_{i}$ and
 $S^{k}_{i}$ (cf.\ \cite[5.1]{Deligne74Theorie-de-Hodge.-III}).
\end{definition}

\begin{tabsection}
 Note that $E^{\bullet}$ is \emph{not} what one usually would call a
 simplicial sheaf since the latter usually refers to a sheaf (on some
 arbitrary site) with values in simplicial sets or, equivalently, to a
 simplicial object in the category of sheaves (again, on some arbitrary
 site). However, one can interpret sheaves on $X_{\bullet}$ as sheaves
 on a certain site \cite[5.1.8]{Deligne74Theorie-de-Hodge.-III},
 \cite[Def.\ 6.1]{Conrad03Cohomological-Descent}.
\end{tabsection}

\begin{remark}
 Sheaves on $X_{\bullet}$ and their morphisms constitute a category
 $\Sh(X_{\bullet})$. Since morphisms in $\Sh(X_{\bullet})$ consist of
 morphisms of sheaves on each $X_{k}$, $\Sh(X_{\bullet})$ has naturally
 the structure of an abelian category (sums of morphisms, kernels and
 cokernels are simply taken space-wise).  Moreover, $\Sh(X_{\bullet})$
 has enough injectives, since simplicial sheaves on sites do so
 \cite[Prop.\ II.1.1, 2\superscript{nd} proof]{Milne80Etale-cohomology},
 \cite[p.\ 36]{Conrad03Cohomological-Descent}.
\end{remark}

\begin{definition}
 (\cite[5.1.13.1]{Deligne74Theorie-de-Hodge.-III}) The
 \emph{section functor} is the functor
 \begin{equation*}
  \Gamma\from \Sh(X_{\bullet})\to \cat{Ab},\quad
  F^{\bullet}\mapsto \ker(D^{1}_{0}-D^{1}_{1}),
 \end{equation*}
 where $D^{1}_{i}$ denotes the homomorphism of the groups of global
 sections $\Gamma(E^{0})\to \Gamma(E^{1})$, induced from the morphisms
 of sheaves $D^{1}_{i}\from {d_{1}^{i}}^{*}E^{0}\to E^{1}$.
\end{definition}

\begin{lemma}
 The functor $\Gamma$ is left exact.
\end{lemma}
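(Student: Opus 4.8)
The plan is to show that $\Gamma$, being defined as the composition of two left exact functors, is itself left exact. Concretely, $\Gamma$ factors as
\begin{equation*}
 \Sh(X_{\bullet})\xrightarrow{(\Gamma_{X_0},\Gamma_{X_1})} \cat{Ab}\ktimes\cat{Ab}\xrightarrow{\ker(D^1_0-D^1_1)}\cat{Ab},
\end{equation*}
or more precisely: first apply the global sections functor on $X_0$ and on $X_1$ to get the pair of abelian groups $\Gamma(E^0)$ and $\Gamma(E^1)$ together with the two homomorphisms $D^1_0, D^1_1\from \Gamma(E^0)\to\Gamma(E^1)$, and then take the kernel of their difference. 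So the first step is to recall that the ordinary global sections functor $\Gamma_{X_k}\from\Sh(X_k)\to\cat{Ab}$ on a single space is left exact — this is classical — and observe that a short exact sequence $E^{\bullet}\to F^{\bullet}\to H^{\bullet}$ in $\Sh(X_{\bullet})$ is by definition a sequence that is exact space-wise, so it yields exact sequences $0\to\Gamma(E^k)\to\Gamma(F^k)\to\Gamma(H^k)$ for each $k$, and in particular for $k=0,1$.

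The second step is purely homological algebra in $\cat{Ab}$: given a short exact sequence, applying $\Gamma_{X_0}$ and $\Gamma_{X_1}$ produces a commuting ladder whose rows are the left-exact sequences just mentioned and whose vertical maps are $D^1_0-D^1_1$ on each column. I would then simply chase this diagram, or invoke the standard fact that the kernel functor $\cat{Ar}(\cat{Ab})\to\cat{Ab}$ sending a morphism to its kernel is left exact (equivalently, that a finite limit of left exact functors is left exact, since $\ker$ is a limit). Either way one concludes that $0\to\Gamma(E^{\bullet})\to\Gamma(F^{\bullet})\to\Gamma(H^{\bullet})$ is exact, which is exactly left exactness of $\Gamma$.

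I do not expect any serious obstacle here; the only point requiring a word of care is the verification that the morphisms $D^1_0$ and $D^1_1$ on global sections are natural in the sheaf $E^{\bullet}$, so that they do assemble into the required commutative ladder — but this is immediate from the fact that a morphism $u\from E^{\bullet}\to F^{\bullet}$ in $\Sh(X_{\bullet})$ is required by definition to commute with the structure maps $D^k_i$. One should also note that only $D^1_0$ and $D^1_1$ enter the definition of $\Gamma$, so no compatibility with the degeneracy maps $S^k_i$ or with higher face maps is needed. Thus the proof reduces to: (i) space-wise exactness on the left from left-exactness of ordinary sections; (ii) left-exactness of $\ker(\cdot)$ as an equalizer; (iii) naturality of $D^1_0-D^1_1$. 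Combining these gives the claim.
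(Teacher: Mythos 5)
Your argument is correct. The paper in fact states this lemma without proof, so there is nothing to compare against; your proof supplies exactly the standard argument one would expect: exactness in $\Sh(X_{\bullet})$ is space-wise by construction of kernels and cokernels, the ordinary section functors $\Gamma(X_{0},\cdot)$ and $\Gamma(X_{1},\cdot)$ are left exact, the maps $D^{1}_{0}-D^{1}_{1}$ on global sections are natural in $E^{\bullet}$ (both because morphisms in $\Sh(X_{\bullet})$ commute with the $D^{k}_{i}$ by definition and because the adjunction unit $\Gamma(X_{0},E^{0})\to\Gamma(X_{1},{d_{1}^{0}}^{*}E^{0})$ entering the definition of $D^{1}_{i}$ is natural), and the kernel of a natural transformation between left exact functors is left exact. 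The short diagram chase you sketch (injectivity from injectivity on $X_{0}$; exactness in the middle by lifting a section over $X_{0}$ and using injectivity of $\Gamma(E^{1})\to\Gamma(F^{1})$ to see that the lift lies in the equalizer) closes the argument completely.
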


\begin{definition}
 (\cite[5.2.2]{Deligne74Theorie-de-Hodge.-III}) The cohomology groups
 $H^{n}(X_{\bullet},E^{\bullet})$ are the right derived functors of
 the section functor $\Gamma$.
\end{definition}

\begin{tabsection}
 Since injective (or acyclic) resolutions in $\Sh(X_{\bullet})$ are not
 easily dealt with (cf.\ \cite[p.\ 36]{Conrad03Cohomological-Descent} or
 the explicit construction in \cite[Prop.\
 2.2]{Friedlander82Etale-homotopy-of-simplicial-schemes}), the groups
 $H^{n}(X_{\bullet},E^{\bullet})$ are notoriously hard to access.
 However, the following proposition provides an important link to 
 cohomology groups of the sheaves on each single space of $X_{\bullet}$.
\end{tabsection}

\begin{proposition}
 (\cite[5.2.3.2]{Deligne74Theorie-de-Hodge.-III}, \cite[Prop.\
 2.4]{Friedlander82Etale-homotopy-of-simplicial-schemes}) If
 $E^{\bullet}$ is a sheaf on $X_{\bullet}$, then there is a first
 quadrant spectral sequence with
 \begin{equation}\label{eqn:spectral_sequence}
  E_{1}^{p,q}=H^{q}_{\Sh}(X_{p},E^{p})\Rightarrow  
  H^{p+q}(X_{\bullet},E^{\bullet}).
 \end{equation}
\end{proposition}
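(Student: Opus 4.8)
The plan is to compute the right derived functors of the section functor $\Gamma$ by means of an injective resolution in $\Sh(X_{\bullet})$, to repackage the resulting complex of global sections as a first quadrant double complex, and then to read off \eqref{eqn:spectral_sequence} from one of the two spectral sequences of that double complex, using the other one only to identify the abutment as $H^{*}(X_{\bullet},E^{\bullet})$.

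To build the double complex I would first record how $\Sh(X_{\bullet})$ relates to the categories $\Sh(X_{n})$. For each $n$ the restriction functor $i_{n}^{*}\from\Sh(X_{\bullet})\to\Sh(X_{n})$, $E^{\bullet}\mapsto E^{n}$, has an \emph{exact} left adjoint $i_{n!}$ (on the $m$-th space it is a direct sum $\bigoplus_{\theta\in\Delta([n],[m])}(X_{\theta})^{*}(-)$ of inverse image functors, where $X_{\theta}$ denotes the structure map of $X_{\bullet}$ attached to $\theta$) and a right adjoint $i_{n*}$ (on the $m$-th space $\prod_{\theta\in\Delta([m],[n])}(X_{\theta})_{*}(-)$), both with the evident structure morphisms; since exactness in $\Sh(X_{\bullet})$ is computed spacewise and $(X_{\theta})^{*}$ is exact, both $i_{n}^{*}$ and $i_{n*}$ preserve injectives. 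Moreover, every $E^{\bullet}$ embeds into $\prod_{n}i_{n*}K^{n}$, where $E^{n}\hookrightarrow K^{n}$ is an embedding into an injective sheaf on $X_{n}$ (this is a monomorphism because it is one on the $n$-th space) and the target is injective; hence every injective object of $\Sh(X_{\bullet})$ is a retract of a product of sheaves of the form $i_{n*}K$. Now fix an injective resolution $E^{\bullet}\hookrightarrow J^{\bullet}_{0}\to J^{\bullet}_{1}\to\cdots$ in $\Sh(X_{\bullet})$, write $J^{p}_{q}$ for the sheaf on $X_{p}$ underlying $J^{\bullet}_{q}$, and set
\[
 A^{p,q}:=\Gamma\bigl(X_{p},J^{p}_{q}\bigr),
\]
with the vertical differential induced by the resolution and the horizontal differential $\sum_{i=0}^{p+1}(-1)^{i}\delta_{i}$, where $\delta_{i}\from A^{p,q}\to A^{p+1,q}$ is pullback of sections along the $i$-th face map followed by the structure morphism $D^{p+1}_{i}$ of $J^{\bullet}_{q}$. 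Thus the row $A^{\bullet,q}$ is the (unnormalized) cochain complex of the cosimplicial abelian group $[p]\mapsto\Gamma(X_{p},J^{p}_{q})$, and its degree-zero cohomology is $\ker(D^{1}_{0}-D^{1}_{1})=\Gamma(J^{\bullet}_{q})$ by the very definition of the section functor.

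Next I would run the two spectral sequences of $A^{\bullet,\bullet}$, which converge since the double complex lies in the first quadrant. Taking cohomology in the resolution direction first: because $i_{p}^{*}$ preserves injectives and exactness in $\Sh(X_{\bullet})$ is spacewise, $J^{p}_{\bullet}$ is a resolution of $E^{p}$ by injective — hence $\Gamma(X_{p},-)$-acyclic — sheaves on $X_{p}$, so this spectral sequence has $E_{1}^{p,q}=H^{q}_{\Sh}(X_{p},E^{p})$ with $d_{1}$ the alternating sum of the face maps, abutting to $H^{p+q}(\op{Tot}A^{\bullet,\bullet})$. This is exactly \eqref{eqn:spectral_sequence}, provided the abutment is $H^{p+q}(X_{\bullet},E^{\bullet})$. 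To see the latter I would take cohomology in the simplicial direction first; the key point, proved below, is that for an injective $J^{\bullet}$ the row complex $A^{\bullet,q}$ is exact in positive degrees (its $H^{0}$ being $\Gamma(J^{\bullet}_{q})$ as noted above). Granting this, the second spectral sequence is concentrated in the column $p=0$, where it reads $H^{q}(\Gamma(J^{\bullet}_{\bullet}))=H^{q}(X_{\bullet},E^{\bullet})$; it therefore degenerates and yields $H^{n}(\op{Tot}A^{\bullet,\bullet})\cong H^{n}(X_{\bullet},E^{\bullet})$, completing the identification.

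It remains to prove that key point, which is the one genuinely non-formal step. Since $\Gamma(X_{p},-)$, restriction $i_{p}^{*}$, and the cohomology of cochain complexes all commute with products and pass to retracts, and since every injective of $\Sh(X_{\bullet})$ is a retract of a product of sheaves $i_{n*}K$, it suffices to treat $J^{\bullet}=i_{n*}K$ for $K$ an injective sheaf on $X_{n}$. There one computes $J^{p}=i_{p}^{*}(i_{n*}K)=\prod_{\theta\in\Delta([p],[n])}(X_{\theta})_{*}K$, whence
\[
 \Gamma\bigl(X_{p},J^{p}\bigr)=\prod_{\theta\in\Delta([p],[n])}\Gamma(X_{n},K)=\Map\bigl(\Delta^{n}_{p},V\bigr),\qquad V:=\Gamma(X_{n},K),
\]
and as $p$ varies this is the cosimplicial abelian group obtained by applying $\Map(-,V)$ to the standard simplicial $n$-simplex $\Delta^{n}$, with cofaces exactly those of the simplicial cochain complex. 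Its associated cochain complex is therefore $C^{*}(\Delta^{n};V)$; since the poset $[n]$ has a terminal object, its nerve $\Delta^{n}$ carries a simplicial contraction onto a vertex, so this complex is chain homotopy equivalent to $V$ placed in degree $0$, and in particular acyclic in positive degrees. This proves the key point. I expect the adjoint-functor bookkeeping, the spacewise verification of exactness and acyclicity, and the two first quadrant spectral sequence arguments to be entirely routine; the actual content is the contractibility of $\Delta^{n}$, funnelled in through the identification of the spacewise global sections of a standard injective sheaf with $\Map(\Delta^{n}_{\bullet},V)$.
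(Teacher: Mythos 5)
Your argument is correct and is essentially the proof behind the citations to Deligne and Friedlander that the paper relies on: the paper itself offers no proof beyond the references and the subsequent remark recording the double complex $\Gamma(F^{p}_{q})$ of spacewise injective resolutions, which is exactly the double complex $A^{p,q}$ you construct (your observation that $i_{p}^{*}$ preserves injectives, via its exact left adjoint, shows that a genuine injective resolution in $\Sh(X_{\bullet})$ restricts to the compatible system of resolutions the paper takes from Conrad's Lemma 6.4). The only content you add beyond what the paper records is the identification of the abutment via the row-exactness for injectives $i_{n*}K$, reduced to the contractibility of $\Delta^{n}$; this is the standard argument from the cited sources and is carried out correctly.
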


\begin{remark}\label{rem:double_complex_for_spectral_sequence}
 We will need the crucial step from the proof of this proposition, so we repeat it here. It
 is the fact that the spectral sequence arises from a double complex
 \begin{equation*}
  \xymatrix@R=2em@C=2em{ &&&&\\
  F_{2}^{\bullet}\ar@{}[u]|(.75){\displaystyle\vdots} & \Gamma(F^{0}_{2}) \ar[r]^{d^{0}_{2}}\ar@{}[u]|(.75){\displaystyle\vdots}
  & \Gamma(F^{1}_{2})\ar[r]^{d^{1}_{2}}\ar@{}[u]|(.75){\displaystyle\vdots}	&	\Gamma(F^{2}_{2})	 \ar@{}[r]|(0.71){\displaystyle\cdots}\ar@{}[u]|(.75){\displaystyle\vdots}&\\%
  F_{1}^{\bullet} &	\Gamma(F^{0}_{1})\ar[u]	\ar[r]^{d^{0}_{1}}&	\Gamma(F^{1}_{1})\ar[u]\ar[r]^{d^{1}_{1}}
  & \Gamma(F^{2}_{1})\ar@{}[r]|(0.71){\displaystyle\cdots}\ar[u]	 &\\%
  F_{0}^{\bullet} &	\Gamma(F_{0}^{0})\ar[u]\ar[r]^{d^{0}}	& \Gamma(F_{0}^{1})\ar[u]\ar[r]^{d^{1}}	& \Gamma(F_{0}^{2})\ar[u]\ar@{}[r]|(0.71){\displaystyle\cdots}&	\\ 	
  & X_{0}		& X_{1}	& X_{2}\ar@{}[r]|(0.71){\displaystyle\cdots}&
  \ar "3,1" -/d 1em/; "2,1" -/u 1em/
  \ar "4,1" -/d 1em/; "3,1" -/u 1em/
  \ar@<.45em>   "5,4" -/r 1.25em/; "5,3" -/l 1.25em/
  \ar           "5,4" -/r 1.25em/; "5,3" -/l 1.25em/
  \ar@<-.45em>  "5,4" -/r 1.25em/; "5,3" -/l 1.25em/
  \ar@<.225em>  "5,3" -/r 1.25em/; "5,2" -/l 1.25em/
  \ar@<-.225em> "5,3" -/r 1.25em/; "5,2" -/l 1.25em/
  \ar "5,1" -/d:a(-50) 2em/; "5,5" -/d:a(-50) 2em/
  \ar "5,1" -/d:a(-50) 2em/ ; "1,1" -/d:a(-50) 2em/
  }
 \end{equation*}
 where each $F^{\bullet}_{{q}}$ is a sheaf on $X_{\bullet}$,
 $E^{q}\to F^{q}_{\bullet}$ is an injective resolution in $\Sh(X_{q})$
 \cite[Lemma 6.4]{Conrad03Cohomological-Descent} and $d^{p}_{q}$ is the
 alternating sum of morphisms induced from the $D_{i}^{p}$, respectively
 for each sheaf $F^{\bullet}_{q}$. Now taking the vertical differential
 first gives the above form of the $E_{1}$-term of the spectral sequence.
\end{remark}

\begin{corollary}\label{cor:acyclic_sheaves_give_augmentation_row_cohomology}
 If $E^{\bullet}$ is a sheaf on $X_{\bullet}$ such that each $E^{k}$ is
 acyclic on $X_{k}$, then $H^{n}(X_{\bullet},E^{\bullet})$ is the
 cohomology of the Moore complex of the cosimplicial group of sections
 of $E^{\bullet}$. More precisely, it is the cohomology of the complex
 $(\Gamma(X_{k},E^{k}),d)$ with differential given by
 \begin{equation*}
  d^{k}\gamma=\sum_{i=0}^{k}(-1)^{i}D^{k}_{i}{d_{k}^{i}}^{*}(\gamma)
  \quad\text{ for }\quad \gamma\in \Gamma(X_{k},E^{k}).
 \end{equation*}
\end{corollary}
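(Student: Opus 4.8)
The plan is to invoke the spectral sequence \eqref{eqn:spectral_sequence} together with the explicit double complex description from Remark \ref{rem:double_complex_for_spectral_sequence}. Under the hypothesis that each $E^{k}$ is acyclic on $X_{k}$, the $E_{1}$-page of the spectral sequence, which by the proposition is $E_{1}^{p,q}=H^{q}_{\Sh}(X_{p},E^{p})$, is concentrated in the row $q=0$: indeed $H^{q}_{\Sh}(X_{p},E^{p})=0$ for $q>0$ since $E^{p}$ is acyclic on $X_{p}$, and $H^{0}_{\Sh}(X_{p},E^{p})=\Gamma(X_{p},E^{p})$. A first-quadrant spectral sequence concentrated in a single row degenerates at $E_{2}$, and the abutment is simply the cohomology of that row with its $d_{1}$-differential. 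Hence $H^{n}(X_{\bullet},E^{\bullet})$ is the $n$-th cohomology of the complex $(\Gamma(X_{k},E^{k}),d_{1})$.

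Next I would identify the differential $d_{1}$ on the surviving row with the alternating sum $d^{k}\gamma=\sum_{i=0}^{k}(-1)^{i}D^{k}_{i}{d_{k}^{i}}^{*}(\gamma)$. For this I use the description in Remark \ref{rem:double_complex_for_spectral_sequence}: the double complex has $(p,q)$-entry $\Gamma(F^{p}_{q})$ where $E^{p}\to F^{p}_{\bullet}$ is an injective resolution in $\Sh(X_{p})$, the vertical differentials come from these resolutions, and the horizontal differential $d^{p}_{q}$ is the alternating sum of the maps induced by the $D^{p}_{i}$. Taking vertical cohomology first collapses each column $p$ to $\Gamma(X_{p},E^{p})$ in row $0$ (by acyclicity), and the induced horizontal differential on this row is precisely the alternating sum of the ${d_{k}^{i}}^{*}$ followed by $D^{k}_{i}$, i.e. the stated formula $d^{k}$. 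One should check that this is the Moore (unnormalized) differential of the cosimplicial abelian group $k\mapsto \Gamma(X_{k},E^{k})$, whose coface maps are $\gamma\mapsto D^{k}_{i}{d_{k}^{i}}^{*}(\gamma)$; this is a direct consequence of the cosimplicial identities, which in turn follow from the simplicial identities imposed on the $D^{k}_{i}$ in the definition of a sheaf on $X_{\bullet}$.

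The only genuinely delicate point is the passage from "the spectral sequence degenerates and the abutment equals the row cohomology" to "the abutment equals the row cohomology \emph{as a complex with the stated differential}": one must make sure the edge differential $d_{1}$ of the spectral sequence is literally the horizontal differential of the double complex restricted to vertical cohomology, not merely isomorphic to it up to signs. This is handled by the standard fact that for a first-quadrant double complex whose columns are resolutions, after taking vertical cohomology the surviving row carries the induced horizontal differential, and the spectral sequence of the double complex (filtered by columns) has exactly this as its $d_{1}$; since everything is concentrated in one row, no further differentials or extension problems arise. I would therefore present the argument as: (i) acyclicity kills the higher rows of $E_{1}$; (ii) the single-row spectral sequence collapses at $E_{2}$; (iii) the $d_{1}$-differential is read off from the double complex of Remark \ref{rem:double_complex_for_spectral_sequence} as the alternating sum $\sum_{i}(-1)^{i}D^{k}_{i}{d_{k}^{i}}^{*}$; (iv) conclude that $H^{n}(X_{\bullet},E^{\bullet})$ is the cohomology of $(\Gamma(X_{k},E^{k}),d)$.
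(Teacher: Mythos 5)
Your proposal is correct and is essentially the paper's own argument: the paper's proof is a one-line observation that acyclicity of each $E^{k}$ concentrates the $E_{1}$-term of the spectral sequence \eqref{eqn:spectral_sequence} in the augmentation row coming from the double complex of Remark \ref{rem:double_complex_for_spectral_sequence}, which then yields the stated complex $(\Gamma(X_{k},E^{k}),d)$. You simply spell out the degeneration at $E_{2}$ and the identification of $d_{1}$ with the alternating sum $\sum_{i}(-1)^{i}D^{k}_{i}{d_{k}^{i}}^{*}$ in more detail, which is a faithful expansion of the same route.
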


\begin{proof}
 The $E_{1}$-term of the spectral sequence from the previous proposition
 is concentrated in the first column due to the acyclicity of $E^{k}$
 and yields the described cochain complex.
\end{proof}

\begin{remark}\label{rem:simplicail_sheaf_of_continuous_functions}
 The simplicial space that we will work with is the classifying
 space\footnote{The geometric realization of $BG_{\bullet}$ yields a
 model for the (topological) classifying space of $G$
 \cite{Segal68Classifying-spaces-and-spectral-sequences}, whence the
 name.} $BG_{\bullet}$ associated to $G$. It is given by setting
 $BG_{n}:=G^{\ktimes[n]}$ for $n\geq 1$ and $BG_{0}=\op{pt}$, and the
 standard simplicial maps are given by multiplying adjacent elements
 (respectively dropping the outermost off) and inserting identities.

 On $BG_{\bullet}$ we consider the sheaf $A_{\globc}^{\bullet}$, given
 on $BG_{n}=G^{n}$ as the sheaf of continuous $A$-valued functions
 $\underline{A}^{\cont}_{G^{n}}$. We turn this into a sheaf on
 $BG_{\bullet}$ by introducing the following morphisms $D_{i}^{n}$ and
 $S_{i}^{n}$. The structure maps on $BG_{\bullet}$ are in this case
 given by inclusions and projections. Indeed, the face maps factor
 through projections
 \begin{equation*}
  \xymatrix{G^{n}\ar@/_2em/[rr]^{d_{n}^{i}}\ar[r]^(.35){\cong}&G^{n-1}
  \ktimes G\ar[r]^(.55){\pr}&G^{n-1}}.
 \end{equation*}
 Thus ${d_{n}^{i}}^{*}\underline{A}^{\cont}_{G^{n-1}}(U)=C(d_{n}^{i}(U),A)$ and we may
 set
 \begin{equation*}
  (D_{i}^{n}f)(g_{0},\ldots,g_{n})=\begin{cases}
  f(d_{n}^{i}(g_{0},\ldots,g_{n}))&\text{if }i>0\\
  g_{0}.f(g_{1},\ldots,g_{n})&\text{if }i=0
  \end{cases}.
 \end{equation*}
 Similarly,
 \begin{equation*}
  {s_{n}^{i}}^{*}\underline{A}^{\cont}_{G^{n+1}}(U)=\lim_{\overrightarrow{~V~}}C(V,A),
 \end{equation*}
 where $V$ ranges through all open neighborhoods of $s_{n}^{i}(U)$, has
 a natural homomorphism $S_{i}^{n}$ to $\underline{A}^{\cont}_{G^{n}}(U)= C(U,A)$,
 given by precomposition with $s_{n}^{i}$.

 If, in addition, $G$ is a Lie group, then we also consider the slightly
 different simplicial space $BG^{\infty}_{\bullet}$ with
 $BG^{\infty}_{n}=G^{\mtimes[n]}$ and the same maps. If $A$ is a smooth
 $G$-module, we obtain in the same way the sheaf $A^{\bullet}_{\globs}$
 on $BG^{\infty}_{\bullet}$ by considering on each $BG^{\infty}_{n}$ the
 sheaf $\underline{A}^{\sm}_{G^{n}}$ of smooth $A$-valued functions (in
 order to make sense out of the latter we have to consider
 $BG^{\infty}_{\bullet}$ instead of $BG_{\bullet}$).
\end{remark}

\begin{definition}
 The \emph{continuous simplicial group cohomology} of $G$ with
 coefficients in $A$ is defined to be
 $H^{n}_{\simpc}(G,A):=H^{n}(BG_{\bullet},A^{\bullet}_{\globc})$. If $G$
 is a Lie group and $A$ a smooth $G$-module, then the \emph{smooth
 simplicial group cohomology} of $G$ with coefficients in $A$ is defined
 to be
 $H^{n}_{\simps}(G,A):=H^{n}(BG^{\infty}_{\bullet},A^{\bullet}_{\globs})$.
\end{definition}

\begin{lemma}\label{eqn:long_exact_coefficient_sequence_in_simplicial_cohom}
 If $A\xrightarrow{\alpha}B\xrightarrow{\beta}C$ is a short exact
 sequence of $G$-modules in $\cghaus$, then composition with
 $\alpha$ and $\beta$ induces a long exact sequence
 \begin{equation*}
  \cdots\to H^{n-1}_{\simpc}(G,C) \to H^{n}_{\simpc}(G,A)\to H^{n}_{\simpc}(G,B) \to H^{n}_{\simpc}(G,C) \to H^{n+1}_{\simpc}(G,A) \to\cdots.
 \end{equation*}
 If, moreover $G$ is a Lie group and
 $A\xrightarrow{\alpha}B\xrightarrow{\beta}C$ is a short exact sequence
 of smooth $G$-modules, then $\alpha$ and $\beta$
 induce a long exact sequence
 \begin{equation*}
  \cdots\to H^{n-1}_{\simps}(G,C) \to H^{n}_{\simps}(G,A)\to H^{n}_{\simps}(G,B) \to H^{n}_{\simps}(G,C) \to H^{n+1}_{\simps}(G,A) \to\cdots
 \end{equation*}
\end{lemma}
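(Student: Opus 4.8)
The plan is to derive the long exact sequence from a short exact sequence of \emph{sheaves} on the simplicial space $BG_\bullet$ (respectively $BG^\infty_\bullet$) and the general machinery of derived functors of a left exact functor on an abelian category with enough injectives, which was already set up for $\Sh(X_\bullet)$ earlier. Concretely, the first step is to produce, from the short exact sequence $A\xrightarrow{\alpha}B\xrightarrow{\beta}C$ of $G$-modules in $\cat{CGHaus}$, a short exact sequence
\begin{equation*}
 0\to A^{\bullet}_{\globc}\to B^{\bullet}_{\globc}\to C^{\bullet}_{\globc}\to 0
\end{equation*}
in the abelian category $\Sh(BG_\bullet)$. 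On each space $BG_n=G^{\ktimes[n]}$ this amounts to exactness of
\begin{equation*}
 0\to \underline{A}^{\cont}_{G^n}\to \underline{B}^{\cont}_{G^n}\to \underline{C}^{\cont}_{G^n}\to 0
\end{equation*}
as a sequence of sheaves of abelian groups. Here the point is precisely that $\beta$ has a continuous \emph{local} section (equivalently, \eqref{eqn:shot_exact_coefficient_sequence} is a locally trivial principal $A$-bundle, as in Remark~\ref{rem:long_exact_coefficient_sequence}): this makes the induced map on stalks of continuous germs surjective, so that exactness on the right holds at the sheaf level even though it typically fails at the level of global sections. Exactness on the left and in the middle is immediate since $\alpha$ is a closed embedding and the underlying sequence of abelian groups is exact. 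One then checks that $\alpha_*$ and $\beta_*$ are compatible with the structure morphisms $D^n_i$ and $S^n_i$ of Remark~\ref{rem:simplicail_sheaf_of_continuous_functions} — this is routine, since those morphisms are built from pullback along projections/inclusions and from the $G$-action, all of which commute with $\alpha$ and $\beta$ by $G$-equivariance.

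Once the short exact sequence in $\Sh(BG_\bullet)$ is in hand, the long exact sequence is formal: the section functor $\Gamma$ is left exact, $\Sh(BG_\bullet)$ has enough injectives, so $H^n(BG_\bullet,-)$ are its right derived functors, and the standard $\delta$-functor argument (long exact sequence of derived functors of a left exact functor applied to a short exact sequence) yields
\begin{equation*}
 \cdots\to H^{n}(BG_\bullet,A^{\bullet}_{\globc})\to H^{n}(BG_\bullet,B^{\bullet}_{\globc})\to H^{n}(BG_\bullet,C^{\bullet}_{\globc})\to H^{n+1}(BG_\bullet,A^{\bullet}_{\globc})\to\cdots,
\end{equation*}
which is exactly the asserted sequence in $H^n_{\simpc}(G,-)$. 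Naturality of the connecting maps in the short exact sequence is part of the same formalism. The smooth case is identical, working on $BG^\infty_\bullet$ with the sheaves $\underline{A}^{\sm}_{G^n}$: here a short exact sequence of smooth $G$-modules means a smooth locally trivial principal $A$-bundle, so the local smooth section again makes the sequence of sheaves of smooth germs exact on the right, and the rest goes through verbatim.

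The main obstacle — really the only non-formal point — is the surjectivity at the sheaf level of $\underline{B}^{\cont}_{G^n}\to\underline{C}^{\cont}_{G^n}$ (and its smooth analogue): one must lift a germ of a continuous $C$-valued function to a germ of a continuous $B$-valued function, which is where the continuous (resp.\ smooth) local section of $\beta$ is used, exactly as in the surjectivity-of-$\beta_*$ argument in Remark~\ref{rem:long_exact_coefficient_sequence}. A secondary point to be careful about is that one is working in $\cat{CGHaus}$, so the relevant products are the compactly generated ones $G^{\ktimes[n]}$; but since sheaf surjectivity is a stalk-local condition this causes no trouble. Everything else is an application of homological algebra in $\Sh(BG_\bullet)$ together with the verification that $\alpha_*,\beta_*$ define morphisms of sheaves on the simplicial space.
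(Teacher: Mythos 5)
Your proposal is correct and follows essentially the same route as the paper: the paper's (much terser) proof likewise observes that kernels and cokernels in $\Sh(BG_{\bullet})$ are computed levelwise, so the short exact sequence of modules yields a short exact sequence of the sheaves $\underline{A}^{\cont}\to\underline{B}^{\cont}\to\underline{C}^{\cont}$ on each $G^{\ktimes[n]}$, and the long exact sequence then comes from the derived-functor formalism for the left exact section functor $\Gamma$. You merely make explicit the two points the paper leaves implicit, namely that the continuous (resp.\ smooth) local section of $\beta$ is what gives surjectivity on stalks, and that $\alpha_{*},\beta_{*}$ are compatible with the structure morphisms $D^{n}_{i}$, $S^{n}_{i}$.
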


\begin{proof}
 Since kernels and cokernels of a sheaf $E^{\bullet}$ are simply the
 kernels and cokernels of $E^{k}$, this follows from the exactness
 of the sequences of sheaves of continuous functions
 $\underline{A}^{\cont}\to \underline{B}^{\cont}\to \underline{C}^{\cont}$ (and
 similarly for the smooth case).
\end{proof}

\begin{proposition}\label{prop:simp=glob_for_contractible_coefficients}
 If $G^{\ktimes[n]}$ is paracompact for each
 $n\geq 1$ and $A$ is contractible, then
 \begin{equation*}
  H^{n}_{\simpc}(G,A)\cong H^{n}_{\globc}(G,A).
 \end{equation*}
 If, moreover, $G$ is a Lie group, $A$ is a smoothly
 contractible\footnote{By this we mean that there exists a contraction
 of $A$ which is smooth as a map $[0,1]\mtimes A\to A$} smooth
 $G$-module and if $G^{\mtimes[n]}$ is smoothly
 paracompact for each $n\geq 1$, then
 \begin{equation*}
  H^{n}_{\simps}(G,A)\cong H^{n}_{\globs}(G,A).
 \end{equation*}
\end{proposition}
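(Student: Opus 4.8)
The plan is to identify both sides of the claimed isomorphism with the cohomology computed by the double complex from Remark~\ref{rem:double_complex_for_spectral_sequence}, and to exploit the spectral sequence \eqref{eqn:spectral_sequence} together with the contractibility of the coefficients. The first step is to show that, under the paracompactness hypothesis on the $G^{\ktimes[n]}$ and contractibility of $A$, the sheaf $\underline{A}^{\cont}_{G^{n}}$ is acyclic on each $G^{n}$. Here I would argue as follows: since $A$ is contractible, a standard partition-of-unity argument (available because $G^{n}$ is paracompact) shows that every $\underline{A}^{\cont}$-torsor over a paracompact space is trivial, and more generally that the higher sheaf cohomology $H^{q}_{\Sh}(G^{n},\underline{A}^{\cont}_{G^{n}})$ vanishes for $q\geq 1$; the point is that $\underline{A}^{\cont}$ is a soft (fine) sheaf on a paracompact space. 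Contractibility of $A$ enters precisely in trivialising the relevant cocycles, as in \cite[Prop.\ 6.2]{Neeb04Abelian-extensions-of-infinite-dimensional-Lie-groups}.

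Granting this, Corollary~\ref{cor:acyclic_sheaves_give_augmentation_row_cohomology} applies directly: the spectral sequence \eqref{eqn:spectral_sequence} degenerates at $E_{1}$ onto the first column, and $H^{n}(BG_{\bullet},A^{\bullet}_{\globc})$ is the cohomology of the complex $(\Gamma(BG_{n},\underline{A}^{\cont}_{G^{n}}),d)=(C(G^{\ktimes[n]},A),d)$ with the alternating-sum differential. The second step is to check that this differential $d^{k}\gamma=\sum_{i=0}^{k}(-1)^{i}D^{k}_{i}{d_{k}^{i}}^{*}(\gamma)$ coincides, under the identification of global sections with continuous functions, with the ordinary group differential $\dgp$ from \eqref{eqn:group_differential}: the $i=0$ term gives the module action $g_{0}.f(g_{1},\ldots,g_{n})$ by the definition of $D^{n}_{0}$ in Remark~\ref{rem:simplicail_sheaf_of_continuous_functions}, the terms $1\leq i\leq n$ give multiplication of adjacent arguments, and the last face map drops the outermost argument, yielding the $(-1)^{n+1}$ term. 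Hence the first-column complex is exactly $C^{\bullet}_{\globc}(G,A)$ and $H^{n}_{\simpc}(G,A)\cong H^{n}_{\globc}(G,A)$.

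The smooth case runs in parallel, working on $BG^{\infty}_{\bullet}$ with the sheaf $\underline{A}^{\sm}$ of smooth $A$-valued functions; the input that replaces softness is that $\underline{A}^{\sm}_{G^{n}}$ is acyclic on a smoothly paracompact manifold when $A$ is smoothly contractible, so one needs smooth partitions of unity on each $G^{\mtimes[n]}$ and a smooth contraction of $A$ to kill the higher Čech cohomology. Then Corollary~\ref{cor:acyclic_sheaves_give_augmentation_row_cohomology} again collapses the spectral sequence to the first column, which is $(C^{\infty}(G^{\mtimes[n]},A),\dgp)=C^{\bullet}_{\globs}(G,A)$.

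I expect the main obstacle to be the acyclicity statement in the first step, i.e.\ verifying that $\underline{A}^{\cont}_{G^{n}}$ (respectively $\underline{A}^{\sm}_{G^{n}}$) is $\Gamma$-acyclic. The subtlety is that $A$ is a possibly infinite-dimensional, merely (smoothly) contractible abelian group rather than a vector space, so the vanishing of $H^{q}_{\Sh}(G^{n},\underline{A})$ for $q\geq 1$ does not follow from a textbook fine-sheaf argument; one must combine the (smooth) paracompactness of $G^{n}$ with the (smooth) contraction of $A$ to inductively trivialise obstruction classes, as in the cited argument of Neeb. Once acyclicity is in hand, the remaining steps — degeneration of the spectral sequence via Corollary~\ref{cor:acyclic_sheaves_give_augmentation_row_cohomology} and matching the two differentials — are essentially bookkeeping.
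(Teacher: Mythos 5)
Your proposal is correct and follows essentially the same route as the paper: the sheaf $\underline{A}$ of continuous (respectively smooth) $A$-valued functions is soft when $A$ is (smoothly) contractible --- sections over closed subsets extend using a bump function supplied by (smooth) paracompactness together with the contraction of $A$ --- hence acyclic on paracompact spaces by \cite[Thm.\ II.9.11]{Bredon97Sheaf-theory}, and then Corollary \ref{cor:acyclic_sheaves_give_augmentation_row_cohomology} collapses the spectral sequence \eqref{eqn:spectral_sequence} onto the first column, whose complex is $(C^{\bullet}_{\globc}(G,A),\dgp)$ (respectively $(C^{\bullet}_{\globs}(G,A),\dgp)$). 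The only cosmetic difference is that you phrase the acyclicity step via torsor/cocycle trivialization in the style of Neeb, where the paper invokes softness directly; both reduce to the same extension-from-closed-subsets argument.
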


\begin{proof}
 In the case of contractible $A$ the sheaves $\underline{A}$ are soft
 and thus acyclic on paracompact spaces \cite[Thm.\
 II.9.11]{Bredon97Sheaf-theory}. The first claim thus follows from
 Corollary \ref{cor:acyclic_sheaves_give_augmentation_row_cohomology}.
 In the smooth case, the requirements are necessary to have the softness
 of the sheaf of smooth $A$-valued functions on each $G^{k}$ as well,
 since we then can extend sections from closed subsets (cf.\
 \eqref{eqn:sections_on_closed_subsets}) by making use of smooth
 partitions of unity.
\end{proof}

\begin{remark}
 The requirement on $G^{\ktimes[n]}$ to be paracompact for each
 $n\geq 1$ is for instance fulfilled if $G$ is metrizable, since then
 $G^{\ktimes[n]}=G^{\ptimes[n]}$ is so and metrizable spaces are
 paracompact. If $G$ is, in addition, a smoothly paracompact Lie group,
 then \cite[Cor.\
 16.17]{KrieglMichor97The-Convenient-Setting-of-Global-Analysis} shows
 that $G^{\mtimes[n]}$ is also smoothly paracompact.

 However, metrizable topological groups are not the most general
 compactly generated topological groups that can be of interest. Any
 $G$ that is a CW-complex has the property that $G^{\ktimes[n]}$
 is a CW-complex and thus is in particular paracompact.
\end{remark}

We now introduce a second important sheaf on $BG_{\bullet}$.

\begin{remark}
 For an arbitrary pointed topological space $(X,x)$ and an abelian
 topological group $A$, we denote by $A^{\locc}_{X}$ the sheaf
 \begin{equation*}
  U\mapsto \begin{cases}
  C_{\loc}(U,A)
  &\text{if }x\in U\\
  \Map(U,A) 	&\text{if }x\notin U
  \end{cases}
 \end{equation*}
 and call it the \emph{locally continuous sheaf} on $X$. If $X$ is a
 manifold and $A$ an abelian Lie group, then we similarly set
 \begin{equation*}
  A^{\locs}_{X}(U)=	\begin{cases}
  C_{\loc}^{\infty}(U,A)
  &\text{if }x\in U\\
  \Map(U,A) 	&\text{if }x\notin U.
  \end{cases}
 \end{equation*}
 Obviously, these sheaves have the sheaves of continuous functions
 $\underline{A}$ and of smooth functions $\underline{A}^{\sm}$ as sub sheaves.

 As in Remark \ref{rem:simplicail_sheaf_of_continuous_functions}, the
 sheaves $A^{\locc}_{G^{k}}$ assemble into a sheaf $A^{\bullet}_{\locc}$
 on $BG_{\bullet}$. Likewise, if $G$ is a Lie group and $A$ is smooth,
 the sheaves $A^{\locs}_{G^{k}}$ assemble into a sheaf
 $A^{\bullet}_{\locs}$ on $BG^{\infty}_{\bullet}$.
\end{remark}

We learned the importance of the following fact from
\cite{Pries09Smooth-group-cohomology}.

\begin{proposition}
 If $X$ is regular, then $A^{\locc}_{X}$ and $A^{\locs}_{X}$ are soft
 sheaves. In particular, these both sheaves are acyclic if $X$ is paracompact.
\end{proposition}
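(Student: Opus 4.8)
The plan is to show that $A^{\locc}_X$ and $A^{\locs}_X$ are soft sheaves on a regular space $X$, i.e., that every section over a closed subset extends to a global section; acyclicity on paracompact $X$ then follows from \cite[Thm.\ II.9.11]{Bredon97Sheaf-theory}, exactly as invoked in the proof of Proposition \ref{prop:simp=glob_for_contractible_coefficients}. Recall that a section of $A^{\locc}_X$ over a closed set $Z\se X$ is represented by an open set $V\supseteq Z$ together with a function $f\from V\to A$ which, \emph{if} $x\in V$, is continuous on some neighborhood of $x$ (and is merely a set map away from the base point). So the task is: given such an $(V,f)$, produce a function $\wt f\from X\to A$ lying in $A^{\locc}_X(X)=C_{\loc}(X,A)$ and agreeing with $f$ near $Z$.

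The key observation is that softness of $A^{\locc}_X$ should be almost formal, because $A^{\locc}_X$ differs from the honest sheaf of continuous functions only in a neighborhood of the single base point $x$, and away from $x$ its sections are \emph{arbitrary} maps — and the sheaf of arbitrary $A$-valued maps is trivially soft (just extend by an arbitrary constant). The one place where a genuine argument is needed is around $x$. I would split into two cases. If $x\notin Z$: choose, using regularity of $X$, disjoint open sets $U\ni x$ and $W\supseteq Z$ with $\ol W\cap U=\emptyset$; then extend $f|_{W\cap V}$ to all of $X$ by declaring it constant (say $0$) outside $W$ — the result is a set map that is continuous on the neighborhood $U$ of $x$ (being constant there), hence lies in $C_{\loc}(X,A)$, and it agrees with $f$ on the neighborhood $W\cap V$ of $Z$. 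If $x\in Z$: since $f$ is continuous on some neighborhood $N$ of $x$ with $N\se V$, pick by regularity an open $U$ with $x\in U\se\ol U\se N$; define $\wt f$ to equal $f$ on $V$ and (arbitrarily) $0$ on $X\setminus\ol U$ — these agree on the overlap issue only if we are careful, so instead: keep $f$ on all of $V$ and extend by $0$ on $X\setminus V$; the resulting map is continuous on $U$ (where it equals $f|_U$, which is continuous since $U\se N$), hence in $C_{\loc}(X,A)$, and it restricts to $f$ on $V\supseteq Z$. In both cases the extension is immediate once the base-point neighborhood is handled.

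For the smooth sheaf $A^{\locs}_X$ the argument is formally identical: replace ``continuous on a neighborhood of $x$'' by ``smooth on a neighborhood of $x$''. Note that no partition of unity or gluing is required, precisely because the only constraint on a section of $A^{\locs}_X$ is local \emph{at the single point $x$}, and elsewhere the section is an unconstrained map; so we never need to interpolate between $f$ and the trivial extension over a region where both must be continuous/smooth — we simply let the extension be $f$ on all of $V$ and arbitrary (constant) off $V$, using regularity only to ensure a base-point neighborhood sitting inside the region where $f$ is already nice. This is the step that could look like an obstacle — reconciling ``extend by an arbitrary map'' with ``must be continuous/smooth near $x$'' — but it dissolves once one realizes the two requirements live in disjoint parts of $X$ (or, when $x\in Z$, that $f$ itself is already good near $x$). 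Finally, with softness established, \cite[Thm.\ II.9.11]{Bredon97Sheaf-theory} gives that soft sheaves on paracompact spaces are acyclic, completing the proof.
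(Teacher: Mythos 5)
Your proof is correct and follows essentially the same route as the paper's: extend the representative arbitrarily away from the base point, and handle the base point either by using that the section is already continuous (resp.\ smooth) there or by invoking regularity to separate it from the closed set and make the extension constant near it, then quote \cite[Thm.\ II.9.11]{Bredon97Sheaf-theory} for acyclicity. The only cosmetic difference is that you split cases according to whether the base point lies in the closed set, while the paper splits according to whether it lies in the open domain of the representative; the two constructions are the same.
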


\begin{proof}
 In order to show that $A^{\locc}_{X}$ is soft we have to show that
 sections extend from closed subsets. Let $C\se X$ be closed and
 \begin{equation}\label{eqn:sections_on_closed_subsets}
  [f]\in A^{\locc}_{X}(C)=\lim_{\overrightarrow{~U~}}A^{\locc}_{X}(U)
 \end{equation}
 be a section over $C$, where the limit runs over all open
 neighborhoods of $C$ (cf.\ \cite[Th.\ II.9.5]{Bredon97Sheaf-theory}).
 Thus $[f]$ is represented by some $f\from U\to A$ for $U$ an open
 neighborhood of $C$. The argument now distinguishes the relative position 
 of the base point $x$ which enters the definition of $A^{\locc}_{X}$ with 
 respect to $U$. 

 If $x\in U$, then we may extend $f$ arbitrarily
 to obtain a section on $X$ which restricts to $[f]$. If $x\notin U$,
 then we choose $V\se X$ open with $C\se V$ and $x\notin \ol{V}$ and
 define $\wt{f}$ to coincide with $f$ on $U\cap V$ and to vanish
 elsewhere. This defines a section on $X$ restricting to $[f]$. This
 argument works for $A^{\locs}_{X}$ as well. Since soft sheaves on
 paracompact spaces are acyclic \cite[Thm.\
 II.9.11]{Bredon97Sheaf-theory}, this finishes the proof.
\end{proof}

Together with Corollary
\ref{cor:acyclic_sheaves_give_augmentation_row_cohomology}, this now implies

\begin{corollary}
If $G^{\ktimes[n]}$ is paracompact for all $n\geq 1$, then
 \begin{equation}\label{eqn:locally_continuous_cohomology_as_simplicial_cohomology}
  H^{n}(BG_{\bullet},A^{\bullet}_{\locc})\cong H^{n}_{\locc}(G,A).
 \end{equation}
 If $G$ is a Lie group and $G^{\ptimes[n]}$ is paracompact for all $n\geq 1$, then
 \begin{equation*}
  H^{n}(BG_{\bullet},A^{\bullet}_{\locs})\cong H^{n}_{\locs}(G,A).
 \end{equation*}
\end{corollary}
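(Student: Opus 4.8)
The plan is to identify the cohomology $H^{n}(BG_{\bullet},A^{\bullet}_{\locc})$ with the cohomology of an explicit cochain complex via Corollary \ref{cor:acyclic_sheaves_give_augmentation_row_cohomology}, and then recognize that complex as the one computing $H^{n}_{\locc}(G,A)$. First I would invoke the previous proposition: since $G^{\ktimes[n]}$ is paracompact (hence regular) for each $n\geq 1$ and $BG_{0}=\op{pt}$ is trivially fine, each sheaf $A^{\locc}_{G^{k}}$ on $BG_{k}$ is soft, and soft sheaves on paracompact spaces are acyclic by \cite[Thm.\ II.9.11]{Bredon97Sheaf-theory}. Thus the sheaf $A^{\bullet}_{\locc}$ on $BG_{\bullet}$ satisfies the hypotheses of Corollary \ref{cor:acyclic_sheaves_give_augmentation_row_cohomology}, which tells us that $H^{n}(BG_{\bullet},A^{\bullet}_{\locc})$ is the cohomology of the complex $(\Gamma(BG_{k},A^{\locc}_{G^{k}}),d)$ with $d^{k}\gamma=\sum_{i=0}^{k}(-1)^{i}D^{k}_{i}{d_{k}^{i}}^{*}(\gamma)$.

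The next step is the identification of the terms and the differential. The global sections $\Gamma(BG_{k},A^{\locc}_{G^{k}})$ are, by definition of the locally continuous sheaf applied to the whole space $G^{\ktimes[k]}$ (which contains the base point, the identity of $G^{k}$), exactly $C_{\loc}(G^{\ktimes[k]},A)=C^{n}_{\locc}(G,A)$ — and $\Gamma(BG_{0},\cdot)=A$. It remains to check that the simplicial differential $d^{k}$ coming from the maps $D^{k}_{i}$ of Remark \ref{rem:simplicail_sheaf_of_continuous_functions} agrees with the group differential $\dgp$ of \eqref{eqn:group_differential}. This is a direct unravelling: the face maps $d^{i}_{k}$ on $BG_{\bullet}$ are (up to the isomorphism $G^{k}\cong G^{k-1}\ktimes G$) the standard ones — multiplying adjacent entries for $0<i<k$ and dropping the first or last entry for $i=0,k$ — and the morphisms $D^{n}_{i}$ are precomposition with these, except $D^{n}_{0}$ which incorporates the module action $g_{0}.f(g_{1},\dots,g_{n})$. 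Taking the alternating sum reproduces \eqref{eqn:group_differential} verbatim, so the two complexes literally coincide and hence so do their cohomologies. The smooth case is identical, replacing $G^{\ktimes[k]}$ by $G^{\mtimes[k]}$, the sheaf $A^{\locc}_{G^{k}}$ by $A^{\locs}_{G^{k}}$, and using paracompactness of $G^{\ptimes[n]}$ (which for a manifold gives paracompactness of $G^{\mtimes[n]}$, so that the softness proposition applies) — note that \emph{no} smooth paracompactness is needed here, since softness of $A^{\locs}_{X}$ was proved using only regularity of $X$, not smooth partitions of unity.

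I expect the only genuine content to be the bookkeeping in the second step, namely verifying that the simplicial sign conventions and the treatment of the $i=0$ face (with its module action twist) match those in the group differential exactly; this is routine but is the one place where an error could creep in. Everything else is a direct application of results already established: the acyclicity from the preceding proposition, the reduction to the augmentation row from Corollary \ref{cor:acyclic_sheaves_give_augmentation_row_cohomology}, and the definitions of $\Gamma$ and of $C^{n}_{\locc}(G,A)$. Accordingly the proof can be stated in a few lines, essentially: ``By the preceding proposition the sheaves $A^{\locc}_{G^{k}}$ are acyclic, so Corollary \ref{cor:acyclic_sheaves_give_augmentation_row_cohomology} applies and identifies $H^{n}(BG_{\bullet},A^{\bullet}_{\locc})$ with the cohomology of $(C^{n}_{\locc}(G,A),\dgp)$; the smooth statement follows mutatis mutandis.''
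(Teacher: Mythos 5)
Your proposal is correct and follows exactly the route the paper takes (which it leaves almost entirely implicit): softness, hence acyclicity, of the sheaves $A^{\locc}_{G^{k}}$ on the paracompact spaces $G^{\ktimes[k]}$ from the preceding proposition, then Corollary \ref{cor:acyclic_sheaves_give_augmentation_row_cohomology} to reduce to the complex of global sections, which is literally $(C^{n}_{\locc}(G,A),\dgp)$. Your closing observation that only plain (not smooth) paracompactness is needed in the smooth case is also exactly the remark the paper makes immediately after the corollary.
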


Note that the second of the previous assertions does not require each $G^{\mtimes[n]}$ to be \emph{smoothly} paracompact, plain paracompactness of the underlying topological space suffices.

\begin{remark}\label{rem:comparison_homomorphisms_from_simpc_to_locc}
 From the isomorphisms
 \eqref{eqn:locally_continuous_cohomology_as_simplicial_cohomology} we
 also obtain natural morphisms
 \begin{equation*}
  H^{n}_{\simpc}(G,A)\to H^{n}_{\locc}(G,A)\quad\text{ and }\quad
  H^{n}_{\simps}(G,A)\to H^{n}_{\locs}(G,A),
 \end{equation*}
 induced from the morphisms of sheaves
 $A^{\bullet}_{\globc}\to A^{\bullet}_{\locc}$ and
 $A^{\bullet}_{\globs}\to A^{\bullet}_{\locs}$ on $BG_{\bullet}$
 and $BG_{\bullet}^{\infty}$.
\end{remark}

\section{\v{C}ech cohomology}

\begin{tabsection}
 In this section, we will explain how to compute the cohomology groups
 introduced in the previous section in terms of \v{C}ech cocycles. This
 will also serve as a first touching point to the locally continuous
 (respectively smooth) cohomology from the first section in degree 2.
 The proof that all these cohomology theories are isomorphic in all
 degrees (under some technical conditions) will have to wait until
 Section \ref{sect:Comparison_Theorem}.
\end{tabsection}

\begin{definition}
 Let $X_{\bullet}$ be a \emph{semi-simplicial space}, i.e., a collection
 of topological spaces $(X_{k})_{k\in\N_{0}}$, together with continuous
 face maps $d_{k}^{i}\from X_{k}\to X_{k-1}$ for $i=0,\ldots,k$ such
 that $d_{k-1}^{i}\op{\circ}d_{k}^{j}=d_{k-1}^{j-1}\op{\circ}d_{k}^{i}$
 if $i<j$. Then a \emph{semi-simplicial cover} (or simply a
 \emph{cover}) of $X_{\bullet}$ is a semi-simplicial space $\mc{U}_{\bullet}$,
 together with a morphism
 $f_{\bullet}\from \mc{U}_{\bullet}\to X_{\bullet}$ of semi-simplicial
 spaces such that
 \begin{equation*}
  \mc{U}_{k}=\coprod _{j\in J_{k}}U^{j}_{k}
 \end{equation*}
 for $(U_{k}^{j})_{j\in J_{k}}$ an open cover of $X_{k}$ and
 $\left.f_{k}\right|_{U_{k}^{j}}$ is the inclusion
 $U_{k}^{j}\hookrightarrow X_{k}$. The cover is called \emph{good} if
 each $(U_{k}^{j})_{j\in J_{k}}$ is a good cover, i.e., all
 intersections $U_{k}^{j_{0}}\cap\ldots\cap U_{k}^{j_{l}}$ are contractible.
\end{definition}

\begin{remark}\label{rem:simplicial_covers}
 It is easy to construct semi-simplicial covers from covers of the $X_k$. In
 particular, we can construct good covers in the case that each $X_{k}$ admits
 good covers, i.e., each cover has a refinement which is a good cover. Indeed,
 given an arbitrary cover $(U^{i})_{i\in I}$ of $X_{0}$, denote $I$ by $J_{0}$
 and the cover by $(U_{0}^{j})_{j\in J_{0}}$. We then obtain a cover of $X_{1}$
 by pulling the cover $(U_{0}^{j})_{j\in J_{0}}$ back along $d_{1}^{0}$,
 $d_{1}^{1}$, $d_{1}^{2}$ and take a common refinement
 $(U_{1}^{j})_{j\in J_{1}}$ of the three covers. By definition, $J_{1}$ comes
 equipped with maps $\varepsilon _{1}^{1,2,3}\from J_{1}\to J_{0}$ such that
 $d_{1}^{i}(U_{1}^{j})\se U_{0}^{\varepsilon _{1}^{i}(j)}$. We may thus define
 the face maps of
 \begin{equation*}
  \mc{U}_{1}:=\coprod _{j\in J_{1}}U^{j}_{1}
 \end{equation*}
 to coincide with $d_{1}^{i}$. In this way we then proceed to arbitrary $k$. In
 the case that each $X_{k}$ admits good covers, we may refine the cover on each
 $X_{k}$ before constructing the cover on $X_{k+1}$ and thus obtain a good
 cover of $X_{\bullet}$.
 
 The previous construction can be made more canonical in the case that
 $X_{\bullet}=BG_{\bullet}$ for a compact Lie group $G$. In this case, there
 exists a bi-invariant metric on $G$, and we set
 \begin{equation*}
  r_{0}:=\sup\{r>0\mid U^{e,r}\text{ is geodesically convex}\},
 \end{equation*}
 where $U^{g,r}$ denotes the open ball around $g\in G$ of radius $r>0$. Then
 $(U^{g,r_{0}})_{g\in G}$ is a good open cover of $G$. Now the triangle inequality
 shows that $U^{g_{1},r_{0}/2}\cdot U^{g_{2},r_{0}/2}=U^{g_{1}g_{2},r_{0}}$, which is
 obviously true for $g_{1}=g_{2}=e$ and thus for arbitrary $g_{1}$ and $g_{2}$
 by the bi-invariance of the metric. Thus
 $(U^{g_{1},r_{0}/2}\times U^{g_{2},r_{0}/2})_{(g_{1},g_{2}) \in G^{2}}$ gives a cover
 of $G^{2}$ compatible with the face maps $d_{1}^{i}\from G^{2}\to G$.
 Likewise,
 \begin{equation*}
  (U^{g_{1},r_{0}/2^{k}}\times \ldots\times U^{g_{k},r_{0}/2^{k}})_{(g_{1},\ldots,g_{k})\in G^{k}}
 \end{equation*}
 gives a cover of $G^{k}$ compatible with the face maps
 $d_{k}^{i}\from G^{k}\to G^{k-1}$. Since each cover of $G^{k}$ consists of
 geodesically convex open balls in the product metric, this consequently
 comprises a canonical good open cover of $BG_{\bullet}$.
\end{remark}

\begin{definition}\label{def:cech_cohomology}
 Let $\mc{U}_{\bullet}$ be a cover of the semi-simplicial space
 $X_{\bullet}$ and ${E}^{\bullet}$ be a sheaf on
 $X_{\bullet}$\footnote{Sheaves on semi-simplicial spaces are defined
 likewise by omitting the degeneracy morphisms.}. Then the
 \emph{\v{C}ech complex} associated to $\mc{U}_{\bullet}$ and
 $E^{\bullet}$ is the double complex
 \begin{equation*}
  \check{C}^{p,q}(\mc{U}_{\bullet},E^{\bullet}):=\prod_{i_{0},\ldots,i_{q}\in I_{p}}
  E^{p}(U_{i_{0},\ldots,i_{q}}),
 \end{equation*}
 where we set, as usual,
 $U_{i_{0},\ldots,i_{q}}:=U_{i_{0}}\cap\ldots\cap U_{i_{q}}$. The two
 differentials
 \begin{equation*}
  d_{h}:=\sum_{i=0}^{p}(-1)^{i+q}D^{p}_{i} \op{\circ} {d_{p}^{i}}^{*} \from \check{C}^{p,q}(\mc{U}_{\bullet},E^{\bullet})\to \check{C}^{p+1,q}(\mc{U}_{\bullet},E^{\bullet})\quad\text{ and }\quad
  d_{v}:=\check{\delta}\from \check{C}^{p,q}(\mc{U}_{\bullet},E^{\bullet})\to \check{C}^{p,q+1}(\mc{U}_{\bullet},E^{\bullet})
 \end{equation*}
 turn $\check{C}^{p,q}(\mc{U}_{\bullet},E^{\bullet})$ into a double
 complex. We denote by
 $\check{H}^{n}(\mc{U}_{\bullet},E^{\bullet})$ the cohomology of the
 associated total complex and call it the \emph{\v{C}ech Cohomology} of
 $E^{\bullet}$ with respect to $\mc{U}_{\bullet}$.
\end{definition}

\begin{proposition}
 Suppose $G^{\ktimes[n]}$ is paracompact for each $n\geq 1$ and that
 $\mc{U}_{\bullet}$ is a good cover of $BG_{\bullet}$\footnote{We may also
 interpret $BG_{\bullet}$ as a semi-simplicial space by forgetting the
 degeneracy maps.}. If $A\xrightarrow{\alpha}B\xrightarrow{\beta}C$ is a short
 exact sequence of $G$-modules in $\cghaus$, then composition with $\alpha$ and
 $\beta$ induces a long exact sequence
 \begin{equation*}
  \cdots\to  \check{H}^{n-1}(\mc{U}_{\bullet},C^{\bullet}_{\globc}) \to
  \check{H}^{n}(\mc{U}_{\bullet},A^{\bullet}_{\globc}) \to
  \check{H}^{n}(\mc{U}_{\bullet},B^{\bullet}_{\globc}) \to
  \check{H}^{n}(\mc{U}_{\bullet},C^{\bullet}_{\globc}) \to
  \check{H}^{n+1}(\mc{U}_{\bullet},A^{\bullet}_{\globc}) \to \cdots.
 \end{equation*}
 Moreover, for each sheaf $E^{\bullet}$ on $BG_{\bullet}$ there is a first
 quadrant spectral sequence with
 \begin{equation*}
  E_{1}^{p,q}\cong \check{H}^{q}(|G|^{\ktimes[p]},E^{p})\Rightarrow \check{H}^{p+q}(\mc{U}_{\bullet},E^{\bullet}).
 \end{equation*}
 In particular, if $A$ is contractible, then
 \begin{equation*}
  \check{H}^{n}(\mc{U}_{\bullet},A^{\bullet}_{\globc})\cong H^{n}_{\globc}(G,A).
 \end{equation*}
\end{proposition}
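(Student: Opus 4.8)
The proposition bundles three claims, and the plan is to reduce all of them to standard homological algebra for the \v{C}ech double complex $\check C^{\bullet,\bullet}(\mc U_\bullet,-)$ of Definition \ref{def:cech_cohomology}. For the long exact sequence I would argue as in Remark \ref{rem:long_exact_coefficient_sequence} and Lemma \ref{eqn:long_exact_coefficient_sequence_in_simplicial_cohom}: the maps $\alpha,\beta$ induce a sequence of double complexes $0\to\check C^{\bullet,\bullet}(\mc U_\bullet,A^\bullet_{\globc})\to\check C^{\bullet,\bullet}(\mc U_\bullet,B^\bullet_{\globc})\to\check C^{\bullet,\bullet}(\mc U_\bullet,C^\bullet_{\globc})\to0$ which I claim is exact in each bidegree. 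In bidegree $(p,q)$ this is the product over tuples $(i_0,\dots,i_q)$ of the sequences $0\to C(U_{i_0\dots i_q},A)\to C(U_{i_0\dots i_q},B)\to C(U_{i_0\dots i_q},C)\to0$, so injectivity on the left and exactness in the middle follow exactly as in Remark \ref{rem:long_exact_coefficient_sequence} from the exactness of $A\to B\to C$ as abstract groups together with the continuous left inverse of $\alpha$ near $\im(\alpha)$ supplied by a local trivialization of the bundle $B\to C$. Passing to total complexes and invoking the associated long exact cohomology sequence then gives the asserted sequence, and naturality is automatic.

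The only delicate point — and the step I expect to be the main obstacle — is surjectivity of $\beta_*$, which is where the \emph{good}ness of $\mc U_\bullet$ enters. A continuous lift $\wt f\in C(U_{i_0\dots i_q},B)$ of a given $f\in C(U_{i_0\dots i_q},C)$ with $\beta\circ\wt f=f$ is exactly a continuous section of the pulled-back principal $A$-bundle $f^*B\to U_{i_0\dots i_q}$. Since the cover is good, $U_{i_0\dots i_q}$ is contractible (and paracompact, being open in $G^{\ktimes[p]}$), hence every principal $A$-bundle over it is trivial and the section exists. Note that, by contrast with Lemma \ref{eqn:long_exact_coefficient_sequence_in_simplicial_cohom}, surjectivity of the sheaf morphism $\underline B^{\cont}\to\underline C^{\cont}$ does not by itself suffice here, since one needs surjectivity on sections over the \emph{fixed} opens $U_{i_0\dots i_q}$; it is precisely contractibility of these opens that provides this.

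The spectral sequence is then just that of the double complex $\check C^{\bullet,\bullet}(\mc U_\bullet,E^\bullet)$, filtered so that the vertical differential $d_v=\check\delta$ is taken first. For each fixed $p$ the column $(\check C^{p,\bullet}(\mc U_\bullet,E^\bullet),\check\delta)$ is by construction the ordinary \v{C}ech complex of the sheaf $E^p$ on $G^{\ktimes[p]}$ relative to the open cover $(U^j_p)_{j\in J_p}$, so its cohomology is $\check H^q(G^{\ktimes[p]},E^p)$, giving the $E_1$-term; since $p,q\ge0$ the spectral sequence is first-quadrant and therefore converges to the cohomology of the total complex, which is $\check H^{p+q}(\mc U_\bullet,E^\bullet)$.

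Finally, for contractible $A$ I would feed $E^\bullet=A^\bullet_{\globc}$ into this spectral sequence. As in the proof of Proposition \ref{prop:simp=glob_for_contractible_coefficients}, contractibility of $A$ makes $\underline A^{\cont}$ soft, hence acyclic, on the paracompact space $G^{\ktimes[p]}$ and likewise on the contractible opens $U_{i_0\dots i_q}$; the \v{C}ech-to-sheaf-cohomology comparison then forces $\check H^q(G^{\ktimes[p]},\underline A^{\cont})$ to vanish for $q>0$ and to equal $C(G^{\ktimes[p]},A)=C^p_{\globc}(G,A)$ for $q=0$. Thus the $E_1$-page is concentrated in the row $q=0$, where the $d_1$-differential is $\sum_{i=0}^p(-1)^iD_i^p\circ{d_p^i}^*$, i.e.\ the group differential $\dgp$ on $A^\bullet_{\globc}$ (cf.\ Remark \ref{rem:simplicail_sheaf_of_continuous_functions} and Corollary \ref{cor:acyclic_sheaves_give_augmentation_row_cohomology}). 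Hence $E_2^{p,0}=H^p_{\globc}(G,A)$ and $E_2^{p,q}=0$ for $q>0$, the spectral sequence degenerates at $E_2$ for degree reasons, and we conclude $\check H^n(\mc U_\bullet,A^\bullet_{\globc})\cong H^n_{\globc}(G,A)$.
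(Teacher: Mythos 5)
Your proof is correct and follows essentially the same route as the paper: a short exact sequence of \v{C}ech double complexes yielding the long exact sequence, the spectral sequence of the double complex whose columns compute $\check{H}^{q}(G^{\ktimes[p]},E^{p})$, and softness of $\underline{A}^{\cont}$ on the paracompact spaces $G^{\ktimes[p]}$ to collapse the $E_{1}$-page onto the row $q=0$ for contractible $A$. The only difference is that you spell out the step the paper leaves implicit, namely that surjectivity of $\beta_{*}$ on sections over the fixed intersections $U_{i_{0}\ldots i_{q}}$ uses the goodness of the cover (contractibility of the intersections) to trivialize the pulled-back principal $A$-bundle and thereby produce continuous lifts.
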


\begin{proof}
 Each short exact sequence $A\to B\to C$ induces a short exact sequence
 of the associated double complexes and thus a long exact sequence
 between the cohomologies of the total complexes. The columns of the
 double complex $\check{C}^{p,q}(\mc{U}_{\bullet},E^{\bullet})$ are just
 the \v{C}ech complexes of the sheaf $E^{p}$ on $G^{p}$ for the open
 cover $\mc{U}_{p}$. Since the latter is good by assumption, the
 cohomology of the columns is isomorphic to the \v{C}ech cohomology of
 $G^{p}$ with coefficients in the sheaf $\underline{A}$.

 If $A$ is contractible, then the sheaf $\underline{A}$ is soft on each
 $G^{\ktimes[n]}$ and thus acyclic. Hence the $E_{1}$-term of the
 spectral sequence is concentrated in the first column. Since
 $E_{1}^{0,q}=C(G^{q},A)$ and the horizontal differential is just the
 standard group differential, this shows the claim.
\end{proof}

\begin{remark}\label{rem:morphism_from_locally_continuous_to_Cech_cohomology}
 For a connected topological group $G$ and a topological $G$-module $A$
 we will now explain how to construct an isomorphism
 $H^{2}_{\loctop}(G,A)\cong \check{H}^{2}(\mc{U}_{\bullet},A_{\globc}^{\bullet})$
 in quite explicit terms (where $\mc{U}_{\bullet}$ now is a good cover
 of the semi simplicial space $(G^{\ptimes[n]})_{n\in\N_{0}}$). To a
 cocycle $f\in C_{\locc}(G\ptimes G,A)$ with $\dgp f=0$ we associate the
 group $A\times_{f}G$ with underlying set $A\times G$ and multiplication
 $(a,g)\cdot (b,h)=(a+g.b+f(g,h),gh)$. Assuming that $U\se G$ is such
 that $\left.f\right|_{U\times U}$ is continuous and $V\se U$ is an open
 identity neighborhood with $V=V^{-1}$ and $V^{2}\se U$, there exists a
 unique topology on $A\times_{f}G$ such that
 $A\times V\hookrightarrow A\times _{f}G$ is an open embedding. In
 particular, $\pr_{2}\from A\times_{f}G\to G$ is a continuous
 homomorphism and $x\mapsto (0,x)$ defines a continuous section thereof
 on $V$. Consequently, $A\times_{f}G\to G$ is a continuous principal
 $A$-bundle.

 The topological type of this principal bundle is classified by a
 \v{C}ech cocycle $\tau(f)$, which can be obtained from the system of
 continuous sections
 \begin{equation*}
  \sigma_{g}\from gV\to A\times_{f}G,\quad x\mapsto (0,g)\cdot
  \sigma(g^{-1}x)=(f(g,g^{-1}x),x),
 \end{equation*}
 the associated trivializations
 $A\times gV\ni (a,x)\mapsto \sigma_{g}(x)\cdot (a,e)=(f(g,g^{-1}x)+x.a,x)\in\pr_{2}^{-1}(gV)$
 and is thus given on the cover $(gV)_{g\in G}$ by
 \begin{equation*}
  \tau(f)_{g_{1},g_{2}}\from g_{1}^{~}V\cap g_{2}^{~}V\to A,
  \quad x\mapsto 
  f(g_{2}^{~},g_{2}^{-1}x)-f(g_{1}^{~},g_{1}^{-1}x)=g_{1}^{~}.f(g_{1}^{-1}g_{2}^{~},g_{2}^{-1}x)-f(g_{1}^{~},g_{1}^{-1}g_{2}^{~}).
 \end{equation*}
 The multiplication
 $\mu\from (A\times_{f}G)\times(A\times_{f}G)\to A\times_{f}G$ may be
 expressed in terms of these local trivializations (although it might
 not be a bundle map in the case of nontrivial coefficients). For this,
 we pull back the cover $(gV)_{g\in G}$ via the multiplication to
 $G\times G$ and take a common refinement of this with the cover
 $(gV\times hV)_{(g,h)\in G\times G}$, over which the bundle
 $(A\times_{f}G)\times(A\times_{f}G)\to G\times G$ trivializes. A direct
 verification shows that $(V_{g,h})_{(g,h)\in g\times G}$ with
 \begin{equation*}
  V_{g,h}:=\{(x,y)\in G\times G:x\in gV, y\in hV,xy\in ghV\}
 \end{equation*}
 and the obvious maps does the job. Expressing $\mu$ in terms of these
 local trivializations, we obtain the representation
 \begin{equation*}
  ((a,x),(b,y))\mapsto
  \big((xy)^{-1}.\big[f(g,g^{-1}x)+a.x+x.f(h,h^{-1}y)+xy.b+f(x,y)-f(gh,(gh)^{-1}xy)\big],xy\big)
 \end{equation*}
 for $(x,y)\in V_{(g,h)}$. Since this is a continuous map
 $A^{2}\times V_{(g,h)}\to A\times V_{gh}$ and since $G$ acts
 continuously on $A$ it follows that
 \begin{equation*}
  \mu(f)_{g,h}\from V_{g,h}\to A,\quad (x,y)\mapsto
  f(g,g^{-1}x)+x.f(h,h^{-1}y)+f(x,y)-f(gh,(gh)^{-1}xy)
 \end{equation*}
 is indeed a continuous map.
 A straightforward computation with the definitions of $d_{v}$, $d_{h}$
 from Definition \ref{def:cech_cohomology} and the definitions of
 $D^{k}_{i}$ from Remark
 \ref{rem:simplicail_sheaf_of_continuous_functions} shows that
 $d_{h}(\tau(f))=d_{v}(\mu(f))$ in this case. Moreover, the cocycle
 identity for $f$ shows that $d_{h}(\mu(f))=0$. Thus $(\mu(f),\tau(v))$
 comprise a cocycle in the total complex of
 $\check{C}^{p,q}(\mc{U}_{\bullet},E^{\bullet})$ if we extend
 $(gV)_{g\in G}$ and $(V_{g,h})_{(g,h\in G\times G)}$ to a cover of
 $BG_{\bullet}$ as described in Remark \ref{rem:simplicial_covers}.

 The reverse direction is more elementary. One associates to a cocycle
 $(\Phi,\tau)$ in the total complex of
 $\check{C}^{p,q}(\mc{U}_{\bullet},E^{\bullet})$ a principal bundle
 $A\to P_{\tau}\to G$ clutched from the \v{C}ech cocycle $\tau$. Then
 $\Phi$ defines a map $P_{\tau}\times P_{\tau}\to P_{\tau}$ (not
 necessarily a bundle map, if $G$ acts nontrivially on $A$) whose
 continuity and associativity may be checked directly in local
 coordinates. Thus $P_{\tau}\to G$ is an abelian extension given by an
 element in $H^{2}_{\locc}(G,A)$. By making the appropriate choices, one
 sees that these constructions are inverse to each other on the nose.
\end{remark}

\section{The Comparison Theorem via soft modules}
\label{sect:Comparison_Theorem}

\begin{tabsection}
 We now describe a method for deciding whether certain cohomology theoreis
 are isomorphic. The usual, and frequently used technique for this is to
 invoke Buchsbaum's criterion
 \cite{Buchsbaum55Exact-categories-and-duality}, which also runs under
 the name universal $\delta$-functor or ``satellites''
 \cite{CartanEilenberg56Homological-algebra,Grothendieck57Sur-quelques-points-dalgebre-homologique,Weibel94An-introduction-to-homological-algebra}.
 The point of this section is that a more natural requirement on the
 various cohomology groups, which can often be checked right away for
 different definitions, implies this criterion. The reader who is
 unfamiliar with these techniques might wish to consult the independent
 Section \ref{sect:universal_delta_functors} before continuing.

 In order to make the comparison accessible, we have to introduce yet
 another definition of cohomology groups $H^{n}_{\SM}(G,A)$ for a
 $G$-module $A$ in $\cghaus$ due to Segal and Mitchison
 \cite{Segal70Cohomology-of-topological-groups}. We give some detail on
 this in Section
 \ref{sect:some_information_on_moore_s_and_segal_s_cohomology_groups};
 for the moment it is only important to recall that
 $A\mapsto H^{n}_{\SM}(G,A)$ is a $\delta$-functor for exact sequences
 of locally contractible $G$-modules that are principal bundles
 \cite[Prop.\ 2.3]{Segal70Cohomology-of-topological-groups} and that for
 contractible $A$, one has natural isomorphisms
 $H^{n}_{\SM}(G,A)\cong H^{n}_{\globc}(G,A)$ \cite[Prop.\
 3.1]{Segal70Cohomology-of-topological-groups}.
\end{tabsection}

\begin{remark}
 In what follows, we will consider a special kind of classifying space
 functor, introduced by Segal in
 \cite{Segal68Classifying-spaces-and-spectral-sequences}. The
 classifying space $BG$ and the universal bundle $EG$ are constructed by
 taking $BG=|BG_{\bullet}|$ (where $|\mathinner{\cdot}|$ denotes the thin (or ordinary)
 geometric realization), and $EG=|EG_{\bullet}|$, where $EG_{\bullet}$
 denotes the simplicial space obtained from the nerve of the pair
 groupoid of $G$. The resulting $EG$ is contractible. The nice thing
 about this construction of $BG$ is that it is functorial and that the
 natural map $E(G\ktimes G)\to EG\ktimes EG$ is a homeomorphism. In
 particular, $EG$ and $BG$ are again abelian groups in $\cghaus$
 provided that $G$ is so.
\end{remark}

\begin{definition}\label{def:segalsCohomology}
 (cf.\ \cite{Segal70Cohomology-of-topological-groups}) On $\kC(G,A)$, we
 consider the $G$-action $(g.f)(x):=g.(f(g^{-1}\cdot x))$\footnote{This really
 is the action one wants to consider, as one sees in \cite[Prop.\
 3.1]{Segal70Cohomology-of-topological-groups}. Some calculations in \cite[Ex.\
 2.4]{Segal70Cohomology-of-topological-groups} seem to use the action
 $(g.f)(x)=f(g^{-1}\cdot x)$, we clarify this in Section
 \ref{sect:some_information_on_moore_s_and_segal_s_cohomology_groups}.}, which
 obviously turns $\kC(G,A)$ into a $G$-module in $\cghaus$. If $A$ is
 contractible, then we call the module $\kC(G,A)$ a \emph{soft module}.
 Moreover, for arbitrary $A$ we set $E_{G}(A):=\kC(G,EA)$\footnote{Note that
 $EA$ is still Hausdorff if $A$ is so, cf.\
 \cite{Seguins-Pazzis10The-geometric-realization-of-a-simplicial-Hausdorff-space-is-Hausdorff}.}
 and $B_{G}(A):=E_{G}(A)/i_{A}(A)$, where
 $i_{A}\from A\hookrightarrow \kC(G,EA)$ is the closed embedding
 $A\hookrightarrow EA$, composed with the closed embedding
 $EA\hookrightarrow \kC(G,EA)$ of constant functions.
\end{definition}

\begin{lemma}\label{lem:global_section_for_BGA}
 The sequence $A\to E_{G}(A)\to B_{G}(A)$ has a local continuous
 section. If $A$ is contractible, then it has a global continuous
 section.
\end{lemma}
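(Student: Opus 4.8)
The plan is to exhibit the required sections directly from the construction of $E_G(A)=\kC(G,EA)$ and $B_G(A)=E_G(A)/i_A(A)$. The key observation is that $B_G(A)$ is a quotient of $\kC(G,EA)$ by the \emph{closed} subgroup $i_A(A)$, so the quotient map $q\from E_G(A)\to B_G(A)$ is an open surjection of groups in $\cat{CGHaus}$, and producing a local continuous section of $A\to E_G(A)\to B_G(A)$ is the same as producing a continuous local section of $q$ near the identity coset (the local triviality of the bundle $A\to E_G(A)\to B_G(A)$ being exactly the existence of such a section, by the discussion in Remark~\ref{rem:long_exact_coefficient_sequence}).

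First I would recall that $EA$ is contractible and that $A\hookrightarrow EA$ is a closed embedding admitting, on some neighborhood $W$ of the basepoint of $EA$, a continuous retraction $r\from W\to A$ onto $A$ — indeed, since $A\hookrightarrow EA$ is a cofibration-type closed embedding of abelian groups (the simplicial model $EA_\bullet$ deformation retracts onto the constant simplex), one gets a continuous local left inverse $r$ to $i_A$ near the identity. Then I would transport this to the function spaces: the set $\mc{O}:=\{f\in\kC(G,EA)\mid f(e)\in W\}$ is an open identity neighborhood in $E_G(A)$, and $q|_{\mc O}$ is open onto an identity neighborhood $\ol{\mc O}$ in $B_G(A)$. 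A continuous section $s\from \ol{\mc O}\to E_G(A)$ of $q$ can be built by choosing, for each class, the representative normalized so that its value at $e$ lies in $A\subseteq W$: concretely, given $[f]$ with representative $f\in\mc O$, set $s([f]):=f-i_A(r(f(e)))\in\kC(G,EA)$; this is well-defined (adding an element of $i_A(A)$ to $f$ changes $f(e)$ by that same element of $A$, and $r$ is the identity on $A$, so the correction cancels), it lands in $E_G(A)$, it satisfies $q\circ s=\id$, and its continuity follows from continuity of evaluation $f\mapsto f(e)$ on $\kC(G,EA)$ together with continuity of $r$ and of subtraction — all valid in $\cat{CGHaus}$. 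Pulling $s$ back to $A\to E_G(A)\to B_G(A)$ gives the desired local continuous section.

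For the second assertion, when $A$ is contractible I would argue that $EA$ is \emph{equivariantly} contractible onto $A$ in a way that respects the group structure, so that the local retraction $r$ above extends to a \emph{global} continuous retraction $R\from EA\to A$ with $R\circ i_A=\id_A$ (the point being that $EA$, built functorially from $A$, inherits from a contraction of $A$ a contraction onto the constant-simplex copy of $A$). Then the same formula $s([f]):=f-i_A(R(f(e)))$ — now with $R$ replacing the locally-defined $r$, hence defined for \emph{all} $f\in\kC(G,EA)$ — gives a globally defined continuous section of $q$, and hence of $A\to E_G(A)\to B_G(A)$.

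The main obstacle I anticipate is the first step: producing the continuous local retraction $r\from W\to A$ of the closed embedding $A\hookrightarrow EA$ and, in the contractible case, upgrading it to a global retraction $R$. This is where one must use the specific simplicial model of $EA$ from Segal's construction (the nerve of the pair groupoid) and the fact that in $\cat{CGHaus}$ the geometric realization of a simplicial abelian group is again an abelian group with the expected homotopy-theoretic properties; the compatibility of the contraction with the abelian group structure (so that $r$, $R$ are group-theoretically well-behaved enough for the cancellation in $s$ to work) is the delicate point. Everything after that — openness of $q$, continuity of evaluation, the well-definedness cancellation — is routine in the compactly generated setting and I would not belabor it. I expect the paper to handle the retraction via its later Section~\ref{sect:some_information_on_moore_s_and_segal_s_cohomology_groups}, where the details of $EG$ and $EA$ are spelled out.
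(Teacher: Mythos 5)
Your overall strategy --- pull everything back along evaluation at $e\in G$ so that the lemma reduces to the corresponding statement for $A\to EA\to EA/i(A)$ --- is sound and is essentially what underlies the reference the paper invokes: the paper's own proof is nothing but the citation of Segal's Prop.~2.1 (local case) and App.~(B) (global case), and Segal's Prop.~2.1 is in turn deduced from the local triviality of $A\to EA\to BA$. The reduction step itself is fine: $\op{ev}_{e}\from\kC(G,EA)\to EA$ is continuous in $\cat{CGHaus}$, the quotient map $q$ is open, and a continuous map on your open set $\mc{O}$ that is constant on the fibres of $q$ descends continuously because $q|_{\mc{O}}$ is an open surjection onto its image.

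The gap is in the well-definedness of $s([f]):=f-i_{A}(r(f(e)))$. Your justification --- ``$r$ is the identity on $A$, so the correction cancels'' --- does not suffice. Replacing $f$ by $f+i_{A}(a)$ replaces $f(e)$ by $f(e)+i(a)$, and the cancellation requires $r(w+i(a))=r(w)+a$ for every $w\in W$ and every $a$ with $w+i(a)\in W$; the condition $r\op{\circ}i=\id_{A}$ constrains $r$ only on the subspace $i(A)$ and says nothing about how $r$ interacts with translation by $A$ off that subspace. What you actually need is that $W$ is a saturated ($A$-invariant) neighbourhood and that $r$ is the fibre coordinate of a local trivialization of the principal $A$-bundle $EA\to EA/i(A)$ --- i.e.\ precisely the local triviality of $A\to EA\to BA$. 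A ``cofibration-type'' neighbourhood retraction onto a closed subgroup exists in great generality but need not be translation-equivariant, so it does not yield this; the equivariant $r$ is exactly Segal's Prop.~A.1, and it is where the standing hypothesis that $A$ be \emph{locally contractible} enters, which your argument never uses. The same issue recurs in the global case: you need a global trivialization $R$ with $R(w+i(a))=R(w)+a$, which is the content of Segal's App.~(B) for contractible $A$ and not a formal consequence of functoriality of $E$. In short, the evaluation trick correctly reduces the lemma to the (local, resp.\ global) triviality of $A\to EA\to BA$, but that triviality is the entire content of the lemma; your proposal does not supply it, and the cancellation argument as written would fail for a general retraction $r$.
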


\begin{proof}
 The first claim is contained in \cite[Prop.\
 2.1]{Segal70Cohomology-of-topological-groups}, the second follows from
 \cite[App.\ (B)]{Segal70Cohomology-of-topological-groups}.
\end{proof}

\begin{proposition}\label{prop:soft_modules_are_acyclic_for_globcont}
 Soft modules are acyclic for the globally continuous group
 cohomology, i.e., $H^{n}_{\globc}(G,\kC(G,A))$ vanishes for
 contractible $A$ and $n\geq 1$.
\end{proposition}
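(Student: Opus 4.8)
The plan is to prove that $H^{n}_{\globc}(G, \kC(G,A))$ vanishes for contractible $A$ and $n \geq 1$ by exhibiting an explicit contracting homotopy on the cochain complex $C^{\bullet}_{\globc}(G, \kC(G,A))$, using the extra coordinate provided by the "soft" structure. The key observation is that a cochain $f \in C^{n}_{\globc}(G, \kC(G,A)) = C(G^{\ktimes[n]}, \kC(G,A))$ can, via the exponential law in $\cat{CGHaus}$, be regarded as a continuous function $\hat f \colon G^{\ktimes[n+1]} \to A$ (with the last variable being the one recording the value in $\kC(G,A)$). Under this identification the group differential $\dgp$ acts, and one builds a contracting homotopy $h \colon C^{n}_{\globc}(G,\kC(G,A)) \to C^{n-1}_{\globc}(G,\kC(G,A))$ essentially by "using up" that extra variable: the standard trick is to set, for $f$ with associated $\hat f \colon G^{n+1}\to A$,
\[
 (h f)(g_{1},\ldots,g_{n-1})(x) := \hat f(x, g_{1}, \ldots, g_{n-1})(\text{read in the right slot}),
\]
i.e.\ one plugs the $\kC(G,A)$-variable into the first group slot. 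A direct computation with the formula \eqref{eqn:group_differential} for $\dgp$ then shows $h \circ \dgp + \dgp \circ h = \id$ on cochains of positive degree, which forces $H^{n}_{\globc}(G,\kC(G,A)) = 0$ for $n \geq 1$.

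First I would carefully set up the exponential-law identification $C(G^{\ktimes[n]}, \kC(G,A)) \cong C(G \ktimes G^{\ktimes[n]}, A) \cong C(G^{\ktimes[n+1]}, A)$, which is valid precisely because we are working in $\cat{CGHaus}$ and using the compactly generated product and mapping space $\kC(-,-)$; this is where the hypothesis that everything lives in $\cat{CGHaus}$ is essential. Next I would write down how the $G$-action on $\kC(G,A)$ from Definition \ref{def:segalsCohomology}, namely $(g.f)(x) = g.f(g^{-1}x)$, translates into the "diagonal" action on the extra variable, and consequently how $\dgp$ looks in the $\hat f$ picture. Then I would define the homotopy operator $h$ explicitly, compute $\dgp h \hat f + h \dgp \hat f$ term by term, and watch the telescoping cancellation: the $i=0$ term of one summand and the contribution from the extra slot cancel appropriately, leaving just $\hat f$. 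This is the standard "homogeneous resolution" / "extra degeneracy" argument, here made concrete via the soft structure; an alternative and perhaps cleaner route is to observe that $\kC(G,A)$, as a $G$-module, is (co)induced from the trivial group in an appropriate sense, and that induced modules are acyclic for group cohomology by a Shapiro-type argument — but for topological group cochains one still has to check the continuity of the contracting homotopy by hand, so the explicit approach is probably the most transparent.

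The main obstacle I anticipate is not the algebra of the contracting homotopy — that cancellation is a well-worn computation — but rather the bookkeeping of \emph{continuity}: one must verify that each $h f$ is again a genuinely continuous cochain, i.e.\ that the map $G^{\ktimes[n]} \to \kC(G,A)$ one writes down is continuous for the compactly generated topology, and that the various currying/uncurrying steps are continuous. This requires being careful that the product topologies involved are compactly generated (so that the exponential law applies and composites of continuous maps are continuous), and that the $G$-action map $G \ktimes A \to A$ is continuous, which it is by the standing assumptions. A secondary subtlety is that the statement involves $\kC(G,A)$ as the coefficient module, so the "extra variable" that makes the homotopy work is built into the coefficients from the start — one should emphasize that contractibility of $A$ is used only to the extent that $\kC(G,A)$ is then what the paper calls a soft module (and the notation $H^{n}_{\globc}(G,\kC(G,A))$ makes sense), while the vanishing itself is really a formal consequence of $\kC(G,A) = \kC(G, A)$ being of "induced" type. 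I would close by noting that this acyclicity, together with Lemma \ref{lem:global_section_for_BGA} and the long exact sequence for $H^{n}_{\globc}$ from Remark \ref{rem:globally_continuous_cohomology}, is exactly what is needed to feed the Segal–Mitchison resolution into the $\delta$-functor machinery of Section \ref{sect:Comparison_Theorem}.
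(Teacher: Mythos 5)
Your proposal is correct in substance, but it takes a different route from the one the paper actually writes out. The paper's proof is a pointer to \cite[Prop.\ 2.2]{Segal70Cohomology-of-topological-groups} together with the detailed argument in Section \ref{sect:some_information_on_moore_s_and_segal_s_cohomology_groups}: there, acyclicity is obtained \emph{not} by a contracting homotopy on the inhomogeneous complex, but by (i) showing that the invariants functor is exact on short exact sequences whose kernel is a soft module (the explicit splitting $\xi(h):=h.\psi(h^{-1},e)$), (ii) observing that $B'_{G}$ of a soft module is again soft, and (iii) running a double-complex/dimension-shifting argument through the Segal--Mitchison resolution, combined with the identification $H^{n}_{\SM}(G,A')\cong H^{n}_{\globc}(G,A')$ for contractible $A'$. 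Your approach --- curry via the exponential law in $\cat{CGHaus}$ and write down the Shapiro-type contracting homotopy for the coinduced module $\kC(G,A)$ --- is more direct and self-contained for this single proposition, and you correctly identify the two real issues: continuity of the homotopy operator (handled by the exponential law, which is exactly why the compactly generated setting is needed) and the twist in the action from Definition \ref{def:segalsCohomology}. On the latter point, be aware that your displayed formula ``plug the extra variable into the first group slot'' is not yet correct as stated: because the action is $(g.\phi)(x)=g.(\phi(g^{-1}x))$ rather than plain translation, the homotopy must be composed with the untwisting $\phi\mapsto\bigl(x\mapsto x^{-1}.\phi(x)\bigr)$ before evaluating at the identity --- this is precisely the operator $h\mapsto h.\psi(h^{-1},e)$ that the paper uses in degree zero. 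You flag this yourself, so it is a bookkeeping matter rather than a gap. The trade-off between the two routes: yours proves the proposition in one stroke; the paper's yields along the way the exactness statements and the comparison $H^{n}_{\SM}\cong H^{n}_{\globc}$ that are reused in the proof of the Comparison Theorem.
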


\begin{proof}
 This is already implicitly contained in \cite[Prop.\
 2.2]{Segal70Cohomology-of-topological-groups}. See also \cite[Prop.\
 17]{Pries09Smooth-group-cohomology} and Section
 \ref{sect:some_information_on_moore_s_and_segal_s_cohomology_groups}.
\end{proof}

\begin{tabsection}
 The following theorem now shows that all cohomology
 theories considered so far are in fact isomorphic, at least if the
 topology of $G$ is sufficiently well-behaved.
\end{tabsection}

\begin{theorem}[Comparison Theorem]\label{thm:comparison_theorem}
 Let $\cat{G-Mod}$ be the category of locally contractible $G$-modules
 in $\cghaus$. We call a sequence
 $A\xrightarrow{\alpha} B\xrightarrow{\beta} C$ in $\cat{G-Mod}$ short
 exact if the underlying exact sequence of abelian groups is short exact
 and $\alpha$ (or equivalently $\beta$) has a local continuous section.
 If $(H^{n}\from\cat{G-Mod}\to\cat{Ab})_{n\in\N_{0}}$ is a
 $\delta$-functor such that
 \begin{enumerate}
        \renewcommand{\labelenumi}{\theenumi}
        \renewcommand{\theenumi}{\arabic{enumi}.}
  \item \label{eqn:comparison_theorem1} $H^{0}(A)=A^{G}$ is the
        invariants functor
  \item \label{eqn:comparison_theorem3} $H^{n}(A)=H^{n}_{\globc}(G,A)$
        for contractible $A$,
 \end{enumerate}
 then $(H^{n})_{n\in\N_{0}}$ is equivalent to
 $(H^{n}_{\SM}(G,\mathinner{\:\cdot\:}))_{n\in\N_{0}}$ as
 $\delta$-functor. Moreover, each morphism between $\delta$-functors
 with properties \ref{eqn:comparison_theorem1} and
 \ref{eqn:comparison_theorem3} that is an isomorphism for $n=0$ is
 automatically an isomorphism of $\delta$-functors.
\end{theorem}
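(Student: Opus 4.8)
The plan is to apply the standard machinery of effaceable $\delta$-functors (Buchsbaum's criterion, reviewed in Section \ref{sect:universal_delta_functors}): a $\delta$-functor which vanishes in positive degrees on enough ``effaceable'' objects is universal, hence determined up to canonical isomorphism by its degree-zero part. The category $\cat{G-Mod}$ here has enough injectives in the relevant weak sense because every locally contractible $G$-module $A$ embeds, via $i_A$, into the soft module $E_G(A) = \kC(G,EA)$ through a short exact sequence $A \to E_G(A) \to B_G(A)$ which has a local continuous section by Lemma \ref{lem:global_section_for_BGA}, so it is a genuine short exact sequence in the sense of the theorem. Since $EA$ is contractible, $E_G(A)$ is a soft module, i.e.\ contractible, so by axiom \ref{eqn:comparison_theorem3} one has $H^n(E_G(A)) = H^n_{\globc}(G, E_G(A))$, and this vanishes for $n \geq 1$ by Proposition \ref{prop:soft_modules_are_acyclic_for_globcont}. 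Thus the soft modules form a class of acyclic, embedding objects for any $\delta$-functor satisfying the two axioms.

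First I would spell out the dimension-shifting argument. Given the embedding $A \to E_G(A) \to B_G(A)$, the long exact sequence of the $\delta$-functor $(H^n)$, together with the vanishing $H^n(E_G(A)) = 0$ for $n \geq 1$, yields isomorphisms $H^{n}(B_G(A)) \cong H^{n+1}(A)$ for $n \geq 1$ and an exact sequence $H^0(E_G(A)) \to H^0(B_G(A)) \to H^1(A) \to 0$ in low degrees. The same applies verbatim to $(H^n_{\SM}(G,\cdot))$, which is a $\delta$-functor for exactly this class of short exact sequences by \cite[Prop.\ 2.3]{Segal70Cohomology-of-topological-groups} and which satisfies both axioms (axiom \ref{eqn:comparison_theorem1} is part of its construction, axiom \ref{eqn:comparison_theorem3} is \cite[Prop.\ 3.1]{Segal70Cohomology-of-topological-groups}). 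So both $\delta$-functors are computed by the same effacement procedure out of their degree-zero parts, which agree (both equal the invariants functor $A^G$). An induction on $n$, using naturality of the connecting maps, then produces compatible natural isomorphisms $H^n(A) \cong H^n_{\SM}(G,A)$ for all $n$ and all $A \in \cat{G-Mod}$.

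For the ``moreover'' clause, I would argue the same way: given a morphism $\Phi \colon (H^n) \to (K^n)$ of $\delta$-functors, both satisfying the axioms, with $\Phi^0$ an isomorphism, one shows $\Phi^n$ is an isomorphism by induction. The inductive step uses the morphism of long exact sequences attached to $A \to E_G(A) \to B_G(A)$: the terms $H^n(E_G(A))$ and $K^n(E_G(A))$ vanish for $n \geq 1$, and $\Phi$ is an isomorphism on the $E_G(A)$ terms in degree $0$ (as a sub-case of the inductive hypothesis, since $E_G(A) \in \cat{G-Mod}$) and on the $B_G(A)$ terms in degree $n-1$; the Five Lemma then forces $\Phi^n$ to be an isomorphism on $A$. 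One must check this does not depend on the choice of embedding, which follows from naturality of the functorial embedding $A \mapsto E_G(A)$.

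The main obstacle I anticipate is not any single estimate but the bookkeeping needed to verify that $E_G(A)$ and $B_G(A)$ actually lie in $\cat{G-Mod}$ (i.e.\ are locally contractible) and that the embedding is functorial in a way compatible with the $\delta$-structure — in particular that the connecting homomorphisms are natural with respect to $A \mapsto E_G(A)$, which is what lets the induction go through uniformly in $A$. Local contractibility of $B_G(A) = E_G(A)/i_A(A)$ should follow from the local section in Lemma \ref{lem:global_section_for_BGA} together with local contractibility of $E_G(A)$ (which is contractible, being a soft module), but this needs the compactly-generated topology to behave well under the quotient, which is where the conventions on $\cat{CGHaus}$ established at the outset become essential. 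Once these points are in place, the comparison is a formal consequence of the universality of effaceable $\delta$-functors.
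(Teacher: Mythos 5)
Your overall strategy is the paper's: embed every $A$ into the soft module $E_{G}(A)=\kC(G,EA)$ via the short exact sequence $A\to E_{G}(A)\to B_{G}(A)$ of Lemma \ref{lem:global_section_for_BGA}, use axiom \ref{eqn:comparison_theorem3} together with Proposition \ref{prop:soft_modules_are_acyclic_for_globcont} to kill $H^{n\geq 1}(E_{G}(A))$, and dimension-shift. The ``moreover'' clause via the Five Lemma applied to the morphism of long exact sequences of $A\to E_{G}(A)\to B_{G}(A)$ is also exactly how the paper gets it (part \ref{item:moores_comparison_theorem_1} of Theorem \ref{thm:moores_comparison_theorem}).

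However, there is a genuine gap in your construction of the equivalence itself. The dimension-shifting induction defines $\varphi^{n+1}_{A}$ using only the distinguished sequence $A\to E_{G}(A)\to B_{G}(A)$, and what you get for free is commutation of $\varphi$ with the connecting maps of \emph{those} sequences. To have a morphism of $\delta$-functors you must check commutation with $\delta_{n}$ for an \emph{arbitrary} short exact sequence $A\xrightarrow{f}B\to C$ in $\cat{G-Mod}$; ``naturality of the connecting maps'' does not give this, because naturality only applies once you have exhibited a morphism of short exact sequences relating $A\to B\to C$ to one on which $\varphi$ is already known to be compatible. In an abelian category with functorial injective resolutions this step is routine, but $\cat{G-Mod}$ with these short exact sequences is not abelian, which is precisely why Theorem \ref{thm:moores_comparison_theorem} has a second part with the hypotheses \eqref{eqn:moores_comparison_theorem}. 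The paper's proof fills this in explicitly: it sets $Q_{f}:=E_{G}(B)/f(A)$, checks that $A\to E_{G}(B)\to Q_{f}$ is short exact (using the local sections of $f$ and of $B\to E_{G}(B)$), and constructs $\beta_{f}\from B_{G}(A)\to Q_{f}$ induced by $f_{*}\from E_{G}(A)\to E_{G}(B)$ and $\gamma_{f}\from C\cong B/f(A)\to Q_{f}$ induced by $\iota_{B}$, so that the two squares of \eqref{eqn:moores_comparison_theorem} commute. Without this (or an equivalent device) your induction produces natural transformations $\varphi^{n}$ but does not establish that they constitute a morphism, hence an isomorphism, of $\delta$-functors. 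Your closing concern about local contractibility of $E_{G}(A)$ and $B_{G}(A)$ is legitimate but minor by comparison; it is handled by \cite[Prop.\ A.1]{Segal70Cohomology-of-topological-groups} and the local section of Lemma \ref{lem:global_section_for_BGA}.
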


\begin{proof}
 The functors $I(A):=E_{G}(A)$ and $U(A):=B_{G}(A)$ make Theorem
 \ref{thm:moores_comparison_theorem} applicable. To check the
 requirements of the first part, we have to show that
 $H^{n\geq 1}(E_{G}(A))$ vanishes, which in turn follows from property
 \ref{eqn:comparison_theorem3} and Proposition
 \ref{prop:soft_modules_are_acyclic_for_globcont}.

 To check the requirements of the second part of Theorem
 \ref{thm:moores_comparison_theorem} we observe that if $f\from A\to B$
 is a closed embedding with a local continuous section, then $f(A)$ is
 also closed in $E_{G}(B)$ and thus we may set $Q_{f}:=E_{G}(B)/f(A)$.
 The local sections of $f\from A\to B$ and $B\to E_{G}(B)$ then also
 provide a section of the composition $A\to E_{G}(B)$, and
 $A\to E_{G}(B)\to Q_{f}$ is short exact. The morphism
 $B_{G}(A)\to Q_{f}$ can now be taken to be induced by
 $f_{*}\from E_{G}(A)\to E_{G}(B)$, since it maps $A$ to $f(A)$ by
 definition. Likewise, $\iota_{B}$ maps $f(A)\se B$ into
 $f(A)\se E_{G}(B)$, so induces a morphism
 $\gamma_{f}\from B/f(A)\cong C\to Q_{f}=E_{G}(B)/f(A)$. The diagrams
 \eqref{eqn:morphism_of_delta_functors} thus commute by construction.
\end{proof}

\begin{tabsection}
 The property of a $G$-module $A$ to be locally contractible is essential
 for providing a local section of the embedding $A\to E_{G}(A)$
 \cite[Prop.\ A.1]{Segal70Cohomology-of-topological-groups}. We will
 assume this from now on without any further reference.
\end{tabsection}

\begin{remark}
 Property \ref{eqn:comparison_theorem3} of the Comparison Theorem may be
 weakened to
 \begin{equation*}
  H^{n}(A)=H^{n}_{\globc}(G,A)
  \text{ for \emph{loop} contractible }A,
 \end{equation*}
 where loop contractible means that there exists a contracting homotopy
 $\rho \from [0,1]\times A\to A$ such that each $\rho _{t}$ is a group
 homomorphisms for each $t\in [0,1]$.
 
 If this is the case, then one may still apply Theorem
 \ref{thm:moores_comparison_theorem}: We first observe that the abelian group
 $EA$ is loop contractible. In fact, identifying $EA$ with the space of left
 continuous step functions on the unit interval as in \cite[Ex.\
 5.5.]{Fuchssteiner11Cohomology-of-local-cochains} and \cite[Rem.\ on p.\
 217]{BrownMorris77Embeddings-in-contractible-or-compact-objects} one gets an
 explicit function $\rho\from  [0,1]\times EA\to EA$ for which one directly
 sees that $\rho_{0}=*$, $\rho_{1}=\id_{A}$ and each single $\rho_{t}$ is a
 group homomorphism. Now it is important to observe that $\rho$ actually
 coincides with the contracting homotopy of $EA$ as constructed from
 \cite[Prop.\ 2.1]{Segal68Classifying-spaces-and-spectral-sequences}. Thus
 $\rho$ is also continuous and we may conclude that $EA$ is loop contractible,
 although the identification of $EA$ with the aforementioned space of step
 functions may not respect the topology in general. In particular,
 $E_{G}=\kC(G,EA)$ is loop contractible and thus $H^{n\geq 1}(E_{G}(A))$ still
 vanishes. In this case, it is then a consequence of Theorem
 \ref{thm:moores_comparison_theorem} that $H^{n}(A)=H^{n}_{\globc}(G,A)$ for
 \emph{all} contractible modules $A$.
\end{remark}

\begin{corollary}
 If $G^{\ktimes[n]}$ is paracompact for each $n\geq 1$, then
 $H^{n}_{\SM}(G,A)\cong H^{n}_{\simpc}(G,A)$.
\end{corollary}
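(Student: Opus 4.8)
The plan is to deduce this from the axiomatic characterization of Theorem \ref{thm:comparison_theorem}, applied to the sequence of functors $(H^{n}_{\simpc}(G,\mathinner{\:\cdot\:}))_{n\in\N_{0}}$ on the category $\cat{G-Mod}$ of locally contractible $G$-modules in $\cat{CGHaus}$. Thus the only thing to do is to verify that this is a $\delta$-functor (with values in $\cat{Ab}$, which is clear) satisfying properties \ref{eqn:comparison_theorem1} and \ref{eqn:comparison_theorem3} of that theorem; the asserted isomorphism $H^{n}_{\SM}(G,A)\cong H^{n}_{\simpc}(G,A)$ then follows immediately, in fact as an equivalence of $\delta$-functors. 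That $(H^{n}_{\simpc}(G,\mathinner{\:\cdot\:}))_{n}$ is a $\delta$-functor --- with exactly the notion of short exact sequence used in Theorem \ref{thm:comparison_theorem}, namely exactness of the underlying abelian sequence together with a local continuous section --- is the content of Lemma \ref{eqn:long_exact_coefficient_sequence_in_simplicial_cohom}; naturality of the connecting homomorphisms is automatic, since they arise from the long exact cohomology sequence of the spacewise exact short exact sequence of sheaves $\underline{A}^{\cont}\to\underline{B}^{\cont}\to\underline{C}^{\cont}$ on $BG_{\bullet}$.

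Property \ref{eqn:comparison_theorem3} is precisely Proposition \ref{prop:simp=glob_for_contractible_coefficients}: the hypothesis that $G^{\ktimes[n]}$ is paracompact for every $n\geq 1$ is exactly what is needed there to obtain natural isomorphisms $H^{n}_{\simpc}(G,A)\cong H^{n}_{\globc}(G,A)$ for contractible $A$. For property \ref{eqn:comparison_theorem1}, recall that $H^{0}_{\simpc}(G,A)=H^{0}(BG_{\bullet},A^{\bullet}_{\globc})=\Gamma(A^{\bullet}_{\globc})$, which by definition of the section functor is $\ker(D^{1}_{0}-D^{1}_{1})$ computed on the groups of global sections over $BG_{0}$ and $BG_{1}$. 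Since $BG_{0}=\op{pt}$ and $BG_{1}=G$, this is the kernel of a homomorphism $A\to C(G,A)$, and the formulae for the $D^{1}_{i}$ from Remark \ref{rem:simplicail_sheaf_of_continuous_functions} (with $d_{1}^{0},d_{1}^{1}\from G\to\op{pt}$) give $(D^{1}_{0}a)(g)=g.a$ and $(D^{1}_{1}a)(g)=a$; hence $H^{0}_{\simpc}(G,A)=\{a\in A:g.a=a\text{ for all }g\in G\}=A^{G}$, and this identification is clearly natural in $A$.

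With properties \ref{eqn:comparison_theorem1} and \ref{eqn:comparison_theorem3} verified, Theorem \ref{thm:comparison_theorem} yields an equivalence $(H^{n}_{\simpc}(G,\mathinner{\:\cdot\:}))_{n\in\N_{0}}\cong(H^{n}_{\SM}(G,\mathinner{\:\cdot\:}))_{n\in\N_{0}}$ of $\delta$-functors, which in particular gives the claim. I do not expect a genuine obstacle here; the only points that call for a little care are the matching of conventions --- that the $G$-action built into $H^{0}_{\simpc}$ is indeed the one producing $A^{G}$, and that the isomorphism of Proposition \ref{prop:simp=glob_for_contractible_coefficients} is compatible with the standard group differential on $C^{\bullet}_{\globc}(G,A)$ --- together with the naturality and mutual compatibility of the long exact sequences of Lemma \ref{eqn:long_exact_coefficient_sequence_in_simplicial_cohom} with these identifications. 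Since every ingredient is induced by morphisms of complexes of sheaves on $BG_{\bullet}$, this compatibility is formal.
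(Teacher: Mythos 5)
Your proposal is correct and follows exactly the route the paper intends (the corollary is stated without proof, being an immediate consequence of the Comparison Theorem once Lemma \ref{eqn:long_exact_coefficient_sequence_in_simplicial_cohom} supplies the $\delta$-functor structure and Proposition \ref{prop:simp=glob_for_contractible_coefficients} supplies the normalization on contractible coefficients, with $H^{0}_{\simpc}(G,A)=A^{G}$ read off from the section functor). Your verification of the $H^{0}$ axiom via the explicit formulae for $D^{1}_{0}$ and $D^{1}_{1}$ is the only step the paper leaves entirely to the reader, and you carry it out correctly.
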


\begin{corollary}
 If $G^{\ptimes[n]}$ is compactly generated for each $n\geq 1$, then we
 have $H^{n}_{\locc}(G,A)\cong H^{n}_{\SM}(G,A)$\footnote{This is also
 the main theorem in \cite{Pries09Smooth-group-cohomology}, whose proof
 remains unfortunately incomplete.}. If, moreover, each $G^{\ptimes[n]}$
 is paracompact, then the morphisms
 \begin{equation*}
  H^{n}_{\simpc}(G,A)\to H^{n}_{\locc}(G,A),
 \end{equation*}
 from Remark \ref{rem:comparison_homomorphisms_from_simpc_to_locc} are
 isomorphisms.
\end{corollary}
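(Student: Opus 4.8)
The plan is to derive both isomorphisms in the final corollary by combining the Comparison Theorem (Theorem \ref{thm:comparison_theorem}) with the identifications of the various cohomology theories as sheaf cohomology on $BG_\bullet$ established in Sections \ref{sect:locally_smooth_cohomology} and \ref{sect:simplicial_group_cohomology}. The key observation is that $H^n_{\SM}(G,\,\cdot\,)$ is the \emph{universal} $\delta$-functor satisfying the two axioms, so it suffices to check that each of $H^n_{\simpc}(G,\,\cdot\,)$ and $H^n_{\locc}(G,\,\cdot\,)$ is a $\delta$-functor on $\cat{G-Mod}$ agreeing with $H^n_{\globc}(G,A)$ on contractible (or even just loop-contractible) $A$ and with the invariants functor in degree $0$; then the Comparison Theorem produces canonical isomorphisms of $\delta$-functors, and the last clause of the Comparison Theorem upgrades the comparison morphism of Remark \ref{rem:comparison_homomorphisms_from_simpc_to_locc} to an isomorphism once we know it is an isomorphism for $n=0$.

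Concretely, for the first corollary I would argue: when each $G^{\ktimes[n]}$ is paracompact, Lemma \ref{eqn:long_exact_coefficient_sequence_in_simplicial_cohom} shows $A\mapsto H^n_{\simpc}(G,A)$ carries short exact sequences of $G$-modules in $\cat{CGHaus}$ to long exact sequences, hence is a $\delta$-functor on $\cat{G-Mod}$; degree zero is the invariants functor because $\Gamma$ of the simplicial sheaf $A^\bullet_{\globc}$ computes $\ker(D^1_0-D^1_1)=A^G$; and Proposition \ref{prop:simp=glob_for_contractible_coefficients} gives $H^n_{\simpc}(G,A)\cong H^n_{\globc}(G,A)$ for contractible $A$. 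So the hypotheses of the Comparison Theorem are met and $H^n_{\SM}(G,A)\cong H^n_{\simpc}(G,A)$ as $\delta$-functors. For the second corollary, when each $G^{\ptimes[n]}$ is compactly generated we have $G^{\ptimes[n]}=G^{\ktimes[n]}$, so Remark \ref{rem:long_exact_coefficient_sequence} makes $A\mapsto H^n_{\locc}(G,A)$ a $\delta$-functor; Remark \ref{rem:globally_continuous_cohomology} gives $H^0_{\locc}(G,A)=A^G$; and Proposition \ref{prop:loc=glob_for_contractible_coefficients} gives $H^n_{\locc}(G,A)\cong H^n_{\globc}(G,A)$ for loop-contractible $A$, which by the remark following the Comparison Theorem suffices. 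Hence $H^n_{\locc}(G,A)\cong H^n_{\SM}(G,A)$.

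For the final sentence of the corollary, when in addition each $G^{\ptimes[n]}=G^{\ktimes[n]}$ is paracompact, the Corollary preceding it identifies $H^n(BG_\bullet,A^\bullet_{\locc})\cong H^n_{\locc}(G,A)$, and the comparison morphism $H^n_{\simpc}(G,A)\to H^n_{\locc}(G,A)$ of Remark \ref{rem:comparison_homomorphisms_from_simpc_to_locc} is a morphism of $\delta$-functors (it is induced by the sheaf morphism $A^\bullet_{\globc}\to A^\bullet_{\locc}$, which is compatible with the connecting maps coming from short exact sequences of sheaves). Both source and target satisfy axioms \ref{eqn:comparison_theorem1} and \ref{eqn:comparison_theorem3} — the latter because on contractible coefficients both reduce to $H^n_{\globc}(G,A)$ and the comparison morphism is the identity there — and in degree $0$ both are $A^G$ with the map the identity. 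The last clause of the Comparison Theorem then forces this morphism to be an isomorphism of $\delta$-functors.

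The main obstacle I anticipate is verifying that the comparison morphism of Remark \ref{rem:comparison_homomorphisms_from_simpc_to_locc} really is a morphism of $\delta$-functors, i.e.\ that it commutes with the connecting homomorphisms of the two long exact sequences; this amounts to a diagram chase at the level of the double complexes / snake lemma that produces these connecting maps, using that the morphism of sheaves $A^\bullet_{\globc}\to A^\bullet_{\locc}$ is defined uniformly across coefficients and hence commutes with the boundary maps arising from a short exact sequence of sheaves $\underline{A}^{\cont}\to\underline{B}^{\cont}\to\underline{C}^{\cont}$ (which is also exact after passing to the locally continuous sheaves, since the argument of Remark \ref{rem:long_exact_coefficient_sequence} produces the required local sections). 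A secondary point requiring care is that the $\delta$-structure on $H^n_{\simpc}$ and on $H^n_{\locc}$ must be the \emph{natural} one for short exact sequences in $\cat{G-Mod}$ (principal bundles), matching the convention in the Comparison Theorem; this is exactly what Lemma \ref{eqn:long_exact_coefficient_sequence_in_simplicial_cohom} and Remark \ref{rem:long_exact_coefficient_sequence} provide.
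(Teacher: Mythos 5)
Your proposal is correct and follows exactly the route the paper intends: the corollary is stated as an immediate consequence of the Comparison Theorem, and your verification that $H^{n}_{\locc}(G,\:\cdot\:)$ and $H^{n}_{\simpc}(G,\:\cdot\:)$ are $\delta$-functors satisfying the two axioms (using Remark \ref{rem:long_exact_coefficient_sequence}, Lemma \ref{eqn:long_exact_coefficient_sequence_in_simplicial_cohom}, Propositions \ref{prop:loc=glob_for_contractible_coefficients} and \ref{prop:simp=glob_for_contractible_coefficients}, together with the weakening to loop-contractible coefficients noted after the Comparison Theorem) is precisely the implicit argument. Your identification of the remaining point — that the comparison morphism of Remark \ref{rem:comparison_homomorphisms_from_simpc_to_locc} is a morphism of $\delta$-functors, so that the uniqueness clause applies — is also the check the paper relies on (and treats as obvious, since the morphism is induced by a natural morphism of sheaves).
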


\begin{corollary}
 Let $G$ be a finite-dimensional Lie group, $\mathfrak{a}$ be a
 quasi-complete locally convex space on which $G$ acts smoothly,
 $\Gamma\se\mathfrak{a}$ be a discrete submodule and set
 $A=\mathfrak{a}/\Gamma$. Then the natural morphisms
 \begin{equation}\label{eqn:finite-dimensional_G}
  H^{n}_{\simps}(G,A)\to H^{n}_{\locs}(G,A)\to H^{n}_{\locc}(G,A)
  \leftarrow H^{n}_{\simpc}(G,A)
 \end{equation}
 are all isomorphisms.
\end{corollary}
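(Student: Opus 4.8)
The plan is to reduce to results already available on the continuous side, together with Proposition~\ref{prop:locc=locs_in_finite_dimensions} and a smooth analogue of the comparison $H^{n}_{\simpc}\cong H^{n}_{\globc}$ for contractible coefficients. First note that, since $G$ is a finite-dimensional Lie group, each $G^{n}$ is a locally compact, paracompact manifold; hence the three product structures $G^{\mtimes[n]}$, $G^{\ptimes[n]}$ and $G^{\ktimes[n]}$ carry the same topology, all of these spaces are paracompact, and the manifolds $G^{\mtimes[n]}$ are in addition smoothly paracompact. In particular $G$ is metrizable, so the preceding Corollary applies and shows that $H^{n}_{\simpc}(G,A)\to H^{n}_{\locc}(G,A)$ is an isomorphism, while Proposition~\ref{prop:locc=locs_in_finite_dimensions} shows that $H^{n}_{\locs}(G,A)\to H^{n}_{\locc}(G,A)$ is an isomorphism. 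It therefore suffices to prove that $H^{n}_{\simps}(G,A)\to H^{n}_{\locs}(G,A)$ is an isomorphism.

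Since by the above $BG^{\infty}_{\bullet}$ and $BG_{\bullet}$ have the same underlying simplicial topological space, the evident inclusions of sheaves of functions $A^{\bullet}_{\globs}\hookrightarrow A^{\bullet}_{\globc}$, $A^{\bullet}_{\globs}\hookrightarrow A^{\bullet}_{\locs}$, $A^{\bullet}_{\globc}\hookrightarrow A^{\bullet}_{\locc}$ and $A^{\bullet}_{\locs}\hookrightarrow A^{\bullet}_{\locc}$ form a commuting square of sheaves on $BG_{\bullet}$. Passing to sheaf cohomology and using the identifications $H^{n}(BG_{\bullet},A^{\bullet}_{\locs})\cong H^{n}_{\locs}(G,A)$ and $H^{n}(BG_{\bullet},A^{\bullet}_{\locc})\cong H^{n}_{\locc}(G,A)$ from \eqref{eqn:locally_continuous_cohomology_as_simplicial_cohomology}, together with Remark~\ref{rem:comparison_homomorphisms_from_simpc_to_locc}, gives a commuting square
\[
 \begin{array}{ccc}
  H^{n}_{\simps}(G,A) & \xrightarrow{\ \psi\ } & H^{n}_{\simpc}(G,A)\\
  \downarrow & & \downarrow\\
  H^{n}_{\locs}(G,A) & \xrightarrow{\ \sim\ } & H^{n}_{\locc}(G,A)
 \end{array}
\]
whose bottom arrow is the isomorphism of Proposition~\ref{prop:locc=locs_in_finite_dimensions}, whose right arrow is the isomorphism from the preceding Corollary, and whose vertical arrows are the comparison maps of Remark~\ref{rem:comparison_homomorphisms_from_simpc_to_locc}. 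Once the top arrow $\psi$ is shown to be an isomorphism, the left vertical arrow is an isomorphism as well, and then all three maps in \eqref{eqn:finite-dimensional_G} are isomorphisms.

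To see that $\psi$ is an isomorphism I would argue exactly as in the proof of Proposition~\ref{prop:locc=locs_in_finite_dimensions}. In the case $\Gamma=\{0\}$, i.e.\ $A=\mathfrak{a}$, Proposition~\ref{prop:simp=glob_for_contractible_coefficients} (applicable since $\mathfrak{a}$ is a smoothly contractible smooth $G$-module and $G^{\mtimes[n]}$, $G^{\ktimes[n]}$ are as above) identifies $H^{n}_{\simps}(G,\mathfrak{a})$ with $H^{n}_{\globs}(G,\mathfrak{a})$ and $H^{n}_{\simpc}(G,\mathfrak{a})$ with $H^{n}_{\globc}(G,\mathfrak{a})$; under these identifications $\psi$ becomes the map induced by the inclusion of smooth into continuous group cochains, which is an isomorphism by \cite[Thm.~5.1]{HochschildMostow62Cohomology-of-Lie-groups} (the same input already used for Proposition~\ref{prop:locc=locs_in_finite_dimensions}). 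For the discrete module $\Gamma$ smooth and continuous $\Gamma$-valued functions coincide, so $A^{\bullet}_{\globs}=A^{\bullet}_{\globc}$ and $\psi$ is the identity. Finally, the discrete submodule gives a short exact sequence $\Gamma\to\mathfrak{a}\to A$ of $G$-modules which is at the same time a smooth and a continuous locally trivial principal $\Gamma$-bundle, hence by Lemma~\ref{eqn:long_exact_coefficient_sequence_in_simplicial_cohom} induces long exact sequences in $H^{*}_{\simps}$ and in $H^{*}_{\simpc}$ that are intertwined by $\psi$; since $\psi$ is an isomorphism for $\mathfrak{a}$ and for $\Gamma$ in all degrees, the Five Lemma shows that $\psi$ is an isomorphism for $A$ in all degrees.

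The substantive input is thus the finite-dimensional theorem of Hochschild and Mostow, and everything else is bookkeeping; the one point that needs a little care is the naturality of the identifications used above — one has to verify that, under the isomorphisms coming from Corollary~\ref{cor:acyclic_sheaves_give_augmentation_row_cohomology} and from \eqref{eqn:locally_continuous_cohomology_as_simplicial_cohomology}, the comparison maps really correspond to the maps induced on the section (respectively cochain) complexes by the evident inclusions, so that the squares above commute. This is immediate from the description of the relevant double complex in Remark~\ref{rem:double_complex_for_spectral_sequence}, since every morphism in sight is induced by an inclusion of subsheaves; no obstacle beyond those already handled in Proposition~\ref{prop:locc=locs_in_finite_dimensions} and the preceding corollaries arises.
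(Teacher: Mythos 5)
Your proposal is correct and follows essentially the same route as the paper: the second and third maps are handled by Proposition \ref{prop:locc=locs_in_finite_dimensions} and the preceding corollary, and the first is obtained from the isomorphism $H^{n}_{\simps}(G,\cdot)\to H^{n}_{\simpc}(G,\cdot)$ for $\Gamma$ (smooth $=$ continuous into a discrete module) and for $\mathfrak{a}$ (Proposition \ref{prop:simp=glob_for_contractible_coefficients} plus Hochschild--Mostow), followed by the Five Lemma for the coefficient sequence $\Gamma\to\mathfrak{a}\to A$. The extra care you take with the commuting square and the naturality of the identifications is implicit in the paper's terser argument but not a different method.
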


\begin{proof}
 The second is an isomorphism by Proposition
 \ref{prop:locc=locs_in_finite_dimensions} and the third by the
 preceding corollary.
 Since $H^{n}_{\simps}(G,\Gamma)\to H^{n}_{\simpc}(G,\Gamma)$ is an
 isomorphism by definition and
 $H^{n}_{\simps}(G,\mf{a})\to H^{n}_{\simpc}(G,\mf{a})$ is an
 isomorphism by Proposition
 \ref{prop:simp=glob_for_contractible_coefficients} and
 \cite{HochschildMostow62Cohomology-of-Lie-groups}, the fist one in
 \eqref{eqn:finite-dimensional_G} is an isomorphism by the Five Lemma.
\end{proof}

\begin{corollary}
 If $G^{\ktimes[n]}$ is paracompact for each $n\geq 1$, and
 $\mc{U}_{\bullet}$ is a good cover of $BG_{\bullet}$, then
 $H^{n}_{\SM}(G,A)\cong\check{H}^{n}(\mc{U}_{\bullet},A^{\bullet}_{\globc})$.
\end{corollary}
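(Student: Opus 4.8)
The plan is to obtain the isomorphism as an application of the Comparison Theorem (Theorem \ref{thm:comparison_theorem}): I would show that the sequence of functors $A\mapsto\check{H}^{n}(\mc{U}_{\bullet},A^{\bullet}_{\globc})$ on $\cat{G-Mod}$ is a $\delta$-functor satisfying axioms \ref{eqn:comparison_theorem1} and \ref{eqn:comparison_theorem3}, whence Theorem \ref{thm:comparison_theorem} yields an isomorphism with $(H^{n}_{\SM}(G,-))_{n\in\N_{0}}$ of $\delta$-functors, and in particular $\check{H}^{n}(\mc{U}_{\bullet},A^{\bullet}_{\globc})\cong H^{n}_{\SM}(G,A)$. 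The hypotheses assumed here (paracompactness of all $G^{\ktimes[n]}$ and goodness of $\mc{U}_{\bullet}$) are precisely those of the \v{C}ech Proposition above, which supplies two of the three needed inputs.

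The first input is the $\delta$-functor structure. Given a short exact sequence $A\xrightarrow{\alpha}B\xrightarrow{\beta}C$ in $\cat{G-Mod}$, that is, one admitting a local continuous section, the \v{C}ech Proposition provides a long exact sequence relating $\check{H}^{n}(\mc{U}_{\bullet},A^{\bullet}_{\globc})$, $\check{H}^{n}(\mc{U}_{\bullet},B^{\bullet}_{\globc})$ and $\check{H}^{n}(\mc{U}_{\bullet},C^{\bullet}_{\globc})$; since it arises from a short exact sequence of the associated \v{C}ech double complexes, hence of their total complexes, the connecting homomorphisms are the standard ones and are natural in the short exact sequence of coefficients, which is exactly the data of a $\delta$-functor. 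For axiom \ref{eqn:comparison_theorem1} I would use that $BG_{0}=\op{pt}$: the $\check{\delta}$-cocycles in the bottom row $\check{C}^{\bullet,0}(\mc{U}_{\bullet},A^{\bullet}_{\globc})$ form the complex $(C^{\bullet}_{\globc}(G,A),\dgp)$ of globally continuous cochains (this uses only the formula for $d_{h}$ from Definition \ref{def:cech_cohomology} and the description of the maps $D^{n}_{i}$ in Remark \ref{rem:simplicail_sheaf_of_continuous_functions}), and since nothing else sits in total degree $0$, one gets $H^{0}$ of the total complex $=\ker(\dgp\from C^{0}_{\globc}(G,A)\to C^{1}_{\globc}(G,A))=A^{G}$, as required. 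For axiom \ref{eqn:comparison_theorem3} there is nothing to do beyond quoting the last assertion of the \v{C}ech Proposition: for contractible $A$ the sheaf $\underline{A}$ is soft on each paracompact $G^{\ktimes[p]}$, which forces $\check{H}^{n}(\mc{U}_{\bullet},A^{\bullet}_{\globc})\cong H^{n}_{\globc}(G,A)$.

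Thus the corollary is essentially a formality once the \v{C}ech Proposition is available, and I do not expect any genuine obstacle; the one point deserving a moment's care is the naturality of the long exact sequences in the first input, which is automatic because every ingredient is induced by a manifestly functorial short exact sequence of total complexes. As an alternative to re-invoking the Comparison Theorem, one could combine the previously established isomorphism $H^{n}_{\SM}(G,A)\cong H^{n}_{\simpc}(G,A)=H^{n}(BG_{\bullet},A^{\bullet}_{\globc})$ with a comparison between the \v{C}ech spectral sequence and the spectral sequence \eqref{eqn:spectral_sequence}: for a good cover on paracompact spaces their $E_{1}$-pages coincide entry by entry, \v{C}ech cohomology computing sheaf cohomology, so the abutments are isomorphic. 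I would nonetheless present the argument through the Comparison Theorem, keeping it parallel with the preceding corollaries.
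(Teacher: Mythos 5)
Your proposal is correct and follows exactly the route the paper intends: the corollary is stated without proof precisely because the \v{C}ech Proposition of the previous section supplies the long exact sequences (hence the $\delta$-functor structure) and axiom 2, the computation $H^{0}=A^{G}$ is immediate from $BG_{0}=\op{pt}$, and the Comparison Theorem then identifies the resulting $\delta$-functor with $H^{n}_{\SM}(G,\mathinner{\:\cdot\:})$. Your alternative via comparing the \v{C}ech spectral sequence with \eqref{eqn:spectral_sequence} is also consistent with the paper's remark that \v{C}ech cohomology computes $H^{n}_{\simpc}(G,A)$, but the $\delta$-functor argument is the one the surrounding corollaries use.
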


\begin{remark}
 Analogously to Corollary
 \ref{cor:acyclic_sheaves_give_augmentation_row_cohomology} one sees
 that if each $G^{\ktimes[n]}$ is paracompact, $\mc{U}_{\bullet}$ is a
 good cover of $BG_{\bullet}$ and $E^{\bullet}$ is a sheaf on
 $BG_{\bullet}$ with each $E^{n}$ is acyclic, then
 $\check{H}^{n}(\mc{U}_{\bullet},E^{\bullet})$ is the cohomology of the
 first column of the $E_{1}$-term. This shows in particular that
 $\check{H}^{n}(\mc{U}_{\bullet},A^{\bullet}_{\locc})\cong H^{n}_{\locc}(G,A)$.
 Moreover, the morphism of sheaves
 $A^{\bullet}_{\globc}\to A^{\bullet}_{\locc}$ induces a morphism
 \begin{equation}
  \label{eqn:morphism_from_cech_to_locally_continuous_cohomology}
  \check{H}^{n}(\mc{U}_{\bullet},A^{\bullet}_{\globc})\to \check{H}^{n}(\mc{U}_{\bullet},A^{\bullet}_{\locc})
  \xrightarrow{\cong} H^{n}_{\locc}(G,A).
 \end{equation}
 This morphism can be constructed in (more or less) explicit terms by
 the standard staircase argument for double complexes with acyclic rows
 (note that by the acyclicity of $A^{n}_{\locc}$ we may choose for each
 locally smooth \v{C}ech $q$-cocycle
 $\gamma_{i_{0},\ldots,i_{q}}\from U_{i_{0}}\cap\ldots\cap U_{i_{q}}\to A$
 on $G^{p}$ a locally smooth \v{C}ech cochain
 $\eta_{i_{0},\ldots,i_{q-1}}$ such that $\check{\delta}(\eta)=\gamma$).
 It is obvious that
 \eqref{eqn:morphism_from_cech_to_locally_continuous_cohomology} defines
 a morphism of $\delta$-functors. From the previous results and the
 uniqueness assertion of Theorem \ref{thm:moores_comparison_theorem} it
 now follows that
 \eqref{eqn:morphism_from_cech_to_locally_continuous_cohomology} is in
 fact an isomorphism provided $G^{\ptimes[n]}$ is compactly generated
 and paracompact for each $n\geq 1$.
\end{remark}

\begin{remark}
 In \cite[Prop.\
 5.1]{Flach08Cohomology-of-topological-groups-with-applications-to-the-Weil-group}
 it is shown that for $G$ a topological group and $A$ a $G$-module, such
 that the sheaf of continuous functions has no cohomology, the
 cohomology group of
 \cite{Flach08Cohomology-of-topological-groups-with-applications-to-the-Weil-group}
 coincide with $H^{n}_{\globc}(G,A)$. By \cite[Lem.\
 6]{Flach08Cohomology-of-topological-groups-with-applications-to-the-Weil-group}
 we also have long exact sequences, so the cohomology groups from
 \cite[Sect.\
 3]{Flach08Cohomology-of-topological-groups-with-applications-to-the-Weil-group}
 (which are anyway very similar to $H^{n}_{\simpc}(G,A)$, see also
 \cite{Lichtenbaum09The-Weil-etale-topology-for-number-rings}) also
 agree with $H^{n}_{\SM}(G,A)$.

 There is a slight variation of the latter cohomology groups by
 Schreiber
 \cite{Schreiber11Differential-Cohomology-in-a-Cohesive-infty-Topos} in
 the smooth setting and over the big topos of all Cartesian smooth
 spaces. The advantage of this approach is that it is embedded in a
 general setting of differential cohomology. In the case that $G$ is
 compact and $A$ is discrete or $A=\mf{a}/\Gamma$ for $\mf{a}$
 finite-dimensional, $\Gamma\se\mf{a}$ discrete and $G$ acts trivially
 on $A$ it has been shown in \cite[Prop.\
 3.3.12]{Schreiber11Differential-Cohomology-in-a-Cohesive-infty-Topos}
 that the cohomology groups
 $H^{n}_{\op{Smooth\infty{}Grpd}}({{\mathbf{B}}}G,A)$ from
 \cite{Schreiber11Differential-Cohomology-in-a-Cohesive-infty-Topos} are
 isomorphic to\footnote{This assertion is not stated explicitly but
 follows from  \cite[Prop.\
 3.3.12]{Schreiber11Differential-Cohomology-in-a-Cohesive-infty-Topos}
 by the vanishing of $H^{n}_{\globs}(G,\mf{a})$ \cite[Thm.\
 1]{Est55On-the-algebraic-cohomology-concepts-in-Lie-groups.-I-II} and
 the long exact coefficient sequence.}
 $\check{H}^{n}(\mc{U}_{\bullet},A^{\bullet}_{\globs})$ (where
 $\mc{U}_{\bullet}$ is a good cover of $BG^{\infty}_{\bullet}$).
\end{remark}

\begin{remark}
 We now compare $H^{n}_{\locc}(G,A)$ with (one of) the cohomology groups from
 \cite{Moore76Group-extensions-and-cohomology-for-locally-compact-groups.-III}.
 For this we assume that $G$ is a second countable locally compact group of
 finite covering dimension. A Polish $G$-module is a separable complete
 metrizable\footnote{We will throughout assume that the metric is bounded. This
 is no lose of generality since we may replace each invariant metric $d(x,y)$
 with the topologically equivalent bounded invariant metric
 $\frac{d(x,y)}{1+d(x,y)}$.} abelian topological group $A$ together with a
 jointly continuous action $G\times A\to A$. Morphisms of Polish $G$-modules
 are continuous group homomorphisms intertwining the $G$-action. If $G$ is a
 locally compact group and $A$ is a Polish $G$-module, then
 $H^{n}_{\Moore}(G,A)$ denotes the cohomology of the cochain complex
 \begin{equation*}
  C^{n}_{\meas}(G,A):=\{f\from G^{n}\to A:f\text{ is Borel measurable}\}
 \end{equation*}
 with the group differential $\dgp$ from \eqref{eqn:group_differential}. It has
 already been remarked in
 \cite{Wigner73Algebraic-cohomology-of-topological-groups} that these are
 isomorphic to $H^{n}_{\simpc}(G,A)$, we give here a detailed proof of this and
 extend the result slightly.
 
 On the category of Polish $G$-modules we consider as short exact sequences
 those sequences $A\xrightarrow{\alpha} B\xrightarrow{\beta} C$ for which the
 underlying sequence of abstract abelian groups is exact, $\alpha$ is an
 (automatically closed) embedding and $\beta$ is open. From \cite[Prop.\
 11]{Moore76Group-extensions-and-cohomology-for-locally-compact-groups.-III} it
 follows that from this we obtain natural long exact sequences, i.e.,
 $H^{n}_{\Moore}(G,\:\mathinner{\cdot}\:)$ is a $\delta$-functor. Moreover, it
 follows from \cite[Prop.\
 3]{Wigner73Algebraic-cohomology-of-topological-groups} and from the remarks
 before \cite[Thm.\ 2]{Wigner73Algebraic-cohomology-of-topological-groups} that
 each locally continuous cochain $f\from G^{\ptimes[n]}\to C$ can be lifted to
 a locally continuous cochain $\wt{f}\from G^{\ptimes[n]}\to B$. This is due to
 the assumption on $G$ to be finite-dimensional. From this it follows as in
 Remark \ref{rem:long_exact_coefficient_sequence} that
 $A\xrightarrow{\alpha} B\xrightarrow{\beta} C$ also induces a long exact
 sequence for $H^{n}_{\locc}(G,\:\mathinner{\cdot}\:)$ (this is the reason for
 choosing $H^{n}_{\locc}(G,A)$ for this comparison).
 
 On the category of Polish $G$-modules we now consider the functors
 \begin{equation*}
  A \mapsto \ol{E}_{G}(A):=C(G,U(I,A)),
 \end{equation*}
 where $U(I,A)$ is the group of Borel functions from the unit interval $I$ to
 $A$ modulo those that vanish on a set of measure $0$. Moreover, $U(I,A)$ is a
 Polish $G$-module \cite[Sect.\
 2]{Moore76Group-extensions-and-cohomology-for-locally-compact-groups.-III} and
 coincides with the completion of the metric abelian topological group
 $A$-valued step-functions on the right-open unit interval $[0,1)$, endowed
 with the metric
 \begin{equation*}
  d(f,g ):=\int_{0}^{1}d_{A}(f(t)g(t))\;dt,
 \end{equation*}
 see also
 \cite{BrownMorris77Embeddings-in-contractible-or-compact-objects,Keesling73Topological-groups-whose-underlying-spaces-are-separable-Frechet-manifolds,HartmanMycielski58On-the-imbedding-of-topological-groups-into-connected-topological-groups}.
 In particular, $U(I,A)$ inherits the structure of a $G$-module and so does
 $\ol{E}_{G}(A)$. Moreover, it is contractible and thus $\ol{E}_{G}(A)$ is
 soft. Since $G$ is $\sigma$-compact we also have that $C(G,U(I,A))$ is
 completely metrizable.
 
 Now $A$ embeds as a closed submodule into $\ol{E}_{G}(A)$ and we set
 $\ol{B}_{G}(A):=\ol{E}_{G}(A)/A$. Thus
 \begin{equation*}
  A\to \ol{E}_{G}(A)\to \ol{B}_{G}(A)
 \end{equation*}
 becomes short exact since orbit projection of continuous group actions are
 automatically open. In virtue of Theorem \ref{thm:moores_comparison_theorem}
 and the fact that the locally continuous cohomology vanishes for soft modules
 this furnishes a morphism of $\delta$-functors from
 $H^{n}_{\locc}(G,\:\mathinner{\cdot}\:)$ to
 $H^{n}_{\Moore}(G,\:\mathinner{\cdot}\:)$ (the constructions of
 $Q_{f}, \beta_{f}$ and $\gamma_{f}$ from Theorem \ref{thm:comparison_theorem}
 carry over to the present setting). Moreover, the functors $A\mapsto I(A)$ and
 $A\mapsto U(A)$ that Moore constructs in \cite[Sect.\
 2]{Moore76Group-extensions-and-cohomology-for-locally-compact-groups.-III}
 satisfy $H^{n}_{\Moore}(I(A))=0$ \cite[Thm.\
 4]{Moore76Group-extensions-and-cohomology-for-locally-compact-groups.-III}.
 Thus Remark \ref{rem:weaker_comparison_theorem} shows that
 $H^{n}_{\locc}(G,\:\mathinner{\cdot}\:)$ and
 $H^{n}_{\Moore}(G,\:\mathinner{\cdot}\:)$ are isomorphic (even as
 $\delta$-functors) on the category of Polish $G$-modules. This also extends
 \cite[Thm.\ C]{AustinMoore10Continuity-properties-of-measurable-group-cohomology} to arbitrary
 locally contractible coefficients.
 
 In addition, this shows that the mixture of measurable and locally continuous
 cohomology groups $H^{n}_{lcm}(G,A)$ from
 \cite{KhedekarRajan12On-cohomology-theory-for-topological-groups} does
 coincide with $H^{n}_{\Moore}(G,A)$. Indeed, the morphism
 $H^{n}_{lcm}(G,A)\to H^{n}_{\Moore}(G,A)$ of $\delta$-functors \cite[Cor.\
 1]{KhedekarRajan12On-cohomology-theory-for-topological-groups} is surjective
 for each $n$ and contractible $A$ (since then
 $H^{n}_{\globc}(G,A)\to H^{n}_{\Moore}(G,A)$ is surjective) and also injective
 (since $H^{n}_{\globc}(G,A)\to H^{n}_{lcm}(G,A)\to H^{n}_{\locc}(G,A)$ is so).
 Thus $H^{n}_{lcm}(G,A)\cong H^{n}_{\Moore}(G,A)\cong H^{n}_{\globc}(G,A)$ for
 each $n$ and contractible $A$ and the Comparison Theorem shows that
 $H^{n}_{lcm}(G,\:\mathinner{\cdot}\:)$ is isomorphic to
 $H^{n}_{\locc}(G,\:\mathinner{\cdot}\:)$, also as $\delta$-functor.
\end{remark}

\begin{remark}\label{rem:bounded_cohomology}
 Whereas all preceding cohomology theories fit into the framework of the
 Comparison Theorem, bounded continuous cohomology
 \cite{Monod01Continuous-bounded-cohomology-of-locally-compact-groups,Monod06An-invitation-to-bounded-cohomology}
 does not. First of all, this concept considers locally compact $G$ and
 Banach space coefficients $A$, whence all of the above cohomology
 theories agree to give $H^{n}_{\globc}(G,A)$. The bounded continuous
 cohomology $H^{n}_{\bc}(G,A)$ is the cohomology of the sub complex
 of bounded continuous functions $(C_{\bc}(G^{n},A),\dgp)$. Thus there
 is a natural comparison map
 \begin{equation*}
  H^{n}_{\bc}(G,A)\to H^{n}_{\globc}(G,A)
 \end{equation*}
 which is obviously an isomorphism for compact $G$. However, bounded
 cohomology unfolds its strength not before considering non-compact
 groups, where the above map is in general not an isomorphism \cite[Ch.\
 9]{Monod01Continuous-bounded-cohomology-of-locally-compact-groups},
 even not for Lie groups \cite[Ex.\
 9.3.11]{Monod01Continuous-bounded-cohomology-of-locally-compact-groups}.
 In fact, bounded cohomology is \emph{designed} to make the above map
 \emph{not} into an isomorphism for measuring the deviation of $G$
 from being compact.
\end{remark}

\begin{tabsection}
 Despite the last example, the properties of the Comparison Theorem seem
 to be the essential ones for a large class of important concepts of
 cohomology groups for topological groups. We thus give it the following
 name.
\end{tabsection}

\begin{definition}
 A \emph{cohomology theory for $G$} is a $\delta$-functor
 $(F^{n}\from \cat{G-Mod}\to \cat{Ab})_{n\in\N}$ satisfying conditions
 \ref{eqn:comparison_theorem1} and \ref{eqn:comparison_theorem3} of the
 Comparison Theorem.
\end{definition}

\begin{remark}\label{rem:properties}
 We end this section with listing properties that any cohomology theory
 for $G$ has. We will always indicate the concrete model that we are
 using, the isomorphisms of the models are then due to the corollaries
 of this section. Parts of these facts have already been established for
 the various models in the respective references.
 \begin{enumerate}
  \item If $A$ is discrete and each $G^{\ktimes[n]}$ is paracompact,
        then
        $H^{n}_{\SM}(G,A)\cong H^{n}_{\pi_{1}(BG)}(BG,\underline{A})$ is
        the cohomology of the topological classifying space twisted by
        the $\pi_{1}(BG)\cong \pi_{0}(G)$-action on $A$ (note that
        $G_{0}$ acts trivially since $A$ is discrete). This follows from
        \cite[Prop.\ 3.3]{Segal70Cohomology-of-topological-groups}, cf.\
        also \cite[6.1.4.2]{Deligne74Theorie-de-Hodge.-III}. If,
        moreover, $G$ is $(n-1)$-connected, then
        $H^{n+1}_{\SM}(G,A)\cong \Hom(\pi_{n}(G),A)$.
  \item If $G$ is contractible and each $G^{\ptimes[n]}$ is compactly
        generated, then
        $H^{n}_{\SM}(G,A)\cong H^{n}_{\locc}(G,A)\cong H^{n}_{\globc}(G,A)$.
        This follows from \cite[Thm.\
        5.16]{Fuchssteiner11A-spectral-sequence-connecting-continuous-with-locally-continuous-group-cohomology}.
  \item If $G$ is compact and $A=\mf{a}/\Gamma$ for $\mf{a}$ a
        quasi-complete locally convex space which is a continuous
        $G$-module and $\Gamma$ a discrete submodule, then
        $H^{n}_{\SM}(G,A)\cong H^{n+1}_{\pi_{1}(BG)}(BG,\Gamma)$. This
        follows from the vanishing of
        $H^{n}_{\SM}(G,\mf{a})\cong H^{n}_{\globc}(G,\frak{a})$ (cf.\
        \cite[Thm.\ 2.8]{Hu52Cohomology-theory-in-topological-groups} or
        \cite[Lem.\
        IX.1.10]{BorelWallach00Continuous-cohomology-discrete-subgroups-and-representations-of-reductive-groups})
        and the long exact sequence induced from the short exact
        sequence $\Gamma\to \mf{a}\to A$. In particular, if $G$ is a
        compact Lie group and $A$ is finite-dimensional, then
        \begin{equation*}
         H^{n}_{\locc}(G,A)\cong H^{n}_{\locs}(G,A)\cong
         H^{n+1}_{\pi_{1}(BG)}(BG,\Gamma).
        \end{equation*}
 \end{enumerate}
\end{remark}

\section{Examples and applications} \label{sect:examples}

\begin{tabsection}
 The main motivation for this paper is that locally continuous and
 locally smooth cohomology are somewhat easy to handle, but lacked so
 far a conceptual framework. On the other hand, the simplicial
 cohomology groups or the ones introduced by Segal and Mitchison are
 hard to handle in degrees $\geq 3$. We will give some results that one
 can derive from the interaction of these different concepts.
\end{tabsection}

\begin{example}\label{ex:string_cocycles}
 There are several cocycles (or, more precisely,
 cohomology classes) which deserve to be named ``string cocycle'' (or, more
 precisely, ``string class''). For this example, we assume that $G$ is a
 compact simple and 1-connected Lie group (which is thus automatically
 2-connected). There exists for each $g\in G$ a path
 $\alpha_{g}\in C^{\infty}([0,1],G)$ with $\alpha_{g}(0)=e$,
 $\alpha_{g}(1)=g$ and for each $g,h\in G$ a filler\footnote{From the
 2-connectedness of $G$ it only follows that there exist continuous
 fillers, that these can chosen to be smooth follows from the density of
 $C^{\infty}(\Delta^{n},G)$ in $C(\Delta^{n},G)$ \cite[Cor.\
 14]{Wockel06A-Generalisation-of-Steenrods-Approximation-Theorem}.}
 $\beta_{g,h}\in C^{\infty}(\Delta^{2},G)$ for the triangle
 $(\dgp \alpha)(g,h)= g.\alpha_{h}-\alpha_{gh}+\alpha_{g}$ (Figure
 \ref{fig:pic_triangle}).
 \begin{figure}[htbp]
  \centering \includegraphics[width=0.5\textwidth]{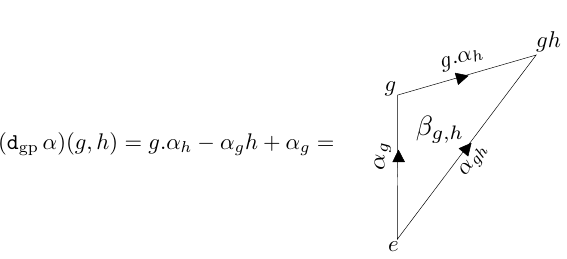}
  \caption{$\beta_{g,h}$ fills $(\dgp \alpha)(g,h)$}
  \label{fig:pic_triangle}
 \end{figure}
 
 \noindent Moreover,
 $(\dgp \beta)(g,h,k)=g.\beta_{h,k}-\beta_{gh,k}+\beta_{g,hk}-\beta_{g,h}$
 bounds a tetrahedron which can be filled with
 $\gamma_{g,h,k}\in C^{\infty}(\Delta^{3},G)$ (Figure
 \ref{fig:pic_cocycle}).
 \begin{figure}[htbp]
  \centering \includegraphics[width=0.65\textwidth]{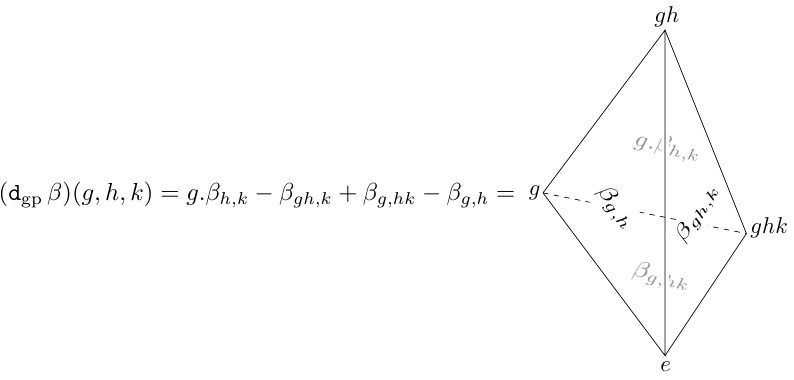}
  \caption{$\gamma_{g,h,k}$ fills $(\dgp \beta)(g,h,k)$}
  \label{fig:pic_cocycle}
 \end{figure}
 
 \noindent In addition, $\alpha$, $\beta$ and $\gamma$, interpreted as
 maps $G^{n}\to C^{\infty}(\Delta^{n},G)$ for $n=1,2,3$, can be chosen
 to be smooth on some identity neighborhood. From these choices we can
 now construct the following cohomology classes (which in turn are
 independent of the above choices as a straightforward check shows,
 cf.\ \cite[Rem.\
 1.12]{Wockel08Categorified-central-extensions-etale-Lie-2-groups-and-Lies-Third-Theorem-for-locally-exponential-Lie-algebras}).
 \begin{enumerate}
  \item Since
        $\partial \dgp(\gamma)=\dgp(\partial \gamma)=\dgp^{2}\beta =0$,
        the map
        \begin{equation*}
         (g,h,k,l)\mapsto(\dgp \gamma)(g,h,k,l)
        \end{equation*}
        takes values in the singular 3-cycles on $G$ and thus gives rise
        to map
        $\theta_{3}\from G^{4}\to H_{3}(G)\cong \pi_{3}(G)\cong \Z$ (see
        also Example \ref{ex:path-space_construction}). This map is
        locally smooth since $\gamma$ was assumed to be so and it is a
        cocycle since $\dgp (\dgp(\gamma))=0$ (note that it is not a
        coboundary since $\gamma$ does not take values in the singular
        cycles but only in the singular chains).
  \item The cocycle $\sigma_{3}\from G^{3}\to U(1)$ from \cite[Ex.\
        4.10]{Wockel08Categorified-central-extensions-etale-Lie-2-groups-and-Lies-Third-Theorem-for-locally-exponential-Lie-algebras}
        obtained by setting
        \begin{equation*}
         \sigma_{3}(g,h,k):=\exp\left(\int_{\gamma_{g,h,k}}\omega\right),
        \end{equation*}
        where $\omega$ is the left-invariant 3-from on $G$ with
        $\omega(e)=\langle [\mathinner{\cdot},\mathinner{\cdot}],\mathinner{\cdot}\rangle$
        normalized such that $[\omega]\in H^{3}_{\op{dR}}(G)$ gives a
        generator of $H^{3}_{\op{dR}}(G,\Z)\cong \Z$ and
        $\exp\from \R\to U(1)$ is the exponential function of $U(1)$
        with kernel $\Z$. Since $\omega$ is in particular an integral
        3-form, this implies that $\sigma_{3}$ is a cocycle because
        $\dgp(\gamma)(g,h,k,l)$ is a piece-wise smooth singular cycle
        and thus
        \begin{equation*}
         \dgp \sigma_{3}(g,h,k,l)=\exp\left(\int _{\dgp \gamma(g,h,k,l)}\omega \right)=1.
        \end{equation*}
        Since $\gamma$ is smooth on some identity neighborhood,
        $\sigma_{3}$ is so as well. Now
        \begin{equation*}
         \wt{\sigma}_{3}(g,h,k):=\int_{\gamma(g,h,k)}\omega
        \end{equation*}
        provides a locally smooth lift of $\sigma_{3}$ to $\R$. Thus the
        homomorphism
        $\delta\from H^{3}_{\locs}(G,U(1))\to H^{4}_{\locs}(G,\Z)$ maps
        $[\sigma_{3}]$ to $[\theta_{3}]$ since
        \begin{equation*}
         \dgp \wt{\sigma}_{3}=\int_{\dgp\gamma}\omega
        \end{equation*}
        and integration of piece-wise smooth representatives along
        $\omega$ provides the isomorphism $\pi_{3}(G)\cong \Z$. We will
        justify calling $\sigma_{3}$ a string cocycle in Remark
        \ref{rem:string_cocycle_2}.
  \item The locally smooth cocycles arising as characteristic cocycles
        \cite[Lem.\
        3.6.]{Neeb07Non-abelian-extensions-of-infinite-dimensional-Lie-groups}
        from the strict models
        \cite{BaezCransStevensonSchreiber07From-loop-groups-to-2-groups,NikolausSachseWockel13A-Smooth-Model-for-the-String-Group}
        of the string 2-group. In the case of the model from
        \cite{BaezCransStevensonSchreiber07From-loop-groups-to-2-groups}
        this gives precisely $\sigma_{3}$.
 \end{enumerate}
 Suppose $\mc{U}_{\bullet}$ is a good cover of $BG_{\bullet}$. The model
 from
 \cite{Schommer-Pries10Central-Extensions-of-Smooth-2-Groups-and-a-Finite-Dimensional-String-2-Group}
 is constructed by showing that
 $\check{H}^{3}(\mc{U}_{\bullet},U(1)^{\bullet}_{\globs})$ classifies
 central extensions of finite-dimensional group stacks
 \begin{equation*}
  [{*}/U(1)]\to [\Gamma] \to G
 \end{equation*}
 and then taking the isomorphism
 \begin{equation*}
  \check{H}^{3}(\mc{U}_{\bullet},U(1)^{\bullet}_{\globs})\cong H^{3}_{\SM}(G,U(1))\cong H^{4}(BG,\Z)\cong \Z
 \end{equation*}
 (cf.\ Remark \ref{rem:properties}), yielding for each generator a model
 for the string group. We will see below that the classes from above are
 also generators in the respective cohomology groups and thus represent
 the various properties of the string group. For instance, we expect
 that the class $[\sigma_{3}]$ will be the characteristic class for
 representations of the string group.
\end{example}

The previous construction can be generalized as follows.

\begin{example}\label{ex:path-space_construction}
 Let $G$ be a $(n-1)$-connected Lie group and denote by
 $C^{\infty}_{*}(\Delta^{k},G)$ the group of based smooth $k$-simplices
 in $G$ (the same construction works for locally contractible
 topological groups and the continuous $k$-simplices). Then we may
 choose for each $1\leq k\leq n$ maps
 \begin{equation*}
  \alpha_{k}\from G^{k}\to C^{\infty}_{*}(\Delta^{k},G),
 \end{equation*}
 such that each $\alpha_{k}$ is smooth on some identity neighborhood and
 that
 \begin{equation*}
  \partial\alpha_{k}(g_{1},\ldots,g_{k})=\dgp(\alpha_{k-1})(g_{1},\ldots,g_{k}).
 \end{equation*}
 In the latter formula, we interpret $C^{\infty}_{*}(\Delta^{k},G)$ as a
 subset of the group $\langle C(\Delta^{k},G)\rangle_{\Z}$ of singular
 $k$-chains in $G$, which becomes a $G$-module if we let $G$ act by left
 multiplication. Since $G$ is $(n-1)$-connected, we can inductively
 choose $\alpha_{k}$, starting with $\alpha_{0}\equiv e$.

 Now consider the map
 \begin{equation*}
  \theta_{n}:=\dgp(\alpha_{n})\from G^{n+1}\to \langle C(\Delta^{n},G)\rangle_{\Z}.
 \end{equation*}
 Since
 \begin{equation}
  \partial \theta_{n}=\partial \dgp(\alpha_{n})=\dgp(\partial \alpha_{n})=\dgp^{2}(\alpha_{n-1})=0,
 \end{equation}
 $\theta_{n}$ takes values in the singular $n$-cycles on $G$ and thus
 gives rise to a map
 $\theta_{n}\from G^{n+1}\to H_{n}(G)\cong \pi_{n}(G)$.
 Moreover, $\theta_{n}$ is a group cocycle %
 and it is locally smooth since $\alpha_{n}$ is so. Of course, this
 means here that $\theta_{n}$ even vanishes on some identity
 neighborhood (in the product topology). It is straightforward to show
 that different choices for $\alpha_{k}$ yield equivalent cocycles.

 These are the characteristic cocycles for the $n$-fold extension
 \begin{equation}\label{eqn:n-fold_extension}
  \pi_{n}(G)\to \wt{\Omega ^{n}G}\to P_{e}\Omega^{n-1}G\to \cdots\to P_{e}\Omega G\to P_{e}G \to G,
 \end{equation}
 ($P_{e}$ denoted pointed paths and $\Omega $ pointed loops) of
 topological groups spliced together from the short exact sequences
 \begin{equation*}
  \pi_{n}(G)\to \wt{\Omega^{n}G}\to \Omega^{n}G
  \quad\text{ and }\quad
  \Omega^{n}G\to P_{e}\Omega^{n-1}G \to \Omega^{n-1}G\text{ for }n\geq 0.
 \end{equation*}
 Moreover, the exact sequence
 \begin{equation*}
  \wt{\Omega ^{n}G}\to \Omega^{n-1}G\to \cdots\to \Omega G\to P_{e}G
 \end{equation*}
 gives rise to a simplicial topological group $\Pi_{n}(G)$ and we have
 canonical morphisms
 \begin{equation*}
  B^{n}\pi_{n}(G)\to \Pi_{n}(G)\to \ul{G}.
 \end{equation*}
 Here, $B^{n}\pi_{n}(G)$ is the nerve of the $(n-1)$-groupoid with only
 trivial morphisms up to $(n-2)$ and $\pi_{n}(G)$ as $(n-1)$-morphisms
 and $\ul{G}$ is the nerve of the groupoid with objects $G$ and only
 identity morphisms. Taking the geometric realization
 $|\mathinner{\cdot}|$ gives (at least for metrizable $G$) now an
 extension of groups in $\cghaus$
 \begin{equation*}
  K(n,\pi_{n}(G))\simeq |B^{n}\pi_{n}(G)|\to |\Pi_{n}(G)|\to |G|=G,
 \end{equation*}
 which can be shown to be an $n$-connected cover
 $G\langle n\rangle\to G$ with the same methods as in
 \cite{BaezCransStevensonSchreiber07From-loop-groups-to-2-groups}.
\end{example}

\begin{remark}\label{rem:X-Mod}
 Recall that a crossed module $\mu:M\to N$ is a group homomorphism
 together with an action by automorphisms of $N$ on $M$ such that $\mu$
 is equivariant and such that for all $m,m'\in M$, the Peiffer identity
 \begin{equation*}
  \mu(m). m'=mm'm^{-1} 
 \end{equation*}
 holds. Taking into account topology, we suppose that $M$ and $N$ are
 groups in $\cghaus$, $\mu$ is continuous and $(n,m)\mapsto n.m$ is
 continuous. We call a closed subgroup $H$ of a group in $\cghaus$
 {\it split} if the multiplication map $G\ktimes H\to G$ defines a
 topological $H$-principal bundle (see \cite[Def.\
 2.1]{Neeb07Non-abelian-extensions-of-infinite-dimensional-Lie-groups}).
 We will throughout use the constructions in the smooth setting from
 \cite{Neeb07Non-abelian-extensions-of-infinite-dimensional-Lie-groups},
 which carry over to the present topological setting. In this case, we
 have in particular that $G\to G/H$ has a continuous local section. To
 avoid pathological cases%
, we suppose that all our crossed modules are {\it
 topologically split}, i.e., we suppose that $\ker(\mu)$ is a split
 topological subgroup of $M$, that $\im(\mu)$ is a split topological
 subgroup of $N$, and that $\mu$ induces a homeomorphism
 $M/\ker(\mu)\cong\im(\mu)$. 
\end{remark}

Using the above methods, we can now show the following:

\begin{theorem}
 If each $G^{\ptimes[n]}$ is compactly generated, then the set of
 equivalence classes of crossed modules with cokernel $G$ and kernel $A$
 is in bijection with $H^3_{\locc}(G,A)$.
\end{theorem}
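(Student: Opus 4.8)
The plan is to set up the bijection by a characteristic--class map in one direction and an explicit construction built on the Segal--Mitchison soft module in the other, in the spirit of \cite{Neeb07Non-abelian-extensions-of-infinite-dimensional-Lie-groups} but in the locally continuous topological setting. All crossed modules are topologically split (Remark~\ref{rem:X-Mod}), and ``kernel $A$, cokernel $G$'' is understood to include the requirement that the induced $G$-action on $A$ be the given one: since the Peiffer identity forces $A=\ker\mu$ to be central in $M$, the conjugation action of $N$ on $M$ restricts on $A$ to an action that is trivial on $\im\mu$ and hence descends to $G=N/\im\mu$.

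\textbf{The characteristic class.} Given a crossed module $\mu\from M\to N$ as above, topological splitness makes $N\to G$ and $\mu\from M\to\im\mu$ into locally trivial principal bundles, so we may pick normalised sections $s\from G\to N$ and $\sigma\from\im\mu\to M$ that are continuous on some identity neighbourhood. With $f(g,h):=\sigma\big(s(g)s(h)s(gh)^{-1}\big)\in M$ we set
\begin{equation*}
 c(g,h,k):=\big(s(g).f(h,k)\big)\cdot f(g,hk)\cdot f(gh,k)^{-1}\cdot f(g,h)^{-1},
\end{equation*}
where $s(g).{}$ is the $N$-action on $M$. Since $\mu(f(g,h))=s(g)s(h)s(gh)^{-1}$, associativity in $N$ gives $\mu(c(g,h,k))=e$, so $c$ has values in $A$; the crossed-module axioms then yield $\dgp c=0$, and $c$ is a locally continuous cochain because $s$ and $\sigma$ are. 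Replacing $s$ by $g\mapsto\mu(b(g))\,s(g)$ for some $b\from G\to M$ or replacing $\sigma$ alters $c$ by a locally continuous coboundary, and the same holds under an elementary morphism of crossed modules (one inducing the identity on $A$ and $G$); hence the class $\chi(\mu)\in H^{3}_{\locc}(G,A)$ is well defined on equivalence classes.

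\textbf{The inverse.} The coefficient sequence $A\xrightarrow{i_{A}}E_{G}(A)\xrightarrow{q}B_{G}(A)$ has a local continuous section by Lemma~\ref{lem:global_section_for_BGA}, so it is a locally trivial principal $A$-bundle; moreover $E_{G}(A)=\kC(G,EA)$ is loop-contractible because $EA$ is, and since each $G^{\ptimes[n]}$ is compactly generated, Propositions~\ref{prop:loc=glob_for_contractible_coefficients} and~\ref{prop:soft_modules_are_acyclic_for_globcont} give $H^{n}_{\locc}(G,E_{G}(A))=0$ for $n\geq 1$. Therefore the connecting homomorphism $\delta\from H^{2}_{\locc}(G,B_{G}(A))\to H^{3}_{\locc}(G,A)$ is an isomorphism. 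Given a class in $H^{3}_{\locc}(G,A)$, write it as $\delta[\omega]$, form the abelian extension $B_{G}(A)\to\wh{G}\to G$ of topological groups attached to the locally continuous $2$-cocycle $\omega$ (a locally trivial principal bundle, cf.\ Remark~\ref{rem:morphism_from_locally_continuous_to_Cech_cohomology}), and put $N:=\wh{G}$ and $M:=E_{G}(A)$ with $\mu$ the composite $E_{G}(A)\xrightarrow{q}B_{G}(A)\hookrightarrow\wh{G}$. Then $\ker\mu=i_{A}(A)\cong A$ and $\im\mu=B_{G}(A)=\ker(\wh{G}\to G)$, so $\coker\mu\cong G$. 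Letting $\wh{G}$ act on $E_{G}(A)$ through $\wh{G}\to G$ and the given $G$-module structure, $\im\mu$ acts trivially on $M$ (so the Peiffer identity holds, $M$ being abelian) and $\mu$ is equivariant because $q$ is $G$-equivariant and the conjugation action of $\wh{G}$ on its normal subgroup $B_{G}(A)$ is precisely the given $G$-action. This crossed module is again topologically split, and unwinding the definition of $\delta$ against the formula for $c$ shows that $\chi$ sends it back to the class we started from.

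\textbf{Injectivity and the main obstacle.} It remains to see that $\chi$ is injective, i.e.\ that a crossed module with $\chi(\mu)=0$ is equivalent to the split one $A\xrightarrow{\,0\,}G$. Choosing sections so that $c\equiv 0$, the cochain $f$ becomes multiplicative for the law twisted by $s$, and from this one builds a zig-zag of elementary morphisms connecting $M\to N$ through the construction of the previous paragraph to $A\xrightarrow{\,0\,}G$; equivalently, one verifies directly that applying the construction to $\chi(\mu)$ returns a crossed module equivalent to the given one. The delicate point is exactly this bookkeeping: tracking the chain of crossed-module morphisms while only local continuity of the sections $s,\sigma$ is available, and checking that every intermediate crossed module stays topologically split. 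By comparison the cohomological inputs ($H^{n}_{\locc}$ being a $\delta$-functor and the acyclicity of $E_{G}(A)$) and the cocycle algebra behind $\chi$ are routine.
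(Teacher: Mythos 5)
Your construction of the characteristic class $\chi$ and your surjectivity argument follow essentially the same route as the paper: the class is the standard one obtained from sections $s$ and $\sigma$ that are continuous near the identity (the continuous version of Neeb's Lemma 3.6, which the paper simply cites), and the inverse construction uses the acyclicity of the soft module $E_{G}(A)$ to invert the connecting homomorphism $\delta\from H^{2}_{\locc}(G,B_{G}(A))\to H^{3}_{\locc}(G,A)$ and then splices the resulting abelian extension of $G$ by $B_{G}(A)$ with the coefficient sequence $A\to E_{G}(A)\to B_{G}(A)$; this is word for word the paper's surjectivity proof. Your route to the vanishing of $H^{n\geq 1}_{\locc}(G,E_{G}(A))$ via loop-contractibility of $E_{G}(A)$ and Proposition \ref{prop:loc=glob_for_contractible_coefficients} is a harmless variant of the paper's identification of $H^{n}_{\locc}$ with $H^{n}_{\SM}$.

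The genuine gap is the one you flag yourself: injectivity is asserted but not proved. ``Choosing sections so that $c\equiv 0$'' and ``building a zig-zag of elementary morphisms'' is precisely the nontrivial content, and it is not mere bookkeeping: from $c=\dgp b$ with $b$ locally continuous one must actually manufacture an extension $A\to\wh{N}\xrightarrow{q}N$ together with an $N$-equivariant isomorphism $M\cong q^{-1}(\im(\mu))$, and from that data read off the two morphisms of four-term exact sequences connecting $A\to M\to N\to G$ with the trivial crossed module $A\to A\xrightarrow{0}G\to G$; one must also check that these intermediate objects are again topologically split. This equivalence (vanishing of the class if and only if such an $\wh{N}$ exists) is exactly what the paper invokes as the continuous analogue of Neeb's Theorem 3.8. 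Without either reproving that statement in the locally continuous setting or citing it, your argument establishes only that $\chi$ is a well-defined surjection onto $H^{3}_{\locc}(G,A)$, not a bijection.
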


\begin{proof}
 It is standard to associate to a (topologically split) crossed module a
 locally continuous $3$-cocycle (see \cite[Lem.\
 3.6]{Neeb07Non-abelian-extensions-of-infinite-dimensional-Lie-groups}).
 To show that this defines an injection of the set of equivalence
 classes into $H^3_{\locc}(G,A)$, we use the continuous version of
 \cite[Th.\
 2.17]{Neeb07Non-abelian-extensions-of-infinite-dimensional-Lie-groups}.
 Namely, if $A\to M\to N\to G$ is sent to the trivial class, 
 the existence of an extension $M\to\hat{G}\xrightarrow{q}G$
 realizing the outer action of $G$ on $M$
 gives rise to a crossed module $A\to A\times M\to\hat{G}\xrightarrow{q}G$
 providing two morphisms of four term exact sequences linking
 $A\to M\to N\to G$ to the trivial crossed module
 $A\to A \xrightarrow{0}G\to G$.

 Therefore we focus here on surjectivity, i.e. we construct a crossed
 module from a given locally continuous $3$-cocycle. For this, embed the
 $G$-module $A$ in a soft $G$-module:
 \begin{equation*}
  0\to A\to E_G(A)\to B_G(A)\to 0.
 \end{equation*}
 Observe that $H_{\SM}^n(G,E_G(A))\cong H^{n}_{\globc}(G,E_{G}(A))$
 vanishes for $n\geq 1$ (Proposition
 \ref{prop:soft_modules_are_acyclic_for_globcont}). The vanishing shows
 now that the connecting homomorphism of the associated long exact
 sequence induces an isomorphism
 \begin{equation*}
  \delta:H_{\locc}^2(G,B_G(A))\cong H_{\locc}^3(G,A),
 \end{equation*}
 where we have used the isomorphism of $H_{\SM}^{n}$ and
 $H_{\locc}^{n}$. Thus for the given $3$-cocycle $\gamma$, there exists
 a locally continuous $2$-cocycle $\alpha$ with values in $B_G(A)$ such
 that $\delta[\alpha]=[\gamma]$. Using $\alpha$, we can form an abelian
 extension
 \begin{equation*}
  0\to B_G(A)\to B_G(A)\times_{\alpha} G\to G\to 1.
 \end{equation*}
 Now splicing together this abelian extension with the short exact
 coefficient sequence
 \begin{equation*}
  0\to A\to E_G(A)\to B_G(A)\to 0
 \end{equation*}
 gives rise to a crossed module $\mu:E_G(A)\to B_G(A)\times_{\alpha} G$
 which is topologically split in the above sense. Indeed, the
 coefficient sequence is topologically split by assumption, and the
 abelian extension has a continuous local section by construction.

 Finally, the fact that the $3$-class associated to this crossed module
 is $[\gamma]$ follows from $\delta[\alpha]=[\gamma]$. Some details for
 this kind of construction can also be found in
 \cite{Wagemann06On-Lie-algebra-crossed-modules}.
\end{proof}

\begin{remark}
 In the case of locally compact second countable $G$ and metrizable $A$
 the module $EA$ is metrizable
 \cite{BrownMorris77Embeddings-in-contractible-or-compact-objects} and
 since $G$ is in particular $\sigma$-compact $C(G,EA)=E_{G}(A)$ is also
 metrizable. Thus the above crossed module is a crossed module of
 metrizable topological groups. In particular, if we take a generator
 $[\alpha]\in H^{3}_{SM}(G,U(1))\cong H^{4}(G,\Z)\cong \Z$ for $G$ a
 simple compact 1-connected Lie group, then the crossed module
 \begin{equation*}
  U(1) \to E_{G}(U(1)) \to B_{G}(U(1))\times_{\alpha}G \to G
 \end{equation*}
 gives yet another (topological) model for the string 2-group.
\end{remark}

\begin{remark}
 (cf.\ \cite[Def.\ 19]{Pries09Smooth-group-cohomology}) The locally
 continuous cohomology can be topologized as follows. For an open
 identity neighborhood $U\se G^{\ktimes[n]}$ we have the bijection
 \begin{equation*}
  C_{U}^{n}(G,A):=\{f\from G^{n}\to A:\left.f\right|_{U}\text{ is continuous}\}\cong
  C(U,A)\times A^{G^{n}\setminus U}.
 \end{equation*}
 This carries a natural topology coming from
 $\kC(U,A)\ktimes A^{G^{n}\setminus U}$, when first endowing
 $A^{G^{n}\setminus U}$ with the product topology and then taking the
 induced compactly generated topology. If $U\se V$, then the inclusion
 $C_{U}^{n}(G,A)\hookrightarrow C^{n}_{V}(G,A)$ is continuous so that
 the direct limit
 \begin{equation*}
  \lim_{\overrightarrow{U\in \mf{U}}} C_{U}^{n}(G,A) \cong C_{\locc}^{n}(G,A)
 \end{equation*}
 carries a natural topology. The differential $\dgp$ is continuous and
 the cohomology groups $H^{n}_{\locc}(G,A)$ inherits the corresponding
 quotient topology.
\end{remark}

\begin{remark}\label{rem:products}
 There is a classical way of constructing {\it products} for some of the
 cohomology theories which we have considered here. Let us recall these
 definitions. The easiest product is the usual {\it cup product} for the
 locally continuous (respectively the locally smooth) group cohomology
 $H^{n}_{\locc}(G,A)$ (respectively $H^{n}_{\locs}(G,A)$) \cite[Ch.\
 VIII.9]{MacLane63Homology}. In the following, we will stick to
 $H^{n}_{\locc}(G,A)$, noting that all constructions carry over word by
 word to $H^{n}_{\locs}(G,A)$ for a Lie group $G$ and a smooth
 $G$-module $A$.

 Suppose that the two $G$-modules $A$ and $A'$ have a tensor product in
 $\cghaus$. The simplicial cup product (see \cite{MacLane63Homology}
 equation (9.7) p. 246) in group cohomology yields a homomorphism
 \begin{equation*}
  \cup:H^p_{\locc}(G,A)\otimes H^q_{\locc}(G,A')\to H^{p+q}_{\locc}(G,A\otimes A'),
 \end{equation*}
 where the $G$-module $A\otimes A'$ is given the diagonal action.

 In case the $G$-module $A$ has its tensor product $A\otimes A$ in
 $\cghaus$ and has a product, i.e. a homomorphism of $G$-modules
 $\alpha:A\otimes A\to A$, we obtain an {\it internal cup product}
 \begin{equation*}
  \cup:H^p_{\locc}(G,A)\otimes H^q_{\locc}(G,A)\to H^{p+q}_{\locc}(G,A)
 \end{equation*}
 by post composing with $\alpha$. The product reads then explicitly for
 cochains $c\in C^p_{\locc}(G,A)$ and $c'\in C^q_{\locc}(G,A)$
 \begin{equation*}
  c\cup c'(g_0,\ldots,g_{p+q})=\alpha(c(g_0,\ldots,g_{p}),c'(g_{p},\ldots, g_{p+q})).
 \end{equation*}
 
 On the other hand, Segal-Mitchison cohomology $H^{n}_{\SM}(G,A)$ is a
 (relative) derived functor, and therefore the setting of \cite[Sect.\
 XII.10]{MacLane63Homology} is adapted. Observe that our choice of exact
 sequences does not satisfy all the demands of a {\it proper class} of
 exact sequences \cite[Sect.\ XII.4]{MacLane63Homology} (it does not
 satisfy the last two demands) and we neither have automatically enough
 proper injectives or projectives. Nevertheless, we have explicit
 acyclic resolutions for each module in $\cghaus$ which are exact
 sequences in our sense. We have the universality property for the
 functor $H^{n}_{\SM}(G,A)$ \cite[Sect.\ XII.8]{MacLane63Homology} by
 Theorem \ref{thm:moores_comparison_theorem}. Therefore we obtain
 products for Segal-Mitchison cohomology by universality as in
 \cite[Th.\ XII.10.4]{MacLane63Homology} for two $G$-modules $A$ and
 $A'$ which have a tensor product in $\cghaus$.

 By the uniqueness statement in \cite[Th.\ XII.10.4]{MacLane63Homology},
 the isomorphism $H^{n}_{\SM}(G,A)\cong H^{n}_{\locc}(G,A)$ respects
 products. Note also that the differentiation homomorphism
 $D_{n}\from H^{n}_{\locs}(G,A)\to H^{n}_{\op{Lie},c}(\fg,\mf{a})$ that we will turn
 to in Remark \ref{rem:connection_to_Lie_algebra_cohomology} is
 compatible with products.
\end{remark}

\begin{tabsection}
 We now give an explicit description of the purely topological
 information contained in a locally continuous cohomology class. If $G$
 is a connected topological group and $A$ is a topological $G$-module,
 then there is an exact sequence
 \begin{equation}\label{eqn:tau2}
  0 \to H^{2}_{\globtop}(G,A)\to  H^{2}_{\loctop}(G,A)\xrightarrow{\tau_{2}} \check{H}^{1}(|G|,\underline{A})
 \end{equation}
 \cite[Sect.\
 2]{Wockel09Non-integral-central-extensions-of-loop-groups}, where
 $\tau_2$ assigns to an abelian extension $A\to \hat{G}\to G$ the
 characteristic class of the underlying principal $A$-bundle. By
 definition, we have that $\im(\tau_2)$ are those classes in
 $\check{H}^{1}(|G|,\underline{A})$ whose associated principal $A$-bundles
 admit a compatible group structure.

 We will now establish a similar behavior of the map $\tau_{n}$ for
 arbitrary $n$.
\end{tabsection}

\begin{proposition}\label{prop:tau_is_delta_functor}
 Let $G$ be a connected topological group and $A$ be a topological
 $G$-module. Suppose that the cocycle $f\in C^{n}_{\loctop}(G,A)$ is
 continuous on the identity neighborhood $U\se G^{n}$ and let $V\se G$ be open
 such that $e\in V$ and $V^{2}\times \ldots\times V^{2}\se U$. Then the
 map
 \begin{multline*}
  \tau(f)_{g_{1},\ldots,g_{n}}\from g_{1}V\cap \ldots\cap g_{n}V\to A,\quad
  x\mapsto 
  g_{1}^{~}.		f(g_{1}^{-1}g_{2}^{~},\ldots,g_{n-1}^{-1}g_{n}^{~},g_{n}^{-1}x) 
  -(-1)^{n}	f(g_{1}^{~},g_{1}^{-1}g_{2}^{~},\ldots,g_{n-1}^{-1}g_{n}^{~})
 \end{multline*}
 defines a continuous \v{C}ech $(n-1)$-cocycle on the open cover
 $(gV)_{g\in G}$. Moreover, this induces a well-defined map
 \begin{equation*}
  \tau_{n}\from H^{n}_{\loctop}(G,A)\to \check{H}^{n-1}(|G|,\underline{A}),\quad
  [f]\mapsto [\tau(f)]
 \end{equation*}
 which is a morphism of $\delta$-functors.
\end{proposition}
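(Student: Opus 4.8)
The plan is to pass to homogeneous coordinates, where the formula for $\tau$ becomes transparent (this also exhibits the degree-$2$ picture of Remark \ref{rem:morphism_from_locally_continuous_to_Cech_cohomology} as a special case). For $f\in C^{n}_{\loctop}(G,A)$ write $F(x_{0},\ldots,x_{n}):=x_{0}.f(x_{0}^{-1}x_{1},\ldots,x_{n-1}^{-1}x_{n})$ for the associated homogeneous cochain; it is $G$-equivariant, it is continuous at $(x_{0},\ldots,x_{n})$ whenever $(x_{0}^{-1}x_{1},\ldots,x_{n-1}^{-1}x_{n})\in U$, and $f\mapsto F$ intertwines $\dgp$ with the homogeneous coboundary $(\delta F)(x_{0},\ldots,x_{n+1})=\sum_{j}(-1)^{j}F(x_{0},\ldots,\wh{x_{j}},\ldots,x_{n+1})$. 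A direct substitution then gives
\begin{equation*}
 \tau(f)_{g_{1},\ldots,g_{n}}(x)=F(g_{1},\ldots,g_{n},x)-(-1)^{n}F(e,g_{1},\ldots,g_{n}).
\end{equation*}
First I would note that $g_{n}^{-1}x\in V$ and, after replacing $V$ by $V\cap V^{-1}$ (which only refines the cover $(gV)_{g\in G}$), $g_{i}^{-1}g_{j}=(g_{i}^{-1}x)(g_{j}^{-1}x)^{-1}\in V^{2}$ for all $i,j$ whenever $x\in g_{1}V\cap\ldots\cap g_{n}V$; hence the quotient tuple lies in $V^{2}\times\cdots\times V^{2}\se U$, so the first summand above is continuous in $x$ and the second is constant, and $\tau(f)_{g_{1},\ldots,g_{n}}$ is a section of $\underline{A}$ over $g_{1}V\cap\ldots\cap g_{n}V$.

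For the \v{C}ech cocycle identity, expanding $\check{\delta}\,\tau(f)$ and re-assembling the alternating sums as the two instances $(\delta F)(g_{0},\ldots,g_{n},x)$ and $(\delta F)(e,g_{0},\ldots,g_{n})$ of the homogeneous coboundary yields
\begin{equation*}
 (\check{\delta}\,\tau(f))_{g_{0},\ldots,g_{n}}(x)=(\delta F)(g_{0},\ldots,g_{n},x)+(-1)^{n}(\delta F)(e,g_{0},\ldots,g_{n}),
\end{equation*}
which vanishes since $\dgp f=0$ forces $\delta F=0$. Running the identical computation with $f$ replaced by $\dgp h$ for $h\in C^{n-1}_{\loctop}(G,A)$ (so that $F$ is replaced by $\delta H$, with $H$ the homogeneous cochain of $h$) shows $\tau(\dgp h)=\check{\delta}\bigl(\rho+(-1)^{n}\rho'\bigr)$, where $\rho_{g_{1},\ldots,g_{n-1}}(x):=H(g_{1},\ldots,g_{n-1},x)$ and $\rho'_{g_{1},\ldots,g_{n-1}}:=H(e,g_{1},\ldots,g_{n-1})$ are the evident \v{C}ech $(n-2)$-cochains, continuous on a sufficiently fine cover. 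Thus $[\tau(f)]$ depends only on the class of $f$; and since the formula for $\tau(f)$ does not mention $V$ at all, shrinking $V$ is precisely the refinement map of covers, and any two admissible $V$ have a common refinement of the form $(gW)_{g\in G}$, so the class $[\tau(f)]$ in $\check{H}^{n-1}(G,\underline{A})$ is independent of the choices of $U$ and $V$. Naturality of the resulting homomorphism $\tau_{n}$ in the coefficient module is immediate from the formula, since module morphisms are $G$-equivariant.

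Finally, let $A\xrightarrow{\alpha}B\xrightarrow{\beta}C$ be a short exact sequence of topological $G$-modules that is a locally trivial principal $A$-bundle. Then $\underline{A}\to\underline{B}\to\underline{C}$ is a short exact sequence of sheaves on $G$ (the local sections of $\beta$ make $\underline{B}\to\underline{C}$ surjective), so both $H^{\bullet}_{\loctop}(G,\,\cdot\,)$ (Remark \ref{rem:alternative-locally-continuous-cohomology}) and $\check{H}^{\bullet}(G,\underline{(\,\cdot\,)})$ are $\delta$-functors. Given a cocycle $f\in C^{n}_{\loctop}(G,C)$, lift it as in Remark \ref{rem:long_exact_coefficient_sequence} to $\wt{f}\in C^{n}_{\loctop}(G,B)$; then $\dgp\wt{f}$ takes values in $\alpha(A)$ and represents $\delta[f]$. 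With $\wt{F}$ the homogeneous cochain of $\wt{f}$, the \v{C}ech cochain $\mu_{g_{1},\ldots,g_{n}}(x):=\wt{F}(g_{1},\ldots,g_{n},x)-(-1)^{n}\wt{F}(e,g_{1},\ldots,g_{n})$ is a continuous $\underline{B}$-valued lift of $\tau(f)$ over a sufficiently fine cover, whence $\partial\,\tau_{n}[f]=[\check{\delta}\mu]$; and the computation of the previous paragraph, applied to $\wt{F}$, gives
\begin{equation*}
 (\check{\delta}\mu)_{g_{0},\ldots,g_{n}}(x)=(\delta\wt{F})(g_{0},\ldots,g_{n},x)+(-1)^{n}(\delta\wt{F})(e,g_{0},\ldots,g_{n}),
\end{equation*}
which is exactly the value of $\tau(\dgp\wt{f})$ on that cover. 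Hence $\partial\,\tau_{n}[f]=\tau_{n+1}(\delta[f])$, so $\tau$ is a morphism of $\delta$-functors. The only delicate point throughout is the bookkeeping of identity neighborhoods: each lift (of a cochain to $B$, or of a \v{C}ech cochain to $\underline{B}$) may force a smaller $V$; since such a passage is the refinement map of covers and all admissible $V$ share a common refinement, it does not affect the cohomology classes involved, but carrying this out consistently is the main, if mild, obstacle.
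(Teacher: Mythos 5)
Your proof is correct and follows essentially the same route as the paper's: continuity from the $V^{2}$-condition, the cocycle and coboundary identities by sign bookkeeping (your $\rho+(-1)^{n}\rho'$ is literally the paper's cochain $\rho(b)$), and compatibility with the connecting homomorphisms by lifting $f$ to $\wt{f}$ and identifying $\check{\delta}$ of the lifted \v{C}ech cochain with $\tau(\dgp \wt{f})$. The only difference is that you package everything through the homogeneous cochain $F$, which turns the paper's three separate term-by-term cancellations into instances of the single identity $(\check{\delta}\,\tau(f))_{g_{0},\ldots,g_{n}}(x)=(\delta F)(g_{0},\ldots,g_{n},x)+(-1)^{n}(\delta F)(e,g_{0},\ldots,g_{n})$ --- a genuine streamlining of the bookkeeping, but not a different proof.
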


\begin{proof}
 We first note that $\tau(f)_{g_{1},\ldots,g_{n}}$ depends continuously
 on $x$. Indeed, the first term depends continuously on $x$ since
 $g_{1}V\cap \ldots\cap g_{n}V\neq \emptyset$ implies that
 $g_{k-1}^{-1}g_{k}^{~}\in V^{2}$ and $f$ is continuous on
 $V^{2}\times\ldots\times V^{2}$ by assumption. Since the second term
 does not depend on $x$, this shows continuity. Now the cocycle identity
 for $f$, evaluated on
 $(g_{1}^{~},g_{1}^{-1}g_{2}^{~},\ldots,g_{n-1}^{-1}g_{n}^{~},g_{n}^{-1}x)$,
 shows that $\tau(f)_{g_{1},\ldots,g_{n}}(x)$ may also be written as
 $(\check \delta (\kappa(f)))_{g_{1},\ldots,g_{n}}(x) $, where
 \begin{equation*}
  \kappa(f)_{g_{2},\ldots,g_{n}}(x):=f(g_{2}^{~},g_{2}^{-1}g_{3}^{~},\ldots,g_{n}^{-1}x).
 \end{equation*}
 Note that $\kappa(f)_{g_{2},\ldots,g_{n}}$ does not depend continuously
 on $x$ and thus the above assertion does not imply that $\tau(f)$ is a
 coboundary. However, $\check{\delta}^{2}=0$ now implies that $\tau(f)$
 is a cocycle.

 Clearly, the class $[\tau(f)]$ in
 $\check{H}^{n-1}(|G|,\underline{A})$ does not depend on the choice of $V$
 since another such choice $V'$ yields a cocycle given by the same
 formula on the refined cover $(g(V\cap V'))_{g\in G}$. Moreover, if $f$
 is a coboundary, i.e., $f=\dgp b$ for $b\in C^{n-1}_{\locc}(G,A)$
 (where we assume w.l.o.g. that $b$ is also continuous on
 $V^{2}\times \ldots\times V^{2}$), then we set
 \begin{equation*}
  \rho(b)_{g_{1},\ldots,g_{n-1}}(x):=
  g_{1}.	b(g_{1}^{-1}g_{2}^{~},\ldots,g_{n-1}^{-1}x)+(-1)^{n}
  b(g_{1}^{~},g_{1}^{-1}g_{2},\ldots,g_{n-2}^{-1}g_{n-1}^{~}).
 \end{equation*}
 As above, this defines a continuous function on
 $g_{1}V\cap \ldots\cap g_{n-1}V\neq \emptyset$ and thus a \v{C}ech
 cochain. A direct calculation shows that
 $\check{\delta}(\rho(f))=\tau(f)$ and thus that the class $[\tau(f)]$
 only depends on the class of $f$.

 We now turn to the second claim, for which we have to check that for
 each exact sequence $A\hookrightarrow B\xrightarrow{q} C$ of
 topological $G$-modules the diagram
 \begin{equation*}
  \xymatrix{
  H^{n}_{\locc}(G,C) 
  \ar[r]^{\delta_{n}}\ar[d]^{\tau_{n}}&
  H^{n+1}_{\locc}(G,A)
  \ar[d]^{\tau_{n+1}}\\
  \check{H}^{n-1}(|G|,\underline{C})
  \ar[r]^{{\delta}_{n-1}}&
  \check{H}^{n}(|G|,\underline{A})
  }
 \end{equation*}
 commutes. For this, we recall that $\delta_{n}$ is constructed by
 choosing for $[f]\in H^{n}_{\locc}(G,C)$ a lift
 $\wt{f}\from G^{n}\to B$ and then setting
 $\delta_{n}([f])=[\dgp \wt{f}]$. After possibly shrinking $V$, we can
 assume that $f$ is continuous on $V^{2}\times \ldots\times V^{2}$ ($n$
 factors) and that $\dgp \wt{f}$ is continuous on
 $V^{2}\times \ldots\times V^{2}$ ($n+1$ factors).

 Since $q$ is a homomorphism, $\wt{f}$ also gives rise to lifts
 \begin{equation*}
  \wt{\tau(f)}_{g_{1},\ldots,g_{n}}(x):=
  g_{1}^{~}.		\wt{f}(g_{1}^{-1}g_{2}^{~},\ldots,g_{n-1}^{-1}g_{n}^{~},g_{n}^{-1}x) 
  -(-1)^{n}	\wt{f}(g_{1}^{~},g_{1}^{-1}g_{2}^{~},\ldots,g_{n-1}^{-1}g_{n}^{~})
 \end{equation*}
 of $\tau(f)_{g_{1},\ldots,g_{n}}$, which obviously depends continuously
 on $x$ on $g_{1}V\cap \ldots \cap g_{n}V$. Thus we have that
 ${\delta}_{n-1}(\tau_{n}([f]))$ is represented by the \v{C}ech cocycle
 \begin{equation*}
  \check{\delta}(\wt{\tau(f)})_{g_{0},\ldots,g_{n}}.
 \end{equation*}
 On the other hand, $\tau_{n+1}(\delta_{n}([f]))$ is represented by
 $\tau(\dgp \widetilde{f})_{g_{0},\ldots,g_{n}}$, whose value on $x$ is
 given by
 \begin{gather*}
  g_{0}^{~}.		\dgp \widetilde{f}(g_{0}^{-1}g_{1}^{~},\ldots,g_{n-1}^{-1}g_{n}^{~},g_{n}^{-1}x) 
  -(-1)^{n+1}	\dgp \widetilde{f}(g_{0}^{~},g_{0}^{-1}g_{1}^{~},\ldots,g_{n-1}^{-1}g_{n}^{~})=\\
  g_{0}^{~}.\Big[g_{0}^{-1}g_{1}^{~}.\widetilde{f}(g_{1}^{-1}g_{2}^{~},\ldots,g_{n}^{-1}x)\pm\ldots
  \dashuline{+(-1)^{k}\widetilde{f}(g_{0}^{-1}g_{1}^{~},\ldots,g_{k-1}^{-1}g_{k+1}^{~},\ldots,g_{n}^{-1}x)}\pm\ldots\\
  +\underline{(-1)^{n+1} \widetilde{f}(g_{0}^{-1}g_{1}^{~},\ldots,g_{n-1}^{-1}g_{n}^{~})}\Big]
  -(-1)^{n+1}\Big[
  g_{0}^{~}.\underline{\widetilde{f}(g_{0}^{-1}g_{1}^{~},\ldots,g_{n-1}^{-1}g_{n}^{~})}
  \pm\ldots\\ \dashuline{-(-1)^{k}\widetilde{f}(g_{0}^{~},g_{0}^{-1}g_{1}^{~},\ldots,g_{k-1}^{-1}g_{k+1}^{~},\ldots,g_{n-1}^{-1}g_{n}^{~})}\pm\ldots
  +(-1)^{n+1}\widetilde{f}(g_{0}^{~},g_{0}^{-1}g_{1}^{~},\ldots,g_{n-2}^{-1}g_{n-1}^{~})
  \Big]
 \end{gather*}
 The underlined terms cancel and the sum of the dashed terms gives
 $(-1)^{k}\wt{\tau(f)}_{g_{0},\ldots,\widehat{g_{k}},\ldots,g_{n}}(x)$.
 This shows that
 \begin{equation*}
  \check{\delta}(\wt{\tau(f)})_{g_{1},\ldots,g_{n}}(x)  =  \tau(\dgp \widetilde{f})_{g_{1},\ldots,g_{n}}(x).
 \end{equation*}
\end{proof}

\begin{tabsection}
 We will now identify the map $\tau$ with one of the edge homomorphisms
 in the spectral sequence associated to $H^{n}_{\simpc}(G,A)$.
\end{tabsection}

\begin{proposition}\label{prop:tau_is_edge}
 For $n\geq 1$ the edge homomorphism of the spectral sequence
 \eqref{eqn:spectral_sequence} induces a homomorphism
 \begin{equation*}
  \op{edge}_{n+1}\from H^{n+1}_{\simpc}(G,A)\to H^{1+n}_{\simpc}(G,A)/\mathcal{F}^{2}H^{2+n}_{\simpc}(G,A)\cong E_{\infty}^{1,n}\to E_{1}^{1,n}\cong H^{n}_{\Sh}(G,\underline{A}),
 \end{equation*}
 where $\mathcal{F}$ denotes the standard column filtration (cf.\ Remark
 \ref{rem:double_complex_for_spectral_sequence}). If, moreover,
 $G^{\ptimes[n]}$ is compactly generated, paracompact and admits good
 covers for all $n\geq 1$ and $A$ is a topological $G$-module, then the
 diagram
 \begin{equation}\label{eqn:edge_morphism_commuting_square}
  \vcenter{\xymatrix{
  H^{n+1}_{\simpc}(G,A)
  \ar[d]^{\cong}\ar[rr]^(.54){\op{edge}_{n+1}}&&
  H^{n}_{\Sh}(G,\underline{A})
  \ar[d]^{\cong}\\
  H^{n+1}_{\locc}(G,A)
  \ar[rr]^{\tau_{n+1}}&&
  \check{H}^{n}(|G|,\underline{A})
  }}
 \end{equation}
 commutes.
\end{proposition}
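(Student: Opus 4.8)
The plan is to work, for the second (harder) half, in the \v{C}ech model furnished by the corollaries of the Comparison Theorem. Fix a good cover $\mc{U}_\bullet$ of $BG_\bullet$, so that $H^m_{\simpc}(G,A)\cong\check H^m(\mc{U}_\bullet,A^\bullet_{\globc})$ and $H^m_{\locc}(G,A)\cong\check H^m(\mc{U}_\bullet,A^\bullet_{\locc})$, with the left vertical isomorphism of \eqref{eqn:edge_morphism_commuting_square} being \eqref{eqn:morphism_from_cech_to_locally_continuous_cohomology} and the right vertical being the \v{C}ech--sheaf comparison on $BG_1=G$. Up to this identification the spectral sequence \eqref{eqn:spectral_sequence} is the column-filtration spectral sequence of $\check C^{\bullet,\bullet}(\mc{U}_\bullet,A^\bullet_{\globc})$, with $E_1^{p,q}=\check H^q(G^{\ktimes[p]},\underline A)$. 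For the first assertion: since $BG_0=\op{pt}$, the sheaf $E^0$ is flasque and $E_1^{0,q}=E_2^{0,q}=0$ for $q\geq1$, hence $\mathcal{F}^1H^{1+n}_{\simpc}=\mathcal{F}^0H^{1+n}_{\simpc}=H^{1+n}_{\simpc}(G,A)$, so the quotient by $\mathcal{F}^2$ is $E_\infty^{1,n}$ and we get a surjection onto it. For $n\geq1$ no differential enters column $1$ from the left ($E_1^{0,n}=0$, and for $r\geq2$ the source column $1-r$ is negative), so $E_\infty^{1,n}\subseteq\cdots\subseteq E_2^{1,n}\subseteq E_1^{1,n}=H^n_{\Sh}(G,\underline A)$; composing gives $\op{edge}_{n+1}$. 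Concretely, every class in $\check H^{1+n}(\mc{U}_\bullet,A^\bullet_{\globc})$ is represented by a total cocycle $\Phi$ with vanishing column-$0$ component (automatic, column $0$ sitting over a point), $\Phi^{1,n}$ is then a \v{C}ech $n$-cocycle on $G$ with values in $\underline A$, and $\op{edge}_{n+1}[\Phi]=[\Phi^{1,n}]$.

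Next I would observe that both horizontal maps in \eqref{eqn:edge_morphism_commuting_square} are morphisms of $\delta$-functors: for $\tau$ this is Proposition \ref{prop:tau_is_delta_functor}; for $\op{edge}$ it follows from naturality of \eqref{eqn:spectral_sequence} in the coefficient sheaf, checked by chasing the explicit \v{C}ech description of $\op{edge}$ through the zig-zag associated with a short exact sequence. Then I would dimension-shift: embed $A$ in $E_G(A)=\kC(G,EA)$ with quotient $B_G(A)$. Since $EA$, and hence $E_G(A)=\kC(G,EA)$, is a (loop) contractible soft module, $E_G(A)$ is acyclic in positive degrees for $H^\bullet_{\globc}$, $H^\bullet_{\simpc}$, $H^\bullet_{\locc}$ and for $\check H^\bullet(G,\underline{\mathinner{\cdot}})$ (Propositions \ref{prop:soft_modules_are_acyclic_for_globcont}, \ref{prop:simp=glob_for_contractible_coefficients}, \ref{prop:loc=glob_for_contractible_coefficients}, plus softness of $\underline{E_G(A)}$ on the paracompact space $G$). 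Hence for $n\geq1$ the connecting maps $\delta\colon H^{n+1}(G,B_G(A))\to H^{n+2}(G,A)$ on the $\simpc$- and $\locc$-sides, and $\delta\colon\check H^n(G,\underline{B_G(A)})\to\check H^{n+1}(G,\underline A)$, are isomorphisms; they are compatible with the vertical identifications (all induced by the natural sheaf maps $A^\bullet_{\globc}\to A^\bullet_{\locc}$, resp.\ \v{C}ech--sheaf comparison, which are natural in $A$); and since both $\tau$ and $\op{edge}$ commute with $\delta$, the square for $(G,A)$ in degree $n+2$ follows from the square for $(G,B_G(A))$ in degree $n+1$. By induction on $n$ this reduces everything to the base case of the square in degree $2$.

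That base case is exactly what Remark \ref{rem:morphism_from_locally_continuous_to_Cech_cohomology} records: a locally continuous $2$-cocycle $f$ with $\dgp f=0$ corresponds, under $H^2_{\locc}(G,A)\cong\check H^2(\mc{U}_\bullet,A^\bullet_{\globc})\cong H^2_{\simpc}(G,A)$, to the class of the total $2$-cocycle whose $(2,0)$-component is $\mu(f)$ and whose $(1,1)$-component is the \v{C}ech $1$-cocycle $\tau(f)$ of Proposition \ref{prop:tau_is_delta_functor}; reading off the column-$1$ component shows $\op{edge}_2$ of this class equals $[\tau(f)]=\tau_2[f]$.

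I expect the main obstacle to be the $\delta$-functoriality of $\op{edge}$ — the sign and zig-zag bookkeeping needed to match the spectral-sequence connecting homomorphism on the abutment with the $E_1$-level (sheaf-cohomology) connecting homomorphism at the corner $(1,n)$ — together with verifying that the abstract isomorphism $H^2_{\simpc}(G,A)\cong H^2_{\locc}(G,A)$ coincides with the hands-on one of Remark \ref{rem:morphism_from_locally_continuous_to_Cech_cohomology}; once these are settled the dimension shift is routine. An alternative that bypasses the induction is a single explicit staircase computation in $\check C^{\bullet,\bullet}(\mc{U}_\bullet,A^\bullet_{\locc})$: descending from $\Phi^{1,n}$ to the bottom row via the maps $\kappa$ and $\rho$ of Proposition \ref{prop:tau_is_delta_functor} produces precisely a locally continuous $(n+1)$-cocycle $f$ with $\tau(f)$ cohomologous to $\Phi^{1,n}$ — more concrete, but demanding the same sign discipline.
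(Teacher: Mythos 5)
Your proposal is correct and follows essentially the same route as the paper's proof: establish the form of $\op{edge}_{n+1}$ from the vanishing of $E_{1}^{0,q}$ over $BG_{0}=\op{pt}$, observe that both horizontal maps are morphisms of $\delta$-functors, reduce to $n=1$ using acyclicity of the soft modules $E_{G}(A)$, and settle $n=1$ via the explicit construction of Remark \ref{rem:morphism_from_locally_continuous_to_Cech_cohomology}. The only cosmetic difference is that you unroll the dimension-shifting induction by hand where the paper simply invokes the uniqueness assertion of Theorem \ref{thm:moores_comparison_theorem}, which encapsulates exactly that argument.
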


\begin{proof}
 We first note that $H^{n}_{\loctop}(G,A)=H^{n}_{\locc}(G,A)$ under the
 above assumptions. Since $BG_{0}=\op{pt}$, we have
 $E_{1}^{0,q}=H^{q}_{\Sh}(\op{pt},\underline{A})=0$ for all $q\geq 1$
 and thus the edge homomorphism $E_{\infty}^{1,p}\to E_{1}^{1,p}$. Since
 we have $\mathcal{F}^{p}H^{p+q}_{\simpc}(G,A)=H^{p+q}_{\simpc}(G,A)$
 for $p=0,1$, $q\geq 1$ this gives the desired form of
 $\op{edge}_{q+1}$. Since this construction commutes with the connecting
 homomorphisms, it is a morphism of $\delta$-functors. Moreover, the
 isomorphism
 $H^{n}_{\Sh}(|G|,\underline{\mathinner{\:\cdot\:}})\cong \check{H}^{n}(|G|,\underline{\mathinner{\:\cdot\:}})$
 is even an isomorphism of $\delta$-functors. In virtue of the
 uniqueness assertion for morphisms of $\delta$-functors from Theorem
 \ref{thm:moores_comparison_theorem}, it thus remains to verify that
 that \eqref{eqn:edge_morphism_commuting_square} commutes for $n=1$.

 The construction from Remark
 \ref{rem:morphism_from_locally_continuous_to_Cech_cohomology} gives an
 isomorphism
 $H^{2}_{\loctop}(G,A)\cong \check{H}^{2}(\mc{U}_{\bullet},A_{\locc}^{\bullet})$,
 where $\mc{U}_{\bullet}$ is a good cover of $BG_{\bullet}$ chosen such
 that $\mc{U}_{k}$ refines the covers of $G^{k}$ constructed there.
 Since this construction commutes with the connecting homomorphisms, the
 isomorphism
 $H^{2}_{\loctop}(G,A)\cong \check{H}^{2}(\mc{U}_{\bullet},A_{\locc}^{\bullet})$
 is indeed the one from the unique isomorphism of the corresponding
 $\delta$-functors. Now $\tau_{2}$ coincides with the morphism
 $H^{2}_{\loctop}(G,A)\cong \check{H}^{2}(\mc{U}_{\bullet},A_{\locc}^{\bullet})\to \check{H}^{1}(|G|,A)$,
 given by projecting the cocycle $(\mu,\tau)$ in the total complex of
 $\check{C}^{p,q}(\mc{U}_{\bullet},E^{\bullet})$ to the \v{C}ech cocycle
 $\tau$. Since this is just the corresponding edge homomorphism, the
 diagram \eqref{eqn:edge_morphism_commuting_square} commutes for $n=1$.
\end{proof}

\begin{remark}
 In case the action of $G$ on $A$ is trivial, Proposition
 \ref{prop:tau_is_edge} also holds for $n=0$. Indeed, then the
 differential $A\cong E_{1}^{0,0}\to E_{1}^{1,0}\cong C^{\infty}(G,A)$,
 which is given by assigning the principal crossed homomorphism to an
 element of $A$, vanishes. This shows commutativity of
 \eqref{eqn:edge_morphism_commuting_square} also for $n=0$.
\end{remark}

\begin{remark}
 The other edge homomorphism is induced from the identification
 $ C^{n}_{\globc}(G,A)\cong H^{0}_{\Sh}(G^{n},A)\cong E_{1}^{n,0}$,
 which shows $E_{2}^{n,0}\cong H^{n}_{\globc}(G,A)$. It coincides with
 the morphism $H^{n}_{\globc}(G,A)\to H^{n}_{\locc}(G,A)$ induced by the
 inclusion $C^{n}_{\globc}(G,A)\hookrightarrow C^{n}_{\locc}(G,A)$ (cf.\
 also \cite[Remarks in \S3]{Segal70Cohomology-of-topological-groups}).
\end{remark}

The following is a generalization of \eqref{eqn:tau2} in case $A$ is
discrete.

\begin{corollary}\label{cor:injectivity_of_tau}
 Suppose that $n\geq 1$, $G$ is $(n-1)$-connected, $A$ is a discrete $G$-module
 and that $G^{\ptimes[m]}$ is compactly generated, paracompact and admits good
 covers for all $m\geq 1$. Then
 $\tau_{n+1}\from H_{\locc}^{n+1}(G,A)\to \check{H}^{n}(|G|,\underline{A})$ is
 injective.
\end{corollary}

\begin{proof}
 If $G$ is $(n-1)$-connected, and $A$ is discrete, then $E_{1}^{p,q}$ of
 the spectral sequence \eqref{eqn:spectral_sequence} vanishes if
 $q\leq n-1$. Thus
 $E_{\infty}^{1,n-1}=\ker(d_{1}^{1,n-1})\se E_{1}^{1,n-1}\cong \check{H}^{n-1}(|G|,\underline{A})$
 and $\op{edge}_{n}$ coincides with the embedding
 \begin{equation*}
  H^{n}_{\locc}(G,A)\cong H^{n}_{\simpc}(G,A)\cong E_{\infty}^{1,n-1}\hookrightarrow E_{1}^{1,n-1}\cong \check{H}^{n-1}(|G|,\underline{A}).
 \end{equation*}
\end{proof}

\begin{remark}\label{rem:string_cocycle_2}
 An explicit analysis of the differentials of the spectral sequence
 \eqref{eqn:spectral_sequence} shows that for discrete $A$ with trivial
 $G$-action and $(n-1)$-connected $G$ the image of
 $\tau_{n+1}\from H^{n+1}_{\locc}(G,A)\to \check{H}^{n}(|G|,\underline{A})$
 consists of those cohomology classes $c\in\check{H}^{n}(|G|,\underline{A})$
 which are \emph{primitive}, i.e., for which
 \begin{equation*}
  \pr_{1}^{*}c \otimes \pr_{2}^{*}c= \mu^{*}c.
 \end{equation*}
 Since the primitive elements generate the rational cohomology of a compact Lie
 group $G$ \cite[p.\ 167, Thm.\
 IV]{GreubHalperinVanstone73Connections-curvature-and-cohomology.-Vol.-II:-Lie-groups-principal-bundles-and-characteristic-classes},
 it follows that all non-torsion elements in the lowest cohomology degree are
 primitive in this case.
 
 In particular, if $G$ is a compact, simple and $1$-connected (thus
 automatically $2$-connected), the generator of
 $\check{H}^{2}(|G|,\underline{U(1)})\cong \check{H}^{3}(|G|,\underline{\Z})\cong \Z$
 is primitive and thus
 $\tau_{4}\from H^{4}_{\locc}(G,\Z)\to \check{H}^{3}(|G|,\underline{\Z})$ is an
 isomorphism. Since the diagram
 \begin{equation*}
  \xymatrix{
  H^{4}_{\locc}(G,\Z)\ar[r]^{\tau_{4}^{\Z}}\ar[d]^{\cong}&\check{H}^{3}(|G|,\underline{\Z})\ar[d]^{\cong}\\
  H^{3}_{\locc}(G,U(1)) \ar[r]^{\tau_{3}^{U(1)}}&\check{H}^{2}(|G|,\underline{U(1)})
  }
 \end{equation*}
 commutes by Proposition \ref{prop:tau_is_delta_functor}, this shows that
 $\tau_{3}^{U(1)}$ is also an isomorphism. Since the string class
 $[\sigma_{3}]$ from Example \ref{ex:string_cocycles} maps under $\tau_{3}$ to
 a generator
 \cite{BrylinskiMcLaughlin93A-geometric-construction-of-the-first-Pontryagin-class,Chatzigiannis12A-generator-in-degree-3-oft-he-Cech-cohomology-of-simple-compact-Lie-groups},
 this shows that $[\sigma_{3}]$ gives indeed a generator of
 $H^{3}_{\locc}(G,U(1))$, and $[\theta_{3}]$ gives a generator of
 $H^{4}_{\locc}(G,\Z)$.
\end{remark}

\begin{remark}\label{rem:connection_to_Lie_algebra_cohomology}
 One reason for the importance of \emph{locally smooth} cohomology is that it
 allows for a direct connection to Lie algebra cohomology and thus may be
 computable in algebraic terms. This relation is induced by the differentiation
 homomorphism
 \begin{equation*}
  H^{n}_{\locs}(G,A)\xrightarrow{D_{n}} H^{n}_{\op{Lie},c}(\fg,\mf{a}),
 \end{equation*}
 where $H^{n}_{\op{Lie},c}$ denotes the continuous Lie algebra cohomology,
 $\fg$ is the Lie algebra of $G$ and $\mf{a}$ the induced infinitesimal
 topological $\fg$-module (cf.\ \cite[Thm.\
 V.2.6]{Neeb06Towards-a-Lie-theory-of-locally-convex-groups} and \cite[App.\
 B]{Neeb04Abelian-extensions-of-infinite-dimensional-Lie-groups}).
 
 Suppose $G$ is finite-dimensional. Then the kernel of $D_{n}$ consists of
 those cohomology classes $[f]$ that are represented by cocycles vanishing on
 some neighborhood of the identity. For $\Gamma=\{0\}$ this follows directly
 from \cite{Swierczkowski71Cohomology-of-group-germs-and-Lie-algebras}, where
 it is shown that the differentiation homomorphism from the cohomology of
 \emph{locally defined} smooth group cochains to Lie algebra cohomology is an
 isomorphism. Thus if $[f]\in\ker(D_{n})$, then there exists a locally defined
 smooth map $b$ with $\dgp b-f=0$ wherever defined. Since we can extend $b$
 arbitrarily to a locally smooth cochain this shows the claim. In the case of
 nontrivial $\Gamma$ one may deduce the claim from the case of trivial
 $\Gamma$ since $\mf{a}$ and $A=\mf{a}/\Gamma$ are isomorphic as local Lie
 groups so that $A$-valued local cochains can always be lifted to
 $\mf{a}$-valued local cochains. If $A^{\delta}$ denotes $A$ with the discrete
 topology, then the isomorphism
 $H^{n}_{\pi_{1}(BG)}(BG,\underline{A^{\delta}})\cong H^{n}_{\locs}(G,A^{\delta})$
 from Remark \ref{rem:properties} induces an exact sequence
 \begin{equation*}
  H^{n}_{\pi_{1}(BG)}(BG,\underline{A^{\delta}})\to  H^{n}_{\locs}(G,A)\xrightarrow{D_{n}} H^{n}_{\op{Lie},c}(\fg,\mf{a})
 \end{equation*}
 (see also
 \cite{Neeb02Central-extensions-of-infinite-dimensional-Lie-groups,Neeb04Abelian-extensions-of-infinite-dimensional-Lie-groups}
 for an exhaustive treatment of $D_{2}$ for general infinite-dimensional $G$).
 From the van Est spectral sequence
 \cite{Est58A-generalization-of-the-Cartan-Leray-spectral-sequence.-I-II} it
 follows that if $G$ is $n$-connected (more general $G$ may be
 infinite-dimensional with split de Rham complex
 \cite{Beggs87The-de-Rham-complex-on-infinite-dimensional-manifolds}), then
 differentiation induces an isomorphism
 \begin{equation*}
  H^{n}_{\globs}(G,\mf{a})\to H^{n}_{\op{Lie},c}(\fg,\mf{a}).
 \end{equation*}
 For $G$ an $(n-1)$-connected Lie group this is not true any more, for instance
 the Lie algebra $3$-cocycle
 $\langle [\mathinner{\cdot},\mathinner{\cdot}],\mathinner{\cdot}\rangle$ from
 Example \ref{ex:string_cocycles} is nontrivial but $H^{3}_{\globs}(G,\R)$
 vanishes by \cite[Thm.\
 1]{Est55On-the-algebraic-cohomology-concepts-in-Lie-groups.-I-II} for compact
 and connected $G$.
 
 However, there exist integrating cocycles when considering \emph{locally
 smooth} cohomology: If $G$ is an $(n-1)$-connected finite-dimensional Lie
 group and $A\cong{\mathfrak a}/\Gamma$ is a finite-dimensional smooth module
 for $\mf{a}$ a finite-dimensional $G$-module and $\Gamma$ a discrete
 submodule, then
 $D_{n}\from H^{n}_{\locs}(G,A)\to H^{n}_{\op{Lie},c}(\mathfrak{g},\mathfrak{a})$
 is injective and its image consists of those cohomology classes $[\omega]$
 whose associated period homomorphism $\per_{[\omega]}$ \cite[Def.\
 V.2.12]{Neeb06Towards-a-Lie-theory-of-locally-convex-groups} has image in
 $\Gamma$. In fact, $H^{n}_{\locs}(G,A^{\delta})$ vanishes (by Corollary
 \ref{cor:injectivity_of_tau}), and thus $D_{n}$ is injective. Surjectivity of
 $D_{n}$ may be seen from the following standard integration argument. If
 $\omega$ is a Lie algebra $n$-cocycle, then the associated left-invariant
 $n$-form $\omega^{l}$ is closed \cite[Lem.\
 3.10]{Neeb02Central-extensions-of-infinite-dimensional-Lie-groups}. If we make
 the choices of $\alpha_{k}$ for $1\leq k\leq n$ as in Example
 \ref{ex:path-space_construction}, then
 \begin{equation*}
  \Omega(g_{1},\ldots,g_{n}):= \int_{\alpha_{n}(g_{1},\ldots,g_{n})}\omega^{l}
 \end{equation*}
 defines
 \begin{itemize}
  \item a locally smooth group cochain on $G$, since $\alpha_{n}$ depends
        smoothly on $(g_{1},\ldots,g_{n})$ on an identity neighborhood and the
        integral depends smoothly $\alpha_{n}(g_{1},\ldots,g_{n})$.
  \item a group cocycle, since
        \begin{equation*}
         \dgp \Omega (g_{0},\ldots,g_{n})=\int_{\dgp \alpha (g_{0},\ldots,g_{n})}\omega^{l}\in \per_{\omega}(\pi_{n}(G))\se \Gamma.
        \end{equation*}
 \end{itemize}
 A straightforward calculation, similar to the ones in
 \cite{Neeb02Central-extensions-of-infinite-dimensional-Lie-groups} or
 \cite{Neeb04Abelian-extensions-of-infinite-dimensional-Lie-groups} now shows
 that $D_{n}([\Omega])=[\omega]$. We expect that large parts of this remark can
 be generalized to arbitrary infinite-dimensional $G$ with techniques similar
 to those of
 \cite{Neeb02Central-extensions-of-infinite-dimensional-Lie-groups,Neeb02Central-extensions-of-infinite-dimensional-Lie-groups}.
\end{remark}

\section{\texorpdfstring{$\delta$}{δ}-Functors}
\label{sect:universal_delta_functors}

\begin{tabsection}
 In this section we recall the basic setting of (cohomological)
 $\delta$-functors (sometimes also called ``satellites''), as for
 instance exposed in \cite[Chap.\
 3]{CartanEilenberg56Homological-algebra}, \cite[Sect.\
 III.5]{Buchsbaum55Exact-categories-and-duality}, \cite[Sect.\
 2]{Grothendieck57Sur-quelques-points-dalgebre-homologique} or
 \cite[Sect.\
 4]{Moore76Group-extensions-and-cohomology-for-locally-compact-groups.-III}.
 It will be important that the arguments work in more general categories
 than abelian ones, the only thing one needs is a notion of short exact
 sequence.
\end{tabsection}

\begin{definition}
 A \emph{category with short exact sequences} is a category $\cat{C}$,
 together with a distinguished class of composable morphisms
 $A\to B\to C$. The latter are called a short exact sequence. A
 morphisms between $A\to B\to C$ and $A'\to B'\to C'$ consists of
 morphisms $A\to A'$, $B\to B'$ and $C\to C'$ such that the diagram
 \begin{equation*}
  \xymatrix{A\ar[r]\ar[d]&B\ar[r]\ar[d]&C\ar[d]\\A'\ar[r]&B'\ar[r]&C'}
 \end{equation*}
 commutes.

 A (cohomological) $\delta$-functor on a category with short exact
 sequences is a sequence of functors
 \begin{equation*}
  (H^{n}\from \cat{C}\to\cat{Ab})_{n\in\N_{0}}
 \end{equation*}
 such that for each exact $A\to B\to C$ there exist morphisms
 $\delta_{n}\from H^{n}(C)\to H^{n+1}(A)$ turning
 \begin{equation*}
  H^{0}(A)\to H^{0}(B)\to H^{0}(C)\xrightarrow{\delta_{0}} \cdots \xrightarrow{\delta_{n-1}}H^{n}(A)\to H^{n}(B)
  \to H^{n}(C)\xrightarrow{\delta_{n}} \cdots
 \end{equation*}
 into an exact sequence\footnote{Note that we do not require $H^{0}$ to
 be left exact.} and that for each morphism of exact sequences the
 diagram
 \begin{equation}\label{eqn:delta_functor}
  \xymatrix{
  H^{n}(C)\ar[r]^{\delta_{n}}\ar[d]&H^{n+1}(A)\ar[d]\\
  H^{n}(C')\ar[r]^{\delta_{n}}&H^{n+1}(A')}
 \end{equation}
 commutes. A morphisms of $\delta$-functors from $(H^{n})_{n\in\N_{0}}$
 to $(G^{n})_{n\in\N_{0}}$ is a sequence of natural transformations
 $(\varphi^{n}\from H^{n}\Rightarrow G^{n})_{n\in\N_{0}}$ such that for
 each short exact $A\to B\to C$ the diagram
 \begin{equation}\label{eqn:morphism_of_delta_functors}
  \xymatrix{
  H^{n}(C)\ar[r]^{\delta_{n}}\ar[d]^{\varphi^{n}_{C}}&
  H^{n+1}(A)\ar[d]^{\varphi_{A}^{n+1}}\\
  G^{n}(C)\ar[r]^{\delta_{n}}&G^{n+1}(A)}
 \end{equation}
 commutes. An isomorphism of $\delta$-functors is then a morphism for
 which all $\varphi^{n}$ are natural isomorphisms of functors.
\end{definition}

\begin{theorem}\label{thm:moores_comparison_theorem}
 Let $\cat{C}$ be a category with short exact sequences. Let
 $F\from \cat{C}\to\cat{Ab}$, $I\from \cat{C}\to\cat{C}$ and
 $U\from \cat{C}\to\cat{C}$ be functors, $\iota_{A}\from A\to I(A)$ and
 $\zeta_{A}\from I(A)\to U(A)$ be natural such that
 $A\xrightarrow{\iota_{A}} I(A)\xrightarrow{\zeta_{A}} U(A)$ is short
 sequence and let $(H_{n})_{n\in\N_{0}}$ and
 $(G_{n})_{n\in\N_{0}}$ be two $\delta$-functors.
 \begin{enumerate}
     \renewcommand{\labelenumi}{\theenumi}
     \renewcommand{\theenumi}{\arabic{enumi}.}
  \item \label{item:moores_comparison_theorem_1} If $\alpha\from H^{0}\Rightarrow G^{0}$ is a natural
        transformation and $H^{n}(I(A))=0$ for all $A$ and all
        $1\leq n\leq m$, then there exist natural transformations
        $\varphi^{n}\from H^{n}\Rightarrow G^{n}$, uniquely determined
        by requiring that $\varphi^{0}=\alpha$ and that
        \begin{equation*}
         \xymatrix{
         H^{n}(U(A)) \ar[r]^{\delta_{n}}\ar[d]_{\varphi^{n}_{U(A)}} & H^{n+1}(A) \ar[d]^{\varphi^{n+1}_{A}}\\
         G^{n}(U(A)) \ar[r]^{\overline{\delta}_{n}} & G^{n+1}(A)
         }
        \end{equation*}
        commutes for $0\leq n< m$. In particular, if
        $H^{n}(I(A))=0=G^{n}(I(A))$ for all $n\geq 0$, then
        $\varphi^{n}$ is an isomorphism of functors for all $n\in\N$ if
        and only if it is so for $n=0$.
  \item \label{item:moores_comparison_theorem_2} Assume, moreover, that for any short exact sequence
        $A\xrightarrow{f} B\to C$ the morphism $A\to I(B)$ can be
        completed to a short exact sequence $A\to I(B)\to Q_{f}$ such
        that there exist morphisms $U(A)\xrightarrow{\beta_{f}}Q_{f}$
        and $C\xrightarrow{\gamma_{f}} Q_{f}$ making
        \begin{equation}\label{eqn:moores_comparison_theorem}
         \vcenter{  \xymatrix{
         A\ar[r]^{\iota_{A}}\ar@{=}[d] & I(A) \ar[r]^{\zeta_{A}}\ar[d]^{I(f)} & U(A) \ar[d]^{\beta_{f}}\\
         A\ar[r] & I(B)\ar[r] & Q_{f}
         } }\quad\text{ and }\quad
         \vcenter{  \xymatrix{
         A\ar[r]^{f}\ar@{=}[d] & B \ar[r]\ar[d]^{\iota_{B}} & C \ar[d]^{\gamma_{f}}\\
         A\ar[r] & I(B)\ar[r] & Q_{f}
         } }
        \end{equation}
        commute. Then the diagram
        \begin{equation*}
         \xymatrix{
         H^{n}(C) \ar[r]^{\delta_{n}}\ar[d]_{\varphi^{n}_{C}} & H^{n+1}(A) \ar[d]^{\varphi^{k}_{A}}\\
         G^{n}(C) \ar[r]^{\overline{\delta}_{n}} & G^{n+1}(A)
         }
        \end{equation*}
        also commutes for $0\leq m<m$. In particular, if $H^{n}(I(A))=0$
        for all $A$ and all $n\geq 1$, then $(\varphi^{n})_{n\in\N_{0}}$
        is a morphism of $\delta$-functors.
 \end{enumerate}
\end{theorem}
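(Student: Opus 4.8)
The plan is to build the natural transformations $\varphi^{n}$ by induction on $n$ in part~\ref{item:moores_comparison_theorem_1}, using that the connecting homomorphisms of the canonical short exact sequence $A\xrightarrow{\iota_{A}}I(A)\xrightarrow{\zeta_{A}}U(A)$ are surjective (and, in positive degrees, bijective) by the vanishing of $H^{\bullet}(I(A))$; and then to deduce part~\ref{item:moores_comparison_theorem_2} by a diagram chase in which the auxiliary sequence $A\to I(B)\to Q_{f}$ serves as a bridge linking an arbitrary short exact sequence $A\xrightarrow{f}B\to C$ to the canonical one.

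\emph{Part~\ref{item:moores_comparison_theorem_1}.} Put $\varphi^{0}:=\alpha$ and suppose $\varphi^{n}$ is defined for some $0\le n<m$. From the long exact sequence of $(H^{k})$ applied to the canonical sequence and $H^{n+1}(I(A))=0$ (legitimate since $1\le n+1\le m$), the map $\delta_{n}\from H^{n}(U(A))\to H^{n+1}(A)$ is surjective; hence if $\varphi^{n+1}$ is to make the required square commute it must satisfy $\varphi^{n+1}_{A}(\delta_{n}x)=\overline{\delta}_{n}(\varphi^{n}_{U(A)}x)$, which proves uniqueness and which we take as definition. For $n\ge1$ the vanishing $H^{n}(I(A))=0$ makes $\delta_{n}$ injective, so this is well defined; for $n=0$ one has $\ker\delta_{0}=\im\big(H^{0}(\zeta_{A})\big)$, and if $x=H^{0}(\zeta_{A})(y)$ then naturality of $\alpha$ gives $\varphi^{0}_{U(A)}(x)=G^{0}(\zeta_{A})\big(\varphi^{0}_{I(A)}(y)\big)$, which $\overline{\delta}_{0}$ kills because $\overline{\delta}_{0}\circ G^{0}(\zeta_{A})=0$, so $\varphi^{1}_{A}$ is well defined. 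Naturality of $\varphi^{n+1}$ then follows from naturality of $\varphi^{n}$, of $\iota,\zeta$ and of the $\delta$'s, together with surjectivity of $\delta_{n}$. Finally, if $H^{k}(I(A))=0=G^{k}(I(A))$ for all $k$ then every $\delta_{n}$ and $\overline{\delta}_{n}$ is an isomorphism, and the defining relation $\varphi^{n+1}_{A}=\overline{\delta}_{n}\circ\varphi^{n}_{U(A)}\circ\delta_{n}^{-1}$ shows inductively that $\varphi^{n+1}$ is an isomorphism whenever $\varphi^{n}$ is; thus all $\varphi^{n}$ are isomorphisms iff $\varphi^{0}$ is.

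\emph{Part~\ref{item:moores_comparison_theorem_2}.} Fix a short exact sequence $A\xrightarrow{f}B\to C$ and $0\le n<m$, and let $\delta_{n}$, $\delta^{Q}_{n}$, $\delta^{f}_{n}$ denote the connecting maps of $(H^{k})$ for the canonical sequence, for $A\to I(B)\to Q_{f}$, and for $A\xrightarrow{f}B\to C$ respectively (with $\overline{\delta}$ for those of $(G^{k})$). The two diagrams in \eqref{eqn:moores_comparison_theorem} say exactly that $(\id_{A},I(f),\beta_{f})$ and $(\id_{A},\iota_{B},\gamma_{f})$ are morphisms of short exact sequences into $A\to I(B)\to Q_{f}$, so by compatibility~\eqref{eqn:delta_functor} of $\delta$-functors with such morphisms we get $\delta_{n}=\delta^{Q}_{n}\circ H^{n}(\beta_{f})$ and $\delta^{f}_{n}=\delta^{Q}_{n}\circ H^{n}(\gamma_{f})$, and likewise with bars. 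Now given $c\in H^{n}(C)$, choose (by surjectivity of $\delta_{n}$) an $x\in H^{n}(U(A))$ with $\delta_{n}(x)=\delta^{f}_{n}(c)$, so $\varphi^{n+1}_{A}(\delta^{f}_{n}c)=\overline{\delta}_{n}(\varphi^{n}_{U(A)}x)$ by the defining relation. Then $H^{n}(\gamma_{f})(c)-H^{n}(\beta_{f})(x)\in\ker\delta^{Q}_{n}=\im\big(H^{n}(\zeta)\big)$, with $\zeta\from I(B)\to Q_{f}$ the quotient, so it equals $H^{n}(\zeta)(v)$ for some $v$; applying $\varphi^{n}$ (a group homomorphism) and then $\overline{\delta}^{Q}_{n}$ and using $\overline{\delta}^{Q}_{n}\circ G^{n}(\zeta)=0$ yields $\overline{\delta}^{Q}_{n}\big(\varphi^{n}_{Q_{f}}H^{n}(\gamma_{f})c\big)=\overline{\delta}^{Q}_{n}\big(\varphi^{n}_{Q_{f}}H^{n}(\beta_{f})x\big)$. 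By naturality of $\varphi^{n}$ and the factorizations above, the left side is $\overline{\delta}^{f}_{n}(\varphi^{n}_{C}c)$ and the right side is $\overline{\delta}_{n}(\varphi^{n}_{U(A)}x)=\varphi^{n+1}_{A}(\delta^{f}_{n}c)$, which is the desired commutativity; taking $m$ arbitrarily large gives the ``in particular''.

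\emph{Expected obstacle.} The only subtle point is degree~$0$: because $H^{0}$ is not assumed left exact, $\delta_{0}$ need not be injective, so neither the well-definedness step in part~\ref{item:moores_comparison_theorem_1} nor the $n=0$ instance of part~\ref{item:moores_comparison_theorem_2} can be handled by simply inverting a connecting map; both must be routed through the exactness identities $\overline{\delta}_{0}\circ G^{0}(\zeta)=0$ for the relevant $G$-sequences. Everything else is formal once the intermediary $A\to I(B)\to Q_{f}$ has been set up.
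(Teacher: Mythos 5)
Your proposal is correct and follows essentially the same route as the paper: an induction defining $\varphi^{n+1}$ via the surjective connecting map of the canonical sequence $A\to I(A)\to U(A)$ (with well-definedness coming from $\ker\delta_{n}=\im H^{n}(\zeta_{A})$, naturality of $\varphi^{n}$ and $\overline{\delta}_{n}\circ G^{n}(\zeta_{A})=0$), and then part 2 by comparing the canonical sequence and $A\xrightarrow{f}B\to C$ through the bridge $A\to I(B)\to Q_{f}$ using the two morphisms of short exact sequences. Your element-wise chase in part 2 (choosing a lift $x$ with $\delta_{n}(x)=\delta^{f}_{n}(c)$ and killing the difference in $\ker\delta^{Q}_{n}$) is just an unpacked version of the paper's diagram chase, and in fact makes explicit the surjectivity/kernel step that the paper's "the whole diagram commutes" leaves terse.
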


\begin{proof}
 The proof of \cite[Thm.\
 II.5.1]{Buchsbaum55Exact-categories-and-duality} (cf.\ also \cite[Thm.\
 2]{Moore76Group-extensions-and-cohomology-for-locally-compact-groups.-III})
 carries over to this more general setting. The claims are shown by
 induction, so we assume that $\varphi ^{n}$ is constructed up to
 $n\geq 0$. Then we consider for arbitrary $A$ the diagram (recall that 
 $H^{n+1}(I(A))=0$)
 \begin{equation*}
  \vcenter{\xymatrix{
  H^{n}(I(A)) 
  \ar[r] \ar[d]^{\varphi^{n}_{I(A)}} &
  H^{n}(U(A))
  \ar[r]^{\delta_{n}} \ar[d]^{\varphi^{n}_{U(A)}} &
  H^{n+1}(A) 
  \ar[r] &
  0\\
  G^{n}(I(A))
  \ar[r] &
  G^{n}(U(A))
  \ar[r]^{\overline{\delta}_{n}} &
  G^{n+1}(A)
  }},
 \end{equation*}
 which shows that there is a unique
 $\varphi ^{n+1}_{A}\from H^{n+1}(A)\to G^{n+1}(A)$ making this diagram
 commute. To check naturality take $f\from A\to B$. By the construction
 of $\varphi_{A}^{n+1}$, the induction hypothesis and the construction
 of $\varphi_{B}^{n+1}$ the diagrams
 \begin{equation*}
  \vcenter{ \xymatrix{
  H^{n}(U(A))
  \ar[d]^{\varphi_{U(A)}^{n}}\ar[r]^{\delta_{n}^{U(A)}} &
  H^{n+1}(A)
  \ar[d]^{\varphi_{A}^{n+1}}\\
  G^{n}(U(A))
  \ar[r]^{\overline{\delta}_{n}^{U(A)}} &
  G^{n+1}(A)
  }}
  \quad\text{ and }\quad
  \vcenter{\xymatrix{
  H^{n}(U(A))
  \ar[d]^{\varphi_{U(A)}^{n}}\ar[rr]^{H^{n}(U(f))} &&
  H^{n}(U(B))
  \ar[d]^{\varphi_{U(B)}^{n}}\ar[r]^{\delta_{n}^{U(B)}} &
  H^{n+1}(B)
  \ar[d]^{\varphi_{B}^{n+1}}\\
  G^{n}(U(A))
  \ar[rr]^{{G^{n}(U(f))}} &&
  G^{n}(U(B))
  \ar[r]^{\overline{\delta}_{n}^{U(B)}} &
  G^{n+1}(B)
  }}
 \end{equation*}
 commute. Since $(H_{n})_{n\in\N_{0}}$ and $(G_{n})_{n\in\N_{0}}$ are
 $\delta$-functors we know that
 $H^{n+1}(f)\op{\circ} \delta_{n}^{U(A)}= \delta_{n}^{U(B)} \op{\circ} H^{n}(U(f)) $
 and that
 $G^{n+1}(f)\op{\circ} \overline{\delta}_{n}^{U(A)}= \overline{\delta}_{n}^{U(B)} \op{\circ} G^{n}(U(f)) $.
 We thus conclude that
 \begin{equation*}
  \varphi_{B}^{n+1}\op{\circ} H^{n+1}(f) \op{\circ} \delta_{n}^{U(A)}=
  G^{n+1}(f) \op{\circ} \varphi_{A}^{n+1} \op{\circ} \delta_{n}^{U(A)}
 \end{equation*}
 holds. Since $\delta_{n}^{U(A)}$ is an epimorphism this shows
 naturality of $\varphi^{n+1}$ and finishes the proof of the first
 claim.

 To show the second claim we note that the first diagram of
 \eqref{eqn:moores_comparison_theorem} gives rise to a diagram
 \begin{equation*}
  \vcenter{\xymatrix@=1.5em{
  H^{n}(U(A)) 
  \ar[ddd]_{\varphi_{U(A)}^{n}}
  \ar[dr]^(0.6){H^{n}(\beta_{f})} 
  \ar@/^15pt/[drr]^{\delta_{n}^{U(A)}}\\
  & H^{n}(Q_{f})
  \ar[r]^{\delta_{n}^{Q_{f}}}
  \ar[d]_{\varphi_{Q_{f}}^{n}}&
  H^{n+1}(A) 
  \ar[r]
  \ar[d]^{\varphi_{A}^{n+1}}& 
  0\\
  & G^{n}(Q_{f})
  \ar[r]^{\overline{\delta}_{n}^{Q_{f}}} &
  G^{n+1}(A)\\
  G^{n}(U(A))
  \ar[ur]_(0.6){G^{n}(\beta_{f})}
  \ar@/_15pt/[urr]_{{\overline{\delta}_{n}^{U(A)}}}
  }}.
 \end{equation*}
 The outer diagram commutes by construction of $\varphi_{A}^{n+1}$ (see above),
 the already shown naturality of $\varphi^{n}$
 shows that the trapezoid on the left
 commutes and the two triangles are commutative because $H$ and $G$ are 
 $\delta$-functors. This implies that the whole diagram commutes. In particular,
 we have
 $\varphi_{A}^{n+1}\op{\circ}\delta_{n}^{Q_{f}}=\overline{\delta}_{n}^{Q_{f}}\op{\circ} \varphi_{Q_{f}}^{n}$.
 The latter now implies that
 \begin{equation*}
  \xymatrix{
  H^{n}(C)
  \ar[d]^{\delta_{n}^{C}}
  \ar[rr]^{H^{n}(\gamma_{f})} &&
  H^{n}(Q_{f})
  \ar[d]^{\delta_{n}^{Q_{f}}}
  \ar[r]^{\varphi^{n}_{Q_{f}}} &
  G^{n}(Q_{f})
  \ar[d]^{\overline{\delta}_{n}^{Q_{f}}}
  &&
  G^{n}(C)
  \ar[d]^{\overline{\delta}_{n}^{C}}
  \ar[ll]_{G^{n}(\gamma_{f})}\\
  H^{n+1}(A)
  \ar@{=}[rr] &&
  H^{n+1}(A)
  \ar[r]^{\varphi_{A}^{n+1}} &
  G^{n+1}(A)
  &&
  G^{n}(A)
  \ar@{=}[ll]
  }
 \end{equation*}
 commutes and since
 $G^{n}(\gamma_{f})\op{\circ}\varphi_{C}^{n}=\varphi_{Q_{f}}^{n}\op{\circ} H^{n}(\gamma_{f})$
 we eventually conclude that
 \begin{equation*}
  \overline{\delta}_{n}^{C}\op{\circ}\varphi_{C}^{n}=
  \overline{\delta}_{n}^{Q_{f}}\op{\circ}G^{n}(\gamma_{f})\op{\circ} \varphi_{C}^{n}=
  \overline{\delta}_{n}^{Q_{f}}\op{\circ} \varphi_{Q_{f}}^{n}\op{\circ} H^{n}(\gamma_{f})=
  \varphi_{A}^{n+1}\op{\circ}\delta_{n}^{Q_{f}}\op{\circ}H^{n}(\gamma_{f})=
  \varphi_{A}^{n+1}\op{\circ}\delta_{n}^{C}.
 \end{equation*}
\end{proof}

\begin{remark}\label{rem:weaker_comparison_theorem}
 The preceding theorem also shows the following slightly stronger
 statement. Assume that we have for each $\delta$-functor
 $H=(H^{n})_{n\in\N_{0}}$ and $G=(G^{n})_{n\in\N_{0}}$ (defined on the
 \emph{same} category with short exact sequences) \emph{different}
 functors $I$, $U$ and $I'$, $U'$ such that $H^{n}(I(A))=0=G^{n}(I'(A))$
 for all $n\geq 1$ and all $A$. Suppose that the assumptions of Theorem
 \ref{thm:moores_comparison_theorem}
 (\ref{item:moores_comparison_theorem_2}) are fulfilled for one of the
 functors $I$ or $I'$.

 If $\alpha\from H^{0}\rightarrow G^{0}$ is an isomorphism, then the
 natural transformations $\varphi^{n}\from H^{n}\Rightarrow G^{n}$
 (resulting from extending $\alpha$) and
 $\psi^{n}\from G^{n}\Rightarrow H^{n}$ (resulting from extending
 $\alpha^{-1}$) are in fact isomorphisms of $\delta$-functors. This
 follows immediately from the uniqueness assertion since the diagrams
 \begin{equation*}
  \vcenter{\xymatrix{
  H^{n}(U(A)) \ar[r]^{\delta_{n}}\ar[d]_{\varphi^{n}_{U(A)}} & H^{n+1}(A) \ar[d]^{\varphi^{k}_{A}}\\
  G^{n}(U(A)) \ar[r]^{\overline{\delta}_{n}} & G^{n+1}(A)
  }}
  \quad\quad\quad
  \vcenter{\xymatrix{
  G^{n}(U(A)) \ar[r]^{\overline{\delta}_{n}}\ar[d]_{\psi^{n}_{U(A)}} & G^{n+1}(A) \ar[d]^{\psi^{k}_{A}}\\
  H^{n}(U(A)) \ar[r]^{{\delta}_{n}} & H^{n+1}(A)
  }}
 \end{equation*}
 (and likewise for $U'$) commute for arbitrary $A$ due to the property
 of being a $\delta$-functor.
\end{remark}

\begin{remark}
 Usually, one would impose some additional conditions on a category with
 short exact sequences, for instance that it is additive (with zero
 object), that for a short exact sequences $A\to B\to C$ the square
 \begin{equation*}
  \xymatrix{A\ar[r]\ar[d]&0\ar[d]\\B\ar[r]&C }
 \end{equation*}
 is a pull-back and a push-out, that short exact sequences are closed
 under isomorphisms and that certain pull-backs and push-outs exist
 \cite{Buhler10Exact-categories}. These assumptions will then help in
 constructing $\delta$-functors. However, the above setting does not
 require this, all the assumptions are put into the requirements on the
 $\delta$-functor.
\end{remark}

\begin{example}
 Suppose $G$ is paracompact. On the category of $G$-modules in $\cghaus$, we consider the short
 exact sequences $A\xrightarrow{\alpha}B\xrightarrow{\beta}C$ such that
 $\beta$ (or equivalently $\alpha$) has a continuous local section and
 the functor $A\mapsto \check{H}^{n}(|G|,\underline{A})$ (or equivalently
 $A\mapsto H^{n}_{\Sh}(G,\underline{A})$). Then the functors
 $A\mapsto E_{G}(A)$ and $A\mapsto B_{G}(A)$ from Definition
 \ref{def:segalsCohomology} satisfy $\check{H}^{n}(|G|,E_{G}(A))=0$ since
 $E_{G}(A)$ is contractible.
\end{example}

\begin{remark}\label{rem:counterexample_to_tus_comparison_theorem}
 The argument given in the proof of \cite[Prop.\
 6.1(b)]{Tu06Groupoid-cohomology-and-extensions} in order to draw the
 conclusion of the first part of Theorem
 \ref{thm:moores_comparison_theorem} from weaker assumptions is false as
 one can see as follows. First note that the proof only uses
 $I(U(A))\cong U(I(A))$, the more restrictive assumptions on the
 categories to be abelian and on the natural inclusion
 $A\hookrightarrow I(A)$ to satisfy $I(i_{A})=i_{I(A)}$ may be replaced
 by this.

 The requirements of \cite[Prop.\
 6.1(b)]{Tu06Groupoid-cohomology-and-extensions} are satisfied if we set
 $I(A)=E_{G}(A)$, $U(A)=B_{G}(A)$ and $i_{A}$ as in Definition
 \ref{def:segalsCohomology}. In fact, the exactness of the functor $E$
 shows that
 \begin{equation*}
  0\to EA\to E \kC(G,EA) \to E B_{G}(A)\to 0
 \end{equation*}
 is exact and since this sequence has a continuous section by
 \cite[Thm.\ B.2]{Segal70Cohomology-of-topological-groups}, we also have
 that
 \begin{equation*}
  0\to \kC(G,EA)\to \kC(G,E \kC(G,EA)) \to \kC(G,E B_{G}(A))\to 0 
 \end{equation*}
 is exact. Consequently, we have
 \begin{equation*}
  E_{G}(B_{G}(A))=\kC(G,E B_{G}(A))\cong \kC(G,E \kC(G,EA))/\kC(G,EA) =B_{G}(E_{G}(A)).
 \end{equation*}
 
 However, the two sequences of functors
 $A\mapsto H^{n}_{\SM}(G,A)\cong H^{n}_{\locc}(G,A)$ and
 $A\mapsto H^{n}_{\globc}(G,A)$ vanish on $E_{G}(A)$ for $n=1$, but are
 different:
 \begin{itemize}
  \item $H^{2}_{\globc}(G,A)$ is not isomorphic to $H^{2}_{\locc}(G,A)$,
        for instance for $G=C^{\infty}(\bS^{1},K)$ ($K$ compact, simple
        and 1-connected) and $A={U}(1)$.
  \item For non-simply connected $G$, the universal cover gives rise to
        a an element in $H^{2}_{\locc}(G,\pi_{1}(G))$, not contained in
        the image of $H^{2}_{\globc}(G,\pi_{1}(G))$.
  \item The string classes from Example \ref{ex:string_cocycles} gives
        an element in $H^{3}_{\locc}(K,{U}(1))$, not contained in the
        image of $H^{3}_{\globc}(K,{U}(1))$.
 \end{itemize}
 
\end{remark}

\section{Supplements on Segal-Mitchison cohomology}
\label{sect:some_information_on_moore_s_and_segal_s_cohomology_groups}

\begin{tabsection}
 We shortly recall the definition of the cohomology groups due to Segal
 and Mitchison from \cite{Segal70Cohomology-of-topological-groups}.
 Moreover, we also establish the acyclicity of the soft modules from
 above for the globally continuous group cohomology and show
 $H^{n}_{\SM}(G,A')\cong H^{n}_{\globc}(G,A')$ for contractible $A'$.
 Consider the long exact sequence
 \begin{equation}\label{eqn:resolution_for_Segal_cohomology}
  A\to E_{G}A\to E_{G}(B_{G}A)\to E_{G}(B_{G}^{2}A)\to E_{G}(B_{G}^{3}A) \cdots.
 \end{equation}
 This serves as a resolution of $A$ for the invariants functor
 $A\mapsto A^{G}$ and the cohomology groups $H^{n}_{\SM}(G,A)$ are the
 cohomology groups of the complex
 \begin{equation}\label{eqn:complex_for_Segal_cohomology}
  (E_{G}A)^{G}\to (E_{G}(B_{G}A))^{G}\to(E_{G}(B_{G}^{2}A))^{G}\to
  (E_{G}(B_{G}^{3}A))^{G}\cdots .
 \end{equation}
 
 We now make the following observations:
 \begin{enumerate}
        \renewcommand{\labelenumi}{\theenumi}
        \renewcommand{\theenumi}{\arabic{enumi}.}
  \item \label{item:soft_explanation1} \cite[Ex.\
        2.4]{Segal70Cohomology-of-topological-groups} For an arbitrary
        short exact sequence $\kC(G,A)\to B\to C$, the sequence
        \begin{equation*}
         \kC(G,A)^{G}\to B^{G}\to C^{G}
        \end{equation*}
        is exact, i.e., $B^{G}\to C^{G}$ is surjective. Indeed, for
        $y\in C^{G}$ choose an inverse image $x\in B$ and observe that
        $g.x-x$ may be interpreted as an element of $\kC(G,A)$ for each
        $g\in G$. If we define
        \begin{equation*}
         \psi(g,h):=(g.x-x)(h)\quad\text{ and }\quad
         \xi(h):=h.\psi(h^{-1},e)\footnote{Note that the leading $h$ is missing in \cite[Ex.\
         2.4]{Segal70Cohomology-of-topological-groups}.},
        \end{equation*}
        then we have $g.\xi-\xi=g.x-x$ since
        \begin{align*}
         (g.\xi-\xi)(h)=&g.(\xi(g^{-1}h))-\xi(h)=
         h.(\psi(h^{-1}g,e))-h.\psi(h^{-1},e)\\
         =&h.((h^{-1}.g.x-x)(e)-(h^{-1}.x-x)(e))\\
         =&h.((h^{-1}.(g.x-x))(e))=(g.x-x)(h).
        \end{align*}
        Thus $x-\xi$ is $G$-invariant and maps to $y$.
  \item It is not necessary to work with the resolution
        \eqref{eqn:resolution_for_Segal_cohomology}, any resolution
        \begin{equation}\label{eqn:alternative_resolution_for_Segal_cohomology}
         A\to A_{0}\to A_{1}\to A_{2}\to \cdots 
        \end{equation}
        (i.e., a long exact sequence of abelian groups such that the
        constituting short exact sequences have local continuous
        sections) with $A_{i}$ of the form $\kC(G,A'_{i})$ for some
        contractible $A'_{i}$ would do the job. Indeed, then the
        double complex
        \begin{equation*}
         \xymatrix@=0.75em{
         \vdots  & \vdots & \vdots &
         \vdots \\
         E_{G}(B_{G}^{2}A) \ar[r]\ar[u] & E_{G}(B_{G}^{2} (\kC(G,A'_{0})))\ar[r] \ar[u]& E_{G}(B_{G} ^{2}(\kC(G^{2},A'_{1})))\ar[r]\ar[u] &
         E_{G}(B_{G}^{2}( \kC(G^{3},A'_{2})))\ar[r]\ar[u]&\cdots\\
         E_{G}(B_{G}A) \ar[r]\ar[u] & E_{G}(B_{G} (\kC(G,A'_{0})))\ar[r] \ar[u]& E_{G}(B_{G} (\kC(G^{2},A'_{1})))\ar[r]\ar[u] &
         E_{G}(B_{G}( \kC(G^{3},A'_{2})))\ar[r]\ar[u]&\cdots\\
         E_{G}(A) \ar[r]\ar[u] & E_{G}(\kC(G,A'_{0}))\ar[r] \ar[u]& E_{G}(\kC(G^{2},A'_{1}))\ar[r]\ar[u] &
         E_{G}(\kC(G^{3},A'_{2}))\ar[r]\ar[u]&\cdots\\
         A \ar[r]\ar[u] & \kC(G,A'_{0})\ar[r] \ar[u]& \kC(G^{2},A'_{1})\ar[r]\ar[u] &
         \kC(G^{3},A'_{2})\ar[r]\ar[u]&\cdots
         }
        \end{equation*}
        has exact rows and columns (cf.\ \cite[Prop.\
        2.2]{Segal70Cohomology-of-topological-groups}), which remain
        exact after applying the invariants functor to it by the
        observation from \ref{item:soft_explanation1} Thus the
        cohomology of the first row is that of the first column, showing
        that the cohomology of \ref{eqn:complex_for_Segal_cohomology} is
        the same as the cohomology of
        $A_{0}^{G}\to A_{1}^{G}\to A_{2}^{G}\to\cdots$.

        In particular, for contractible $A'$ we may replace
        \eqref{eqn:resolution_for_Segal_cohomology} in the definition of
        $H_{\SM}^{n}(G,A')$ by
        \begin{equation*}
         A'\to E'_{G}A\to {E'}_{G}(B'_{G}A')\to E'_{G}({B'}_{G}^{2}A)\to E'_{G}({B'}_{G}^{3}A') \cdots
        \end{equation*}
        with $E'_{G}(A'):=\kC(G,A')$ and $B'_{G}(A):=E'_{G}(A)/A$ (the
        occurrence of $E$ in the definition $E_{G}(A):=\kC(G,EA)$ only
        serves the purpose of making the target contractible).
  \item \label{item:soft_explanation2} Since $A'$ is assumed to be
        contractible, the short exact sequence
        $A'\to E'_{G}(A')\to B'_{G}(A')$ has a global continuous section
        \cite[App.\ B]{Segal70Cohomology-of-topological-groups}, and
        thus the sequence
        \begin{equation*}
         \kC(G,A')\to \kC(G,E'_{G}(A'))\to \kC(G,B'_{G}(A'))
        \end{equation*}
        is exact. In particular, the isomorphism
        $\kC(G,E'_{G}(A'))\cong E'_{G}(\kC(G,A'))$ shows that
        \begin{equation*}
         B'_{G}(\kC(G,A')):=E'_{G}(\kC(G,A'))/\kC(G,A')\cong \kC(G,E'_{G}(A'))/\kC(G,A')\cong \kC(G,B'_{G}(A'))
        \end{equation*}
        is again of the form $\kC(G,A'')$ with $A''$ contractible.
 \end{enumerate}
 These observations, together with an inductive argument, imply that the
 sequence
 \begin{equation*}
  A^{G}\to (E'_{G}A)^{G}\to (E'_{G}(B_{G}A))^{G}\to(E_{G}({B'}_{G}^{2}A))^{G}\to
  (E'_{G}({B'}_{G}^{3}A))^{G}\cdots 
 \end{equation*}
 is exact for $A=\kC(G,A')$ and contractible $A'$, and finally that
 $H^{n}_{\SM}(G,A)$ vanishes for $n\geq 1$.  What also follows is that
 for contractible $A'$, we have
 $H^{n}_{\SM}(G,A')\cong H^{n}_{\globc}(G,A')$ (cf.\ \cite[Prop.\
 3.1]{Segal70Cohomology-of-topological-groups}). Indeed,
 $\kC(G^{k},A')\cong \kC(G,\kC(G^{k-1},A'))$ and thus
 \begin{equation*}
  A' \to \kC(G,A')\to \kC(G^{2},A')\to \kC(G^{3},A')\to \cdots
 \end{equation*}
 serves as a resolution of the form
 \eqref{eqn:alternative_resolution_for_Segal_cohomology}. Dropping $A'$
 and applying the invariants functor to it then gives the (homogeneous
 version of) the complex $C^{n}_{\globc}(G,A')$.
\end{tabsection}

\bibliographystyle{new} \def\polhk#1{\setbox0=\hbox{#1}{\ooalign{\hidewidth
  \lower1.5ex\hbox{`}\hidewidth\crcr\unhbox0}}}

\end{document}